\newtheorem{thm}{Theorem}[section]
\newtheorem{lem}[thm]{Lemma}
\newtheorem{prp}[thm]{Proposition}
\newtheorem{cor}[thm]{Corollary}
\newtheorem{dfn}[thm]{Definition}
\theoremstyle{definition}
\newtheorem{example}[thm]{Example}
\newtheorem{remark}[thm]{Remark}
\theoremstyle{plain}
\newcommand{\rem}[1]{}
\newcommand{\C}{\mathbb{C}}
\newcommand{\F}{\mathbb{F}}
\newcommand{\N}{\mathbb{N}}
\newcommand{\Q}{\mathbb{Q}}
\newcommand{\R}{\mathbb{R}}
\newcommand{\Z}{\mathbb{Z}}
\newcommand{\HH}{{\mathrm{H}}}
\newcommand{\frakp}{{\mathfrak{p}}}
\newcommand{\calM}{{\mathcal{M}}}
\newcommand{\calO}{{\mathcal{O}}}
\newcommand{\calU}{{\mathcal{U}}}
\newcommand{\bbA}{{\mathbb{A}}}
\newcommand{\bbP}{{\mathbb{P}}}
\newcommand{\catC}{{\mathscr{C}}}
\newcommand{\vphi}{\varphi}
\newcommand{\idealof}{\unlhd} 
\newcommand{\embeds}{\hookrightarrow}
\newcommand{\xonto}[2][]{%
  \xrightarrow[#1]{#2}\mathrel{\mkern-14mu}\rightarrow
}
\newcommand{\suchthat}{\,:\,}
\newcommand{\where}{\,|\,}
\newcommand{\quo}[1]{\overline{#1}}
\newcommand{\Trings}[1]{\left< #1 \right>}
\newcommand{\Floor}[1]{\left\lfloor {#1} \right\rfloor}
\DeclareMathOperator{\Char}{char} %
\DeclareMathOperator{\Ext}{Ext} %
\DeclareMathOperator{\ind}{ind} %
\DeclareMathOperator{\Int}{Int} %
\DeclareMathOperator{\Nr}{Nr} %
\DeclareMathOperator{\Pic}{Pic} %
\DeclareMathOperator{\rad}{rad} %
\DeclareMathOperator{\rank}{rank}
\DeclareMathOperator{\Span}{span} %
\DeclareMathOperator{\Spec}{Spec} %
\newcommand{\nGL}[2]{\mathrm{GL}_{#2}({#1})}
\newcommand{\uGL}{{\mathbf{GL}}}
\newcommand{\uSL}{{\mathbf{SL}}}
\newcommand{\uGa}{{\mathbf{G}}_{\mathbf{a}}}
\newcommand{\uGm}{{\mathbf{G}}_{\mathbf{m}}}
\newcommand{\nGm}[1]{{\mathbf{G}}_{\mathbf{m},{#1}}}
\newcommand{\nGa}[1]{{\mathbf{G}}_{\mathbf{a},{#1}}}
\newcommand{\umu}{{\boldsymbol{\mu}}}
\newcommand{\fppf}{\mathrm{fppf}}
\DeclareMathOperator{\CH}{CH} 
\DeclareMathOperator{\spl}{spl} 
\DeclareMathOperator{\gen}{gen} 
\DeclareMathOperator{\ed}{ed} 
\newcommand{\Sch}{{\mathsf{Sch}}}
\newcommand{\ualpha}{{\boldsymbol{\alpha}}}
\newcommand{\algcl}[1]{\overline{#1}} 
\title[Torsors That Are Versal for All Affine Varieties]{Algebraic Groups with Torsors That Are Versal for All Affine Varieties}
\author{Uriya A.\ First$^1$}
\address{$^1$University of Haifa}
\email{uriya.first@gmail.com}
\author{Mathieu Florence$^2$}
\address{$^2$Université Paris Cité and Sorbonne Université, CNRS, IMJ-PRG, Paris}
\email{mflorence@imj-prg.fr}
\author{Zev Rosengarten$^3$}
\address{$^3$Hebrew University}
\email{zev.rosengarten@mail.huji.ac.il}
\begin{document}

\maketitle


\begin{abstract} 
Let $k$ be a field and let $G$ be an affine algebraic group over $k$.
Call a  $G$-torsor   \emph{weakly versal} for a class of $k$-schemes $\catC$
if it specializes to every $G$-torsor over a scheme in $\catC$.
A recent result of the first author, Reichstein and Williams says that for any $d\geq 0$, there exists
a $G$-torsor  over a finite type $k$-scheme that is weakly versal for   finite type affine $k$-schemes
of dimension at most $d$.
The first author also observed that if $G$ is unipotent,
then $G$ admits a torsor over a finite type $k$-scheme that is weakly versal for \emph{all} affine $k$-schemes,
and that the converse holds if $\Char k=0$. In this work, we extend this  to all fields, showing
that $G$ is unipotent if and only if it admits a $G$-torsor over a quasi-compact base that is weakly versal for
all  finite type regular affine $k$-schemes.
Our proof is   characteristic-free  and it also gives rise to a  quantitative statement:
If $G$ is a non-unipotent subgroup of $\uGL_n$, then a $G$-torsor over a quasi-projective $k$-scheme
of dimension $d$ is \emph{not} weakly versal for finite type regular affine $k$-schemes of dimension $n(d+1)+2$. 
This means in particular that every
such $G$
admits a nontrivial  torsor over a  regular affine $(n+2)$-dimensional variety.
When $G$ contains a nontrivial torus, we  show that nontrivial torsors
already exist over $3$-dimensional smooth affine varieties (even when $G$ is \emph{special}), 
and this is optimal in general.

In the course of the proof, we show that for every $m,\ell\in\N\cup\{0\}$ with $\ell\neq 1$,
there exists a smooth affine $k$-scheme $X$ carrying an $\ell$-torsion line bundle that cannot be
generated by $m$ global sections. We moreover study the minimal possible dimension of such an $X$
and show that it is $m$, $m+1$ or $m+2$.
\end{abstract}

\section{Introduction}

Let $k$ be a field and let $G$ be an affine algebraic group over $k$.
Recall that a $G$-torsor $E\to X$, where $X$ is a $k$-scheme,
is said to be \emph{weakly versal} if $E\to X$ specializes to every
$G$-torsor over any $k$-field $K$. 
When   the joint image of
all such specializations $\Spec K\to X$ is   dense in $X$,
the torsor $E\to X$ is called \emph{versal};
see \cite{Duncan_2015_versality_of_alg_grp_actions}, for instance.
Versal torsors have many applications, notably
to study of essential dimension and the theory of cohomological invariants, e.g., see
\cite{Merkurjev_2013_ess_dim_survey} and \cite{Serre_2003_cohomological_invariants}. 
In
particular, the  essential dimension  of $G$, denoted $\ed_k(G)$,
is equal to the minimal possible dimension of a   $k$-variety that is the base of a versal
$G$-torsor.

Given a class of $k$-schemes $\catC$, let us
say that a $G$-torsor $E\to X$ is \emph{weakly versal for $\catC$}
if every $G$-torsor over a scheme in $\catC$ is a specialization of $E\to X$.
This definition first appeared in \cite[\S5]{First_2022_generators_of_alg_over_comm_ring},
where it was shown that if $k$ is infinite,
then for every $d\in \N$, there exists a $G$-torsor 
over a finite type $k$-scheme
that is weakly versal for the class of   finite type affine $k$-schemes of dimension at most
$d$.
Special examples of this phenomena were observed   earlier, e.g.,
\cite{Saltman_1982_generic_Galois_extensions} and
\cite[\S5.1]{Auel_2019_Azumaya_algebras_without_inv}. Weak versality for general 
classes of $k$-schemes, as well as    stronger variants of this property, were studied systematically by the first
author in \cite{First_2023_highly_versal_torsors}, and we refer to introduction of this source for a more
detailed history. 
By \cite[Thm.~8.2]{First_2023_highly_versal_torsors},
for every $d\in\N$ and every algebraic subgroup $G$ of $\uGL_n$, there is a $G$-torsor
over a quasi-projective
$k$-scheme of dimension $nd+n^2-\dim G$ 
that is    versal (in  the strongest  sense considered in \cite{First_2023_highly_versal_torsors}) for the class of  affine noetherian  
$k$-schemes of dimension at most $d$.

Perhaps more surprisingly, it was observed in 
\cite[Thm.~10.1]{First_2023_highly_versal_torsors}
that if $G$ is unipotent (see \cite[6.45, 14.5]{Milne_2017_algebraic_groups} for the definition), then there are $G$-torsors over   quasi-projective $k$-schemes
that are weakly versal for \emph{all} affine $k$-schemes.\footnote{
	Note that in characteristic $0$,
	every $G$-torsor over an affine scheme is trivial when $G$ is unipotent, but in positive
	characteristic, unipotent groups may have nontrivial torsors.}
The special case where $p:=\Char k>0$   and
$G$ is a finite constant $p$-group  goes back to Saltman \cite{Saltman_1982_generic_Galois_extensions}; see also \cite{Fleischmann_2011_non_linear_actions_of_p_groups}.
The first author also showed that the converse holds in characteristic $0$,
that is, an algebraic $k$-group admitting a $G$-torsor over a finite type $k$-scheme
that is versal for all affine $k$-schemes is unipotent
\cite[Thm.~10.6]{First_2023_highly_versal_torsors}. They asked whether
this also holds in positive characteristic. 
Our first main result answers this question in the affirmative.

\begin{thm}\label{TH:main}
	Let $G$ be an affine algebraic group over a field $k$.
	Then the following conditions are equivalent:
	\begin{enumerate}[label=(\alph*)]
		\item there exists a $G$-torsor over a quasi-compact $k$-scheme
		that is weakly versal for all finite type  regular affine $k$-schemes;
		\item there exists a $G$-torsor over a smooth quasi-projective $k$-scheme
		that is weakly versal for all   affine $k$-schemes;
		\item $G$ is unipotent.
	\end{enumerate}
\end{thm}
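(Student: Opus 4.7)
The implication (c) $\Rightarrow$ (b) is already in place: it is \cite[Thm.~10.1]{First_2023_highly_versal_torsors_preprint} cited in the excerpt, which produces, for every unipotent $G$, a $G$-torsor over a smooth quasi-projective $k$-scheme weakly versal for all affine $k$-schemes. The implication (b) $\Rightarrow$ (a) is immediate, since smooth quasi-projective schemes are regular and quasi-compact. The main task is therefore to prove (a) $\Rightarrow$ (c), and we do so by contrapositive.

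Assume $G$ is affine algebraic but not unipotent over $k$, and let $\pi\colon E\to X$ be any $G$-torsor with $X$ quasi-compact. We will produce a $G$-torsor over a finite type regular (in fact smooth) affine $k$-scheme that is not a specialization of $\pi$. First, the non-unipotence of $G$ is used to extract an integer $\ell\geq 2$ together with a closed embedding $\mu_\ell\hookrightarrow G$, possibly after passing to a finite separable extension of $k$ and Weil-restricting back, which is compatible with the weak versality question. Second, fix an embedding $\iota\colon G\hookrightarrow \uGL_n$; then $\pi$ corresponds to a rank-$n$ vector bundle $V$ on $X$. Because $X$ is quasi-compact, $V$ is generated by some bounded number $m$ of global sections (taking $m=n(d+1)$ via Forster-type bounds when $X$ is quasi-projective of dimension $d$ gives the quantitative version in the abstract), and for every specialization $f\colon Y\to X$ the pullback bundle $f^*V$ is generated by at most $m$ sections.

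The crux of the argument is the technical result highlighted in the abstract: for every $m\geq 0$ and every $\ell\neq 1$, there exists a smooth affine $k$-scheme $Y$ of dimension at most $m+2$ carrying an $\ell$-torsion line bundle $L$ which cannot be generated by $m$ global sections. Granted this, one views $L$ as a $\mu_\ell$-torsor $T\to Y$ and inflates along $\mu_\ell\hookrightarrow G\hookrightarrow \uGL_n$ to obtain a rank-$n$ vector bundle $W$ on $Y$. Over a separable closure the composition $\mu_\ell\hookrightarrow \uGL_n$ splits as a sum of characters, so $W$ geometrically decomposes as a direct sum of line bundles of the form $L^{\otimes a_i}$; thus at least one of these summands requires more than $m$ generators, which forces the same for $W$ itself. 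This contradicts the generation bound of the previous paragraph, so the induced $G$-torsor over $Y$ cannot be a specialization of $\pi$, completing (a) $\Rightarrow$ (c).

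The main obstacle is the key technical lemma on the existence of smooth affine varieties carrying $\ell$-torsion line bundles of arbitrarily high generator-complexity, together with the optimal dimension bound $m+2$; this is the construction highlighted in the abstract and presumably occupies the bulk of the paper, likely via complements of carefully chosen divisors in projective varieties with prescribed $\ell$-torsion in their Picard groups, combined with a Forster/Swan-style section-counting argument. A secondary but nontrivial point is the initial reduction to a $\mu_\ell$ subgroup of $G$, which requires care in positive characteristic, especially for groups such as $\mu_p$ where the relevant subgroup is infinitesimal, and for non-smooth or non-connected $G$.
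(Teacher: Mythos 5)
Your overall architecture matches the paper's (extract $\umu_\ell\hookrightarrow G_K$ from non-unipotence, embed $G\hookrightarrow\uGL_n$, inflate an $\ell$-torsion line bundle needing many generators, and contradict a uniform generation bound coming from the base $X$), but there is a genuine gap at the step that produces that uniform bound. You claim that ``because $X$ is quasi-compact, $V$ is generated by some bounded number $m$ of global sections,'' and then pull these sections back along any specialization $f\colon Y\to X$. This is false: a vector bundle on a quasi-compact (or even quasi-projective) non-affine scheme need not be globally generated at all --- e.g.\ a negative line bundle on a projective curve has no nonzero sections --- and Forster's theorem only bounds generators of modules over noetherian (affine) rings, so it cannot be invoked on $X$ itself. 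The paper's mechanism is different: it introduces the splitting number $\spl(V)$, the least number of opens trivializing $V$, which \emph{is} finite by quasi-compactness and satisfies $\spl(f^*V)\leq\spl(V)$; it then proves (Proposition~\ref{PR:generators-vs-splitting-number}) that over an \emph{affine} scheme $\gen(W)\leq n\,\spl(W)$, and, for the quantitative statement, that $\spl(V)\leq\dim X+1$ when $X$ is quasi-projective (Proposition~\ref{PR:bound-on-splitting-number}). Without this (or an equivalent substitute) your contradiction never gets off the ground, since the bound on $\gen(f^*V)$ has no source.

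Two secondary points. First, you assert that a finite \emph{separable} extension of $k$ suffices to realize $\umu_\ell\hookrightarrow G_K$, ``Weil-restricting back''; in positive characteristic with non-smooth $G$ the field $K$ furnished by Theorem~\ref{unipiffnomultsubgp} may be inseparable, and no Weil restriction is needed or used --- the paper simply base-changes to $K$ and observes that a smooth affine $K$-scheme is a regular finite type affine $k$-scheme, which is exactly why condition (a) is stated with ``regular'' rather than ``smooth.'' Second, your claim that ``at least one of the summands $L^{\otimes a_i}$ requires more than $m$ generators'' is not automatic: one only knows $\gen(L)>m$, not $\gen(L^{\otimes a_i})>m$. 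The paper arranges this by composing the embedding $\umu_\ell\to G$ with $x\mapsto x^b$, $b$ an inverse of some nonzero weight mod $\ell$, so that one weight equals $1$ and $L$ itself is a quotient of $W$; some such normalization must be made explicit.
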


Our proof is very different from the argument in \cite{First_2023_highly_versal_torsors}.
It works in any characteristic, applies to torsors over a quasi-compact base (rather than
of finite type over $k$), and it also yields  a quantitative variant
for $G$-torsors over quasi-projective $k$-schemes, which is our second main result.

\begin{thm}\label{TH:main-quantitative}
	Let $k$ be a field, and let $G$ be a non-unipotent algebraic subgroup of the algebraic $k$-group $\uGL_n$. 
	Let $E\to X$ be a $G$-torsor such that $X$ is quasi-projective over a $k$-field
	and  
	of dimension $d$. Then $E\to X$ is \emph{not} weakly versal
	for the class of finite type regular affine $k$-schemes of dimension at most $n(d+1)+2$.
\end{thm}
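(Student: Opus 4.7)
The plan is to leverage the fact that any non-unipotent $G$ contains a nontrivial subgroup of multiplicative type, realize torsion line bundles as direct summands of vector bundles associated to $G$-torsors, and apply the ``bad torsion line bundle'' construction promised in the abstract to produce a $G$-torsor on a regular affine $k$-scheme of controlled dimension that cannot be specialized from $E\to X$.

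First, using structure theory, since $G$ is non-unipotent it contains a nontrivial closed $k$-subgroup $H$ of multiplicative type, which after passing to a geometrically simple constituent we may take to be $\umu_\ell$ for some integer $\ell\geq 2$ or $\uGm$ (encoded by $\ell=0$ in the abstract's notation). The composition $H\hookrightarrow G\hookrightarrow\uGL_n$ makes $k^n$ an $H$-module, whose weight decomposition $k^n=\bigoplus_i V_0^{(a_i)}$ must contain some nonzero weight $a_1$, since $H$ acts nontrivially. For any $\ell$-torsion line bundle $L$ on a $k$-scheme $Y$, the associated $H$-torsor pushes forward along $H\hookrightarrow G$ to a $G$-torsor $E_Y\to Y$ whose associated rank-$n$ vector bundle equals $V(E_Y)=\bigoplus_i(L^{\otimes a_i})^{\oplus \dim V_0^{(a_i)}}$, and in particular has $L^{\otimes a_1}$ as a direct summand.

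Next I would prove a generation bound for the given torsor: for $X$ quasi-projective of dimension $d$, the rank-$n$ vector bundle $V(E)$ should be generated by at most $n(d+1)$ global sections, possibly after tensoring with a line bundle on $X$ (a ``twisted'' generation bound). The strategy is to cover $X$ by $d+1$ affine opens obtained from complements of hyperplane sections of a projective closure, apply Forster--Swan on each patch to produce a small number of generating sections, and glue using sections of a sufficiently large power of an ample line bundle on $X$. Since generating sections pull back to generating sections, the same bound is inherited by $f^*V(E)$ for any morphism $f\colon Y\to X$.

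Finally, apply the abstract's theorem with $m=n(d+1)$ to obtain a smooth affine (hence regular) $k$-scheme $Y$ of dimension at most $n(d+1)+2$ carrying an $\ell$-torsion line bundle not generated by $n(d+1)$ sections. After replacing by an appropriate power (coprime to $\ell$ when $\ell>0$), arrange this line bundle to be $L^{\otimes a_1}$ for some $\ell$-torsion $L$ on $Y$. Constructing $E_Y\to Y$ from $L$ as in the second step, a weak specialization $E_Y\cong f^*E$ along some $f\colon Y\to X$ would force $V(E_Y)\cong f^*V(E)$ to be generated by $n(d+1)$ sections (up to the twist), and hence its direct summand $L^{\otimes a_1}$ as well --- contradicting our choice of $L$. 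The main obstacle will be the generation bound in the preceding step: coherent sheaves on projective varieties (e.g.\ $\mathcal{O}_{\mathbb{P}^d}(m)$ for large $m$) can require arbitrarily many global sections to generate, so the right invariant is necessarily a twisted version preserved under arbitrary pullbacks, and its precise formulation must be compatible with the twisting arithmetic used in the abstract's line bundle construction.
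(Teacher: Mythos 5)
There is a genuine gap, and it sits exactly at the step you yourself flag as ``the main obstacle'': the pullback-stable generation bound is never established, and the twisted version you propose does not close the final contradiction. If you only prove that $V(E)\otimes A$ is generated by $n(d+1)$ global sections for some ample (or otherwise auxiliary) line bundle $A$ on $X$, then a hypothetical specialization $f\colon Y\to X$ only tells you that $L^{\otimes a_1}\otimes f^*A$ is a quotient of a bundle generated by $n(d+1)$ sections. The twist $f^*A$ is an arbitrary, completely uncontrolled line bundle on $Y$ (it depends on the unknown $f$), and twisting by it can destroy any lower bound on the number of generators --- in the extreme case $f^*A$ could be inverse to $L^{\otimes a_1}$ in $\Pic Y$, making the relevant summand trivial. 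Since your ``bad'' torsion line bundle on $Y$ must be chosen \emph{before} $f$ exists, you cannot arrange compatibility with the twist, so no contradiction follows as written. The paper's way around this is to replace twisted global generation by an invariant that needs no twist and is monotone under arbitrary pullback: the splitting number $\spl(V)$, the minimal number of opens in a cover trivializing $V$. One shows $\spl(V)\leq \dim X+1$ for $X$ quasi-projective (Proposition~\ref{PR:bound-on-splitting-number}) and $\gen(W)\leq n\,\spl(W)$ for any rank-$n$ bundle $W$ on an \emph{affine} scheme (Proposition~\ref{PR:generators-vs-splitting-number}); since $\spl(f^*V)\leq\spl(V)$, any specialization forces $\gen(L^{\otimes a_1})\leq n(d+1)$ with no twist at all, which is the clean contradiction with $\gen>n(d+1)$ (Theorems~\ref{TH:reduction-to-line-bundles} and~\ref{TH:explicit-torsor}).

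A second, smaller but real inaccuracy is your opening structural claim: a non-unipotent $G$ need \emph{not} contain a nontrivial multiplicative $k$-subgroup. Over an imperfect field there are non-split extensions of $\umu_\ell$ by $\ualpha_p$ which are not unipotent yet have no nontrivial multiplicative subgroup (Remark~3.7 in the paper); the correct statement (Theorem~\ref{unipiffnomultsubgp}) holds over $\algcl{k}$ or for connected $G$. One must therefore pass to a finite extension $K/k$ with $\umu_{\ell,K}\hookrightarrow G_K$, run the whole construction over $K$, and view the resulting smooth affine $K$-scheme as a $k$-scheme; it is then of finite type, regular and affine over $k$, but possibly not smooth when $K/k$ is inseparable --- which is precisely why the theorem is stated for ``finite type regular'' rather than ``smooth'' $k$-schemes. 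Finally, your normalization ``replace by an appropriate power'' to land on the summand $L^{\otimes a_1}$ requires $\gcd(a_1,\ell)=1$ (arranged by taking $\ell$ prime, as the paper does), and the $\uGm$ branch you include cannot be handled by that trick alone; the paper treats it separately via a gcd argument with Swan's lemma, and it is not needed for this theorem.
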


Both theorems admit mild improvements when some assumptions are imposed on $G$ or $k$;
see Section~\ref{sec:main-proof}.
For example, if $G$ is connected  or $k$ is perfect, then we may replace ``finite type regular''
in condition (a) of Theorem~\ref{TH:main} and in Theorem~\ref{TH:main-quantitative}
with ``smooth''.
Furthermore, if $G$ contains a nontrivial torus, then
we can   assert in Theorem~\ref{TH:main-quantitative} that $E\to X$
is not versal for  smooth    affine $k$-schemes of dimension $\leq  n(d+1)+1$.
The strongest statements we can make about particular $G$ and $k$
are given in Theorems~\ref{TH:explicit-torsor} and~\ref{TH:explicit-torsor-torus}.

\medskip

Theorem~\ref{TH:main-quantitative}
implies that every non-unipotent 
algebraic subgroup of $\uGL_n$
admits a nontrivial $G$-torsor
over a finite type regular affine $k$-scheme
of dimension $\leq n+2$; mild improvements
of this are given in Corollaries~\ref{CR:there-are-nontriv-torsors}
and~\ref{CR:nontrivial-torsor-after-ext}.
However, we can do significantly better when $G$ contains a nontrivial torus.

\begin{thm}\label{TH:non-triv-torsors-main}
	Let $G$ be an affine algebraic $k$-group containing a nontrivial torus (resp.\ central torus).
	Then there exists a smooth affine $k$-scheme $X$ of dimension $\leq 3$ (resp.\ $1$) carrying
	a nontrivial $G$-torsor. When $\Char k=0$ and $\mathrm{trdeg}_{\Q}k$ is infinite,
	we can take $X$ to be of  dimension $\leq 2$.
\end{thm}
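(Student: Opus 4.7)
The plan is to exhibit a nontrivial $G$-torsor on a smooth affine $X$ by pushing forward an $\ell$-torsion line bundle through an inclusion $\umu_\ell\hookrightarrow T\hookrightarrow G$ for a suitable $\ell\geq 2$. Such a $k$-embedding $\umu_\ell\hookrightarrow T$ always exists for $\ell=2$, since $T[2]$ is a twist of $\umu_2^{\mathrm{rk}\, T}$ but $\umu_2$ has no nontrivial twists; more generally, one may replace $\umu_\ell$ by the finite $k$-group scheme $T[\ell]$, which is nontrivial for every $\ell\geq 2$. For central $T$, the inclusion factors through $Z(G)$, so $H^1(X,\umu_\ell)\to H^1(X,G)$ becomes a homomorphism of abelian groups (pointed sets with group action).

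Given a $\umu_\ell$-torsor $P$ on $X$, with associated line bundle $L\in \Pic(X)[\ell]$ via Kummer, the push-forward $P\wedge^{\umu_\ell} G$ is trivial iff $P$ is a pullback $f^*[G\to G/\umu_\ell]$ for some $f:X\to G/\umu_\ell$, equivalently iff $L = f^*\bar\beta$ for the canonical class $\bar\beta\in\Pic(G/\umu_\ell)[\ell]$ of the $\umu_\ell$-torsor $G\to G/\umu_\ell$. I would argue that, for any faithful representation $G\hookrightarrow \uGL_n$, the decomposition of $k^n$ into $\umu_\ell$-eigenspaces yields, via the canonical trivialization $G\times^{\umu_\ell} k^n\cong (G/\umu_\ell)\times k^n$, an embedding of some tensor power $\bar\beta^{\otimes i}$ into $\calO_{G/\umu_\ell}^{\oplus n}$. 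Hence $\bar\beta^{\otimes i}$ is generated by $n$ global sections on $G/\umu_\ell$, and the same holds for any pullback $(f^*\bar\beta)^{\otimes i}$. Thus it suffices to produce a smooth affine $X$ of the desired dimension, carrying an $\ell$-torsion line bundle $L$ whose $i$-th power is not generated by $n$ global sections; this is exactly the content of the auxiliary result from the abstract, which yields such $X$ of dimension $\leq m+2$, with refinements to $m+1$ (in characteristic $0$ with $\mathrm{trdeg}_\Q k = \infty$) and to $m$ in favorable cases.

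For the central torus case, I would give a direct construction, bypassing the generation-by-sections step: take $X = E\setminus\{P_0\}$ where $E$ is an elliptic curve over $k$ with a rational $\ell$-torsion point (these exist over every field, e.g.\ by Hasse--Weil over finite fields and by taking quadratic twists over global fields, possibly after enlarging $\ell$), so that $\Pic(X) = \Pic^0(E)(k)$ has a nontrivial $\ell$-torsion element. Centrality of $\umu_\ell\subseteq Z(G)$ converts the push-forward into a group homomorphism, and one shows directly that the image of $(G/\umu_\ell)(X)$ in $\Pic(X)[\ell]$ is a proper subgroup. The main obstacle will be reconciling the $n$-dependence of the generation argument with the $n$-independent dimension bound $\dim X\leq 3$ (resp.\ $\leq 2$) for the general torus case: one must exploit a small-rank subtorus of $T$, or work with $T[\ell]$ directly and adapt the Kummer analysis, so that the pullback image is controlled by a generation bound independent of the embedding dimension of $G$.
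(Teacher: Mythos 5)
There is a genuine gap, and you have in fact pointed at it yourself: your entire mechanism (push forward a $\umu_\ell$-torsor through $\umu_\ell\hookrightarrow G\hookrightarrow \uGL_n$, observe that triviality of the pushforward forces the corresponding $\ell$-torsion line bundle, or a suitable power of it, to be generated by roughly $n$ global sections, and then invoke the existence of $\ell$-torsion line bundles needing more than $n$ generators) only produces a nontrivial $G$-torsor on a smooth affine scheme of dimension about $n+2$, where $n$ is the embedding dimension of $G$. That is exactly the paper's Corollary~\ref{CR:there-are-nontriv-torsors}, not Theorem~\ref{TH:non-triv-torsors-main}; the whole content of the theorem is the $n$-independent bound $\dim X\leq 3$ (resp.\ $\leq 2$, resp.\ $\leq 1$). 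Declaring the reconciliation of this $n$-dependence to be ``the main obstacle'' without resolving it leaves the statement unproved; ``exploiting a small-rank subtorus'' does not help, since the rank of the torus is irrelevant --- it is the number of generators bound, which scales with $n$, that kills the dimension bound.

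The missing idea is to abandon generation counts and detect nontriviality with low-degree Chern classes, using the full $\uGm$ rather than $\umu_\ell$. If $E\to X$ is a $\uGm$-torsor with line bundle $L$ and $a_1,\dots,a_n$ are the weights of $\uGm$ in the representation $\uGm\to G\to\uGL_n$, the induced $\uGL_n$-torsor corresponds to $V=L^{\otimes a_1}\oplus\dots\oplus L^{\otimes a_n}$, so $c_i(V)=\sigma_i(a)\,c_1(L)^i$ in $\CH^iX$. Since $a\neq 0$, the identity $\sigma_1(a)^2-2\sigma_2(a)=\sum_i a_i^2\neq 0$ forces $\sigma_1(a)\neq 0$ or $\sigma_2(a)\neq 0$; hence nontriviality of $V$ (and therefore of $E\times^{\uGm}G$, since it pushes forward to $V$) is witnessed by $c_1$ or $c_2$ alone, independently of $n$. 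One then only needs, for every integer $N$, a smooth affine $X$ of dimension $\leq m+1$ (with $m\in\{1,2\}$) carrying a line bundle with $N\cdot c_1(L)^m\neq 0$ --- supplied by complements of norm hypersurfaces of large prime degree (dimension $3$ for $m=2$), by punctured elliptic curves with a torsion point of large prime order (dimension $1$ for $m=1$), and, in characteristic $0$ with $\mathrm{trdeg}_\Q k$ infinite, by Bloch-type surfaces with non-torsion $c_1(L)^2$ (dimension $2$), using Murthy's torsion-freeness of $\CH^2$ over $\algcl{k}$. The central case is then handled not by your unproved claim that the image of $(G/\umu_\ell)(X)$ in $\Pic(X)[\ell]$ is proper, but by choosing the representation so that $\uGm$ lands in the center of $\uGL_n$ (take an eigenspace of a nontrivial character of the central $\uGm$ in a faithful representation); then all $a_i$ are equal and nonzero, so $\sigma_1(a)=na\neq 0$ and $m=1$ suffices, giving a curve. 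Your $\ell=2$ remark is also not needed: after a harmless finite separable base change splitting the torus one has $\uGm\subseteq G$ on the nose, and the resulting $K$-scheme is still smooth affine over $k$.
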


Theorem~\ref{TH:non-triv-torsors-main} is the best result
we can hope for with general $k$ and $G$.
Indeed, we show  that
any $\uSL_n$-torsor ($n>1$) over  a noetherian affine scheme of dimension smaller than $2$
is trivial (Example~\ref{EX:nontrivial-torsor-lower-bound}),
and 
moreover, every $\uSL_n$-torsor over a smooth affine $\quo{\F}_p$-scheme
of dimension smaller then $3$ is trivial (Corollary~\ref{CR:triviality-of-tors-dim-2}). 
The characteristic-$0$ analogue of the latter,
i.e., that every
$\uSL_n$-torsor over a smooth affine $\algcl{\Q}$-scheme  of dimension $\leq 2$
is trivial,
is equivalent to the Bloch--Beilinson conjecture for surfaces (Remark~\ref{RM:triviality-of-tors-dim-2}).

The ``local'' counterpart of Theorem~\ref{TH:non-triv-torsors-main}, i.e.,
the problem of finding nontrivial torsors over $k$-fields,  was already 
considered
by Reichstein and Youssin \cite{Reichstein_2001_property_special_groups}. 
To put their result in context, recall that Serre made two conjectures
about the triviality of torsors over fields of small cohomological dimension
\cite{Serre_1962_Galois_coh_alg_grps},
\cite[III.\S3]{Serre_2002_Galois_cohomology_english}.
Serre's Conjecture I --- now a theorem of Steinberg 
\cite{Steinberg_1965_regular_elements_ss_grps} --- states
that if $G$ is a connected algebraic group over a field $K$ of cohomological dimension $\leq 1$, then every $G$-torsor over $K$ is trivial.
Serre's Conjecture II 
states that if $G$ is a   semisimple\footnote{
	We follow the convention that reductive groups, and in particular semisimple groups, 
	are connected.
} simply connected 
algebraic group over a perfect field $K$ of cohomological dimension $\leq 2$,
then every $G$-torsor over $K$ is trivial; this conjecture is known in many cases
\cite{Gille_2010_Serre_conj_II}.
Serre further called an algebraic group $G$ over $k$ \emph{special} 
if every $G$-torsor over a $k$-field is trivial \cite{Serre_1958_espaces_fibres_algebriques}. The main result
of  \cite{Reichstein_2001_property_special_groups}, which we state here only
in characteristic $0$ for simplicity, informally
says
that Serre's conjectures I and II are optimal.
Formally, suppose that $G$ is an algebraic group over an algebraically closed field $k$ (with $\Char k = 0$). Then:
\begin{enumerate}[label=(\roman*)]
	\item If $G$ is not connected, then $G$ has nontrivial
	torsors over  any $k$-field of transcendence degree $1$.
	\item If $G/\mathrm{rad} (G)$ is not semisimple simply connected,
	then $G$ has nontrivial
	torsors over  any $k$-field of transcendence degree $2$.
	\item If $G$ is not special, then $G$ has nontrivial
	torsors over  any $k$-field of transcendence degree $3$. 
\end{enumerate}
By spreading out, these statements
imply a result similar to Theorem~\ref{TH:non-triv-torsors-main},
but 
which applies only to non-special groups. 
In addition to addressing the case of special groups containing a nontrivial torus,
Theorem~\ref{TH:non-triv-torsors-main}   improves the  ``global''  consequences
of (i)--(iii) when
$G$ is  semisimple simply connected  and $\mathrm{trdeg}_{\Q}k$ is infinite by
showing that there exist nontrivial $G$-torsors
over \emph{$2$-dimensional} smooth affine $k$-schemes. (There are also additional
improvements in   positive characteristic.)
On the other hand, the global analogue
of (i) gives a better result than Theorem~\ref{TH:non-triv-torsors-main}
for non-connected (smooth) algebraic groups.

\medskip

We return to the case where $G$ is an affine algebraic group over some field $k$.
Another way to look at Theorem~\ref{TH:main-quantitative} is via the following corollary:

\begin{cor}\label{CR:essential-dim-bound}
	Let $G$ be an algebraic subgroup of the algebraic $k$-group $\uGL_n$,
	and let $m_d(G)$ be the smallest  $m\in\N\cup\{0\}$
	for which there is a $G$-torsor    over an $m$-dimensional
	quasi-projective $k$-scheme that is weakly versal for finite type affine $k$-schemes
	of dimension at most $  d$. If $G$ is not unipotent, then
	\[
	\Floor{\frac{d-2}{n}}   \leq m_d(G)\leq nd+n^2-\dim G.
	\]
	Otherwise,  
	\[
	m_d(G)\leq \frac{n(n-1)}{2}-\dim G.
	\]
\end{cor}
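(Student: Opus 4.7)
The plan is to handle the three inequalities separately: the two upper bounds come from explicit constructions, while the lower bound is essentially a contrapositive of Theorem~\ref{TH:main-quantitative}. For the upper bound $m_d(G) \leq nd + n^2 - \dim G$ in the non-unipotent case, I would simply invoke \cite[Thm.~8.2]{First_2023_highly_versal_torsors_preprint} (recalled in the introduction), which supplies a $G$-torsor over a quasi-projective $k$-scheme of dimension $nd + n^2 - \dim G$ that is strongly, and a fortiori weakly, versal for finite type affine $k$-schemes of dimension at most $d$.

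For the upper bound in the unipotent case, I would exhibit a versal torsor directly. The key observation is that any unipotent $k$-algebraic subgroup $G$ of $\uGL_n$ stabilizes a full $k$-rational flag in $k^n$: the fixed subspace $(k^n)^G$ is a nonzero $k$-subspace by definition of unipotent, and one iterates on the quotient to build the flag over $k$. Conjugating by a $k$-point of $\uGL_n$, we may therefore assume $G \subseteq U$, where $U$ denotes the upper triangular unipotent subgroup, of dimension $\tfrac{n(n-1)}{2}$. The fppf quotient $U/G$ is represented by a smooth affine $k$-scheme of dimension $\tfrac{n(n-1)}{2} - \dim G$ (standard for quotients of split unipotent groups), and $U \to U/G$ is a $G$-torsor. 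To verify weak versality over any affine $k$-scheme $Y$, take an arbitrary $G$-torsor $E \to Y$ and form the contracted $U$-torsor $U \times^G E \to Y$; since $U$ has a composition series with successive central quotients isomorphic to $\uGa$ and $\HH^1(Y, \uGa) = 0$ on affine $Y$, this $U$-torsor is trivial. Any section, composed with the canonical projection $U \times^G E \to U/G$, is then a classifying map $Y \to U/G$ pulling the tautological torsor back to $E \to Y$.

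For the lower bound in the non-unipotent case, suppose $m := m_d(G) < \infty$, realized by a $G$-torsor $E \to X$ with $X$ an $m$-dimensional quasi-projective $k$-scheme. Theorem~\ref{TH:main-quantitative} applied to $E \to X$ asserts that this torsor fails to be weakly versal for the class of finite type regular affine $k$-schemes of dimension at most $n(m+1) + 2$. Since those are in particular finite type affine $k$-schemes, weak versality in dimension $\leq d$ forces $n(m+1) + 2 > d$, whence $n(m+1) \geq d - 1$. As $m+1$ is an integer, this gives $m + 1 \geq \lceil (d-1)/n \rceil$, and the identity $\lceil (d-1)/n \rceil = \lfloor (d-2)/n \rfloor + 1$ yields $m \geq \Floor{(d-2)/n}$. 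The essential content is already packaged into Theorem~\ref{TH:main-quantitative}; given that result the corollary is bookkeeping, with the only potential subtlety being the affineness and dimension count for $U/G$ in the unipotent case.
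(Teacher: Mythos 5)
Your proof is correct and follows essentially the same route as the paper: the lower bound is exactly the paper's application of Theorem~\ref{TH:main-quantitative} to a torsor realizing $m_d(G)$ (and your floor/ceiling bookkeeping is right), while the non-unipotent upper bound is the same citation of \cite[Thm.~8.2]{First_2023_highly_versal_torsors_preprint}. The only difference is that for the unipotent upper bound the paper simply cites \cite[Thm.~10.1]{First_2023_highly_versal_torsors_preprint}, whereas you re-derive that statement via the standard construction $G\subseteq U\subseteq \uGL_n$ with $U$ the upper-triangular unipotent group and the torsor $U\to U/G$ (triviality of $U$-torsors over affine bases plus the exact sequence $(U/G)(Y)\to \HH^1_\fppf(Y,G)\to \HH^1_\fppf(Y,U)$), which is a correct, self-contained version of the same argument.
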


\begin{proof}
	The lower bound follows from Theorem~\ref{TH:main-quantitative}. 
	The upper bounds follow from \cite[Thms.~8.2, 10.1]{First_2023_highly_versal_torsors}
\end{proof}

When $d=0$, the integer $m_d(G)$ is just the essential dimension of $G$,\footnote{
	This is true despite the fact
	that Corollary~\ref{CR:essential-dim-bound} considers only   torsors 
	over quasi-projective $k$-schemes. Indeed, we clearly
	have $\ed_k(G)\leq m_0(G)$. Furthermore, if $p:E\to X$
	is a versal $G$-torsor, then there is a versal $G$-torsor
	$E'\to X'$ with $\dim X'=\dim X$ and $X'$ affine (and in particular quasi-projective)  
	--- simply take a dense open affine
	$X'\subseteq X$ and $E'=p^{-1}(X')$. Thus, $m_0(G)$ is smaller than $\min\{\dim X\where
	\text{$E\to X$ is a versal $G$-torsor}\}$,
	which is $\ed_k(G)$.
} so 
the invariants $\{m_d(G)\}_{d\geq 0}$ may be regarded as ``higher'' analogues of the essential dimension.
Corollary~\ref{CR:essential-dim-bound} tells us that
the sequence $\{m_d(G)\}_{d\geq 0}$ is either bounded --- which is the case precisely
when $G$ is unipotent ---
or it has linear growth rate.

\medskip

We prove Theorems~\ref{TH:main} and~\ref{TH:main-quantitative}
by establishing three results. 

First, we show in Theorem~\ref{unipiffnomultsubgp} that an affine algebraic $k$-group 
$G$ is not unipotent if and only if $G_{\algcl{k}}$ 
contains a copy of $\umu_\ell$ (the group of $\ell$-th roots of unity)
for some $\ell>1$; this was known
if $G$ is smooth and connected \cite[Ch.\,IV, Cor.\,11.5(2)]{Borel_1991_Linear_Algebraic_Groups}. We
also show that a general  connected algebraic  $k$-group
(possibly not affine or not smooth) that is not unipotent
has a nontrivial multiplicative algebraic subgroup.

Next, we show in Theorem~\ref{TH:reduction-to-line-bundles} that if $G$ is linear and 
contains a copy of $\umu_\ell$,
and if there is a $G$-torsor over a quasi-compact scheme $X$ that is weakly versal
for a class $\catC$ of affine $k$-schemes, then there is a uniform bound
on the number of generators of every $\ell$-torsion line bundle over a scheme in $\catC$.
This upper bound is explicit if $X$ is quasi-projective over  a $k$-field.

Finally, we show in 
Theorem~\ref{TH:bounds-on-generators}
that for every $m,\ell\in\N\cup \{0\}$ with $\ell\neq 1$,
there exists a smooth affine $k$-scheme $X$ 
admitting an $\ell$-torsion line bundle that cannot be generated by $m$ global sections;
such examples were known for $\ell=2$  if $\Char k\neq 2$ \cite[Prop.~7.13, Prop.~6.3]{Shukla_2021_classifying_spaces_for_etale_algebras}.
More generally, we study what is the minimal possible dimension of $X$ (also in the non-torsion
case $\ell=0$) and show that it is $m$, $m+1$ or $m+2$.
A key results which we establish along the way
is the following criterion  (Lemma~\ref{LM:power-of-c1}):
If the $m$-th power of the $1$-st Chern class of a line bundle $L$ is nonzero,
then $L$
cannot be generated by
$m$ global sections. 

Putting Theorems~\ref{unipiffnomultsubgp},
\ref{TH:reduction-to-line-bundles} and~\ref{TH:bounds-on-generators}
together tells us (after a little work) that if $G$ is not unipotent, then
no  $G$-torsor over a quasi-compact base
$E\to X$ is weakly versal for all finite type regular affine $k$-schemes. This gives Theorem~\ref{TH:main}.
When $X$ is quasi-projective, 
we can moreover point to a particular $G$-torsor $E'\to X'$ that is not a specialization of $E\to X$ 
and thus derive the quantitative 
Theorem~\ref{TH:main-quantitative}.

Theorem~\ref{TH:non-triv-torsors-main} is not a corollary
of Theorem~\ref{TH:main-quantitative}, but its proof follows a simpler
version of the above  strategy.
Loosely speaking, we make use of the torus contained in $G$ rather than the copy of $\umu_\ell$.

\medskip

The paper is organized as follows. Section~\ref{sec:prelim} recalls necessary
facts about algebraic groups, torsors and vector bundles. 
Sections~\ref{sec:nonunip-grps} and \ref{sec:reduction}
are dedicated to proving Theorems~\ref{unipiffnomultsubgp} and \ref{TH:reduction-to-line-bundles},
respectively.
In Section~\ref{sec:torsion-line-bundles}, we give many examples
of both torsion and non-torsion line bundles over smooth
affine schemes requiring many global sections to generate,
thus proving Theorem~\ref{TH:bounds-on-generators}.
These results are used in Section~\ref{sec:main-proof} to prove 
Theorems~\ref{TH:main} and~\ref{TH:main-quantitative}.
Finally, Theorem~\ref{TH:non-triv-torsors-main} is proved in Section~\ref{sec:nontriv}.

\subsection*{Acknowledgements}

We are grateful to Z.\ Reichstein for comments and suggestions.
We also thank 
V.\ Srinivas for a correspondence concerning Section~\ref{sec:torsion-line-bundles}. Finally, we thank the anonymous referees for their comments.

The first named author is supported by ISF grant no.\ 721/24.
The third named author is supported by ISF grant no.\ 2083/24.

\section{Preliminaries}
\label{sec:prelim}

Throughout this work, $k$ denotes a field and $\algcl{k}$ is an algebraic closure of
$k$. If not indicated otherwise,
all schemes and group schemes are over $k$,   morphisms are $k$-morphisms, and products
of schemes are taken over $\Spec k$. By a $k$-variety we mean an integral separated  $k$-scheme
of finite type.
The $n$-dimensional affine and projective spaces
over $k$ are denoted $\bbA^n$ and $\bbP^n$, respectively.

A $k$-ring means a commutative (unital) $k$-algebra. 
A $k$-field is a field extension of $k$. If $A$ is a $k$-ring and $X$ is a $k$-scheme,
we write $X_A$ or $X\times_k A$ for $X\times_{\Spec k}\Spec A$   viewed as an $A$-scheme.
For example, $\bbP^n_A$ is the $n$-dimensional projective space over $\Spec A$.
Similar notation applies to $k$-morphisms.

A $k$-group is a group scheme over $k$.
An algebraic $k$-group or an algebraic group over $k$
means a $k$-group of finite type (possibly non-smooth).
Recall that an algebraic $k$-group  $G$ is affine if and only if it is linear.
The identity connected component of $G$ is denoted $G^\circ$ and its unipotent radical
is $\rad_u(G)$. By a \emph{multiplicative} algebraic group, we mean an algebraic group of multiplicative type.

As usual,
$\uGa$ is the additive group of $k$,
$\uGm$ is the multiplicative group of $k$, 
$\umu_\ell$ is the algebraic $k$-group of $\ell$-th roots of unity,
$\uGL_n$ is the algebraic $k$-group of invertible $n\times n$ matrices
and $\uSL_n$ is the algebraic $k$-group of $n\times n$ matrices of determinant $1$.
When $p=\Char k>0$, we also write $\ualpha_{p^r}$ for the subgroup of $\uGa$
cut by the equation $x^{p^r}=0$.
In accordance with our earlier conventions, 
given a $k$-field $K$, we  write $\nGa{K}$ for the algebraic $K$-group
$\uGa\times_k K$, and likewise for $\uGm$, $\umu_\ell$, etcetera.

We shall sometimes view $k$-schemes as sheaves on the site  of all $k$-schemes with the
fppf topology, denoted $(\Sch/k)_\fppf$. In particular, $k$-morphisms $f:X\to Y$
will sometimes be defined by specifying their action on sections, namely,
by specifying $f_S:X(S)\to Y(S)$ for every scheme $S$. 
When $G$ is a $k$-group  acting on a $k$-scheme $X$,
we will write $X/G$ to denote the sheafification of the presheaf $S\mapsto X(S)/G(S)$
on $(\Sch/k)_\fppf$. We shall treat $X/G$ as a $k$-scheme when it is represented by one.
For example, if $G$ is a subgroup of an algebraic  $k$-group $H$,
then $H/G$ is known to be a $k$-scheme, which is moreover quasi-projective when
$H$ is affine \cite[Thm.~5.28, \S7e, proof of Thm.~7.18]{Milne_2017_algebraic_groups}.

\medskip

Let $G$ be a $k$-group scheme. Our conventions regarding $G$-torsors are the same
as in \cite[\S1]{First_2023_highly_versal_torsors}. That is,
a $G$-torsor  over a $k$-scheme $X$ consists of a sheaf
$E$ on $(\Sch/k)_\fppf$ together with a morphism
$E\to X$ and a \emph{right} $G$-action $E\times G\to E$
subject to the requirement that there is an fppf covering $X'\to X$
for which $X'\times_X E \cong X'\times_k G$ as schemes over $X'$
carrying a $G$-action.
Thus, $G$-torsors over $X$ are classified by the cohomology pointed set
$\HH^1_\fppf(X,G)$.
In general, not all $G$-torsors are represented by a $k$-scheme,
but this is true if $G$ is affine \cite[III, Thm.~4.3]{Milne_1980_etale_cohomology}.

Let $E\to X$ be a $G$-torsor and let $f:Y\to X$ be a morphism.
Then the first projection $ Y\times_X E\to Y$ is also a $G$-torsor;
we write $f^*E$ for $Y\times_X E$. We say that a $G$-torsor
$E\to X$ \emph{specializes} to another $G$-torsor $E'\to X'$
if there is a $k$-morphism $f:X'\to X$ such that $E'\cong f^*E$ as $G$-torsors
over $Y$. This is equivalent to the existence of a $G$-equivariant map $\hat{f}:E'\to E$.

Given a morphism of $k$-groups $\vphi:G\to H$
and a $G$-torsor
$p:E\to X$, one can form the $H$-torsor 
$$\vphi_*(p):E\times^GH\to X.$$ 
Here,
$E\times^G H$ is the quotient
of $E\times_k H$ by the equivalence relation $$(e,g,h)\mapsto ((eg,h),(e,\vphi(g)h)):
E\times  G\times  H\to (E\times  H)\times  (E\times H),$$
and $\vphi_*(p)$ is given by $\vphi_*(p)[e,h]=p(e)$ on sections.

\medskip

A vector bundle over a $k$-scheme $X$ is a locally free $\calO_X$-module $V$.
In this case, the functor $(f:Y\to X)\mapsto \Gamma(Y,f^*V)$ from $X$-schemes to sets
is represented by a scheme which we also denote by $V$.  (Here,
as usual,   $f^*V=f^{-1}V\otimes_{f^{-1}\calO_X} \calO_Y$.)

Recall that there is a bijection between isomorphism classes 
of rank-$n$ vector bundles on $X$ and isomorphism classes of $\uGL_n$-torsors over $X$. Moreover,
this equivalence is compatible with base-change.
Briefly, given a rank-$n$ vector bundle $V$, we can choose a Zariski covering $\{U_i\to X\}_{i\in I}$
and   isomorphisms $\{\vphi_i:\calO_{U_i}^n\to V_{U_i}\}_{i\in I}$. Setting $U_{ij}=U_i\cap  U_j$, 
this data gives rise to a $\uGL_n$-valued
$1$-cocycle on $X$, namely  $\alpha=\{\vphi_j^{-1}|_{U_{ij}}\circ\vphi_i|_{U_{ij}}\}_{i,j\in I}$,
and it defines the $\uGL_n$-torsor corresponding to $V$ (as well as its cohomology
class in $\HH^1_\fppf(X,\uGL_n)$).

A line bundle $L$ over $X$ is a rank-$1$ vector bundle over $X$. We say that $L$
is \emph{$\ell$-torsion} ($\ell\in \N\cup\{0\}$) if $L^{\otimes\ell}:=L\otimes_{\calO_X}\dots\otimes_{\calO_X} L$
($\ell$ times)
is isomorphic to $\calO_X$. 
Note that every line bundle is $0$-torsion.
By the previous paragraph, there is a one-to-one correspondence
between isomorphism classes of line bundles over $X$, 
isomorphism classes of $\uGm$-torsors over $X$  and $\HH^1_\fppf(X,\uGm)$.
For $\ell>0$, the Kummer short exact sequence $\umu_\ell \embeds \uGm\xonto{\ell} \uGm$  of sheaves on $(\Sch/k)_\fppf$
gives rise to a long cohomology exact sequence
\[
\dots \to \HH^1_\fppf(X,\umu_\ell)\to \HH^1_\fppf(X,\uGm)\xrightarrow{\ell}\HH^1_\fppf(X,\uGm)\to\dots ,
\]
from which we see that every $\umu_\ell$-torsor gives rise to an $\ell$-torsion line bundle,
and every $\ell$-torsion line bundle arises from some $\umu_\ell$-torsor (that is not unique
in general).
Explicitly, the line bundle arising from a $\umu_\ell$-torsor $T\to X$ corresponds
to the $\uGm$-torsor $T\times^{\umu_\ell} \uGm\to X$.

\section{A Characterization of Non-Unipotent Groups}
\label{sec:nonunip-grps}

Recall the following important result: Among \emph{smooth} connected linear algebraic groups over a field $k$,  unipotent groups are characterized by the property that they do not contain a nontrivial multiplicative subgroup. When $k$ is algebraically closed, this is \cite[Ch.\,IV, Cor.\,11.5(2)]{Borel_1991_Linear_Algebraic_Groups}, and over general fields it follows from Grothendieck's theorem on the existence of geometrically maximal tori (that is, tori defined over $k$ which are maximal over $\overline{k}$) \cite[Exp.\,XIV, Th.\,1.1]{SGAiii}. Our goal in this section is to prove an analogous result for all connected algebraic groups over general fields, and arbitrary algebraic groups over algebraically closed fields. The result for connected affine groups over algebraically closed fields is contained in SGA \cite[Exp.\,XVII, Prop.\,4.3.1, (i)$\iff$(iv)]{SGAiii}. In fact, the same proof works over perfect fields. The purpose of this section is to prove the same result without affineness or perfectness hypotheses.

We begin with the following lemma.

\begin{lem}
\label{extmultunip}
Over a field $k$, an extension of a nontrivial finite connected multiplicative algebraic $k$-group $M$ by a finite connected unipotent algebraic $k$-group $U$ contains a nontrivial multiplicative algebraic $k$-group.
\end{lem}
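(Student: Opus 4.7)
The plan is to induct on the order $|I|$, reducing the general case to the subcase where $I$ is commutative, and there invoking the splitting of extensions under a linearly reductive quotient.

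The hypotheses force $\Char k = p > 0$: in characteristic $0$ every unipotent algebraic group is smooth, so any finite connected unipotent group is trivial and the assertion is vacuous (take $G = M$). Henceforth $\Char k = p > 0$, so $I$, $M$, and $G$ are all infinitesimal $k$-group schemes; in particular every closed subgroup scheme of $G$ is itself infinitesimal, hence finite and connected. The induction is on $|I|$, with the base case $|I|=1$ immediate since then $G\cong M$ is itself the required subgroup.

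For the inductive step assume $|I|>1$ and distinguish two subcases. If $I$ is commutative, conjugation inside $G$ makes $I$ into an $M$-equivariant commutative group scheme (well defined since $I$ is abelian); since $M$ is of multiplicative type, it is linearly reductive over any field $k$ (no perfectness assumption on $k$ is needed), and Hochschild cohomology $H^i_{\mathrm{Hoch}}(M,-)$ vanishes for $i\ge 1$ on rational $M$-modules. Consequently the extension $1\to I\to G\to M\to 1$ splits, and any section $M\embeds G$ provides a nontrivial multiplicative subgroup. If instead $I$ is non-commutative, we exploit that unipotent groups are nilpotent: the scheme-theoretic center $Z(I)$ is a nontrivial, proper, characteristic subgroup of $I$, so it is normal in $G$, commutative, unipotent, and (by infinitesimality) connected. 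The quotient $G/Z(I)$ fits in an extension of $M$ by $I/Z(I)$ of strictly smaller order, so by the inductive hypothesis it contains a nontrivial multiplicative subgroup $\bar N$ (which is automatically infinitesimal, hence finite connected). The preimage $N\subseteq G$ of $\bar N$ sits in $1\to Z(I)\to N\to \bar N\to 1$ with $|Z(I)|<|I|$ and $Z(I)$ commutative, so applying the commutative subcase (equivalently, the inductive hypothesis) to $N$ yields a nontrivial multiplicative subgroup of $N$, and a fortiori of $G$.

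The main obstacle is justifying the splitting in the commutative subcase. This is a standard consequence of the linear reductivity of multiplicative-type groups and the concomitant vanishing of their Hochschild cohomology in positive degrees, valid over an arbitrary base field; the only delicate point, given that $k$ may be imperfect, is to insist on the characteristic-free version of linear reductivity for groups of multiplicative type rather than any structural decomposition of $I$ itself. Once the splitting is granted, the rest is a routine nilpotent-center induction.
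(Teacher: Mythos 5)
Your overall dévissage (induct on the order of $I$, split off the center when $I$ is noncommutative, reduce to commutative coefficients) is sound and parallels the reductions the paper itself performs, but the core step --- the claimed splitting of $1\to I\to G\to M\to 1$ when $I$ is commutative --- is where all the content of the lemma lies, and your justification of it has a genuine gap. Linear reductivity of $M$ gives vanishing of $H^{i}_{\mathrm{Hoch}}(M,V)$ for $i\ge 1$ when $V$ is a \emph{rational representation}, i.e.\ a vector group $\uGa^r$ with linear $M$-action. A finite connected commutative unipotent group $I$ (e.g.\ $\ualpha_p$, $\ualpha_{p^2}$, a truncated Witt group, or a form of these) is not a rational $M$-module, so the vanishing theorem you invoke does not apply to the coefficients you actually have; nor can one deduce it from the vector-group case via $0\to\ualpha_p\to\uGa\xrightarrow{F}\uGa\to 0$, since Frobenius is not surjective on coordinate rings and hence the Hochschild cochain complexes do not form a short exact sequence --- there is no long exact sequence to exploit.

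Worse, the statement your argument would establish is false: nothing in your commutative case uses connectedness of $M$, only that it is of multiplicative type, yet the remark following Theorem~\ref{unipiffnomultsubgp} records that over any imperfect $k$ of characteristic $p$ there are non-split extensions of $\umu_\ell$ ($\ell\neq p$) by $\ualpha_p$ which contain no nontrivial multiplicative subgroup at all; so $H^2$ of a multiplicative group with coefficients in $\ualpha_p$ (for a suitable twisted action) can be nonzero. Connectedness of $M$ (equivalently, that $M$ has $p$-power order) must therefore enter in an essential way, and this is exactly how the paper argues: it reduces to $M$ killed by $p$ and $I=\ualpha_p^r$, and then runs the argument of \cite[Cor.~15.33]{Milne_2017_algebraic_groups}, which combines the $M$-equivariant embedding $\ualpha_p^r\hookrightarrow\uGa^r$ with the splitting of extensions by vector groups \cite[Prop.~15.31]{Milne_2017_algebraic_groups} and the infinitesimality of $M$ to produce a multiplicative subgroup inside $G$ itself. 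Note also that Lemma~\ref{extmultunip} asserts only the existence of such a subgroup, not splitting of the extension, and the splitting statement the paper does prove (Lemma~\ref{extmultunipperf}) is asserted only over perfect fields --- a further sign that your unconditional splitting claim cannot be had so cheaply. Your center/noncommutative reduction is fine, but without a correct treatment of the commutative case the proof does not go through.
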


\begin{proof}
We may assume that ${\rm{char}}(k) = p > 0$ (otherwise
$M$ is trivial), and that  $M$ is $p$-torsion  
\cite[Cor.~11.30]{Milne_2017_algebraic_groups}. 
By \cite[Exp.\,XVII, Prop.\,4.2.1,(i)$\iff$(iii)]{SGAiii}, we may also assume that $U = \ualpha_p^r$. The proof is now essentially identical to that given in \cite[Cor.\,15.33]{Milne_2017_algebraic_groups}, using \cite[Prop.\,15.31]{Milne_2017_algebraic_groups}.
\end{proof}

The key case, when $G$ is finite connected, is contained in the following lemma.

\begin{lem}
\label{unip=nomultsubgp}
Let $k$ be a field of characteristic $p>0$. A finite connected $k$-group scheme $I$ is unipotent if and only if it does not contain a nontrivial multiplicative subgroup.
\end{lem}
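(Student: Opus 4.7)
The forward implication ($I$ unipotent $\Rightarrow$ $I$ contains no nontrivial multiplicative subgroup) is standard, since any morphism from a group of multiplicative type into a unipotent group is trivial. For the converse I plan to induct on the order $|I|$, with a reduction to the commutative case via the center.

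For the \emph{commutative case}, let $I^D$ denote the Cartier dual of $I$. Applying Cartier duality to the connected--\'etale sequence of $I^D$ yields an exact sequence
\[
0 \to \pi_0(I^D)^D \to I \to ((I^D)^\circ)^D \to 0
\]
whose kernel $M := \pi_0(I^D)^D$ is multiplicative (being dual to an \'etale group scheme). I claim $M$ is the maximal multiplicative subgroup of $I$ over $k$: any multiplicative $M' \hookrightarrow I$ dualizes to an \'etale quotient $I^D \twoheadrightarrow (M')^D$, which must factor through $\pi_0(I^D)$, yielding $M' \subseteq M$ after dualizing back. The hypothesis then forces $M = 0$, i.e., $I^D$ is connected; since connectedness is preserved under base change, $I^D_{\algcl{k}}$ is connected, hence $I_{\algcl{k}}$ has no nontrivial multiplicative subgroup, and the already-known case over $\algcl{k}$ (e.g.\ \cite[Exp.\,XVII, Prop.\,4.3.1]{SGAiii}) gives that $I_{\algcl{k}}$ is unipotent. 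Since unipotence is a geometric property, $I$ itself is unipotent.

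For the \emph{noncommutative case}, let $Z := Z(I)$. This is a nontrivial proper commutative subgroup of $I$ (nontriviality of the center being the standard nilpotence of finite connected $k$-group schemes, e.g.\ \cite[Prop.\,15.31]{Milne_2017_algebraic_groups}). As a closed subscheme of the local scheme $I$, $Z$ is connected; it inherits the ``no nontrivial multiplicative subgroup'' hypothesis from $I$, so the commutative case gives that $Z$ is unipotent. I would next argue that $I/Z$ also satisfies the hypothesis: any nontrivial multiplicative subgroup $M \subseteq I/Z$ is connected (as $I/Z$ is local), and its preimage $\widetilde{M} \subseteq I$ fits in an exact sequence $1 \to Z \to \widetilde{M} \to M \to 1$ to which Lemma~\ref{extmultunip} applies, producing a nontrivial multiplicative subgroup of $\widetilde{M} \subseteq I$ --- contradicting the hypothesis on $I$. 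The inductive hypothesis then gives that $I/Z$ is unipotent, and closure of unipotence under extensions concludes. The main obstacle is the commutative case over a non-perfect field, where $I$ does not split into its multiplicative and unipotent parts; the Cartier duality argument circumvents this by producing the maximal multiplicative subgroup as a \emph{canonical} subgroup defined over $k$ whose triviality over $k$ is equivalent to triviality over $\algcl{k}$.
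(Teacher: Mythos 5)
The forward direction and your commutative case are sound; the commutative argument (Cartier duality forcing $I^D$ to be connected, then descending unipotence from $\algcl{k}$) is essentially the same as the paper's. The noncommutative step, however, rests on a false assertion: that the center $Z(I)$ of a nontrivial finite connected $k$-group scheme is nontrivial, which you attribute to a ``standard nilpotence'' of such groups. Finite connected group schemes in characteristic $p$ need not be nilpotent, nor even solvable, and their centers can vanish. For instance, $\ualpha_p\rtimes\umu_p$ (with $\umu_p$ acting on $\ualpha_p$ by scaling) is a connected group scheme of order $p^2$ with trivial scheme-theoretic center; likewise the first Frobenius kernel of $\uSL_2$ for $p>2$ has trivial center, since its Lie algebra $\mathfrak{sl}_2$ has trivial center. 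Milne's Prop.\,15.31 concerns extensions of multiplicative groups by $\uGa$ and gives nothing of this sort. Under the hypothesis of the lemma the center is indeed nontrivial --- but only because the lemma itself implies $I$ is unipotent, hence nilpotent --- so your reduction assumes, in disguise, essentially what is to be proved, and precisely in the hard case. (The examples above all contain $\umu_p$, so they are consistent with the lemma, but they show your intermediate claim has no independent standing.)

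For comparison, the paper's induction passes through an arbitrary proper nontrivial \emph{normal} subgroup when one exists (using Lemma~\ref{extmultunip} on the quotient exactly as you do), and the residual case is an $I$ with no proper nontrivial normal subgroup, which is then killed by Frobenius, i.e.\ of height one. The real content of the proof is the resulting $p$-Lie algebra statement: a nonabelian $p$-Lie algebra with no proper nonzero $p$-ideals contains a nonzero abelian $p$-subalgebra with surjective $p$-operation (equivalently, $I$ contains a nontrivial multiplicative subgroup). This is established via Engel's theorem together with a Jordan--Chevalley-type argument producing an element whose adjoint action is geometrically semisimple. Your proposal has no counterpart to this step, and since the ``nontrivial center'' claim cannot be repaired in general, the gap is genuine; contrary to your closing remark, the imperfect-field commutative case is the easy part, and the nonabelian height-one case is where the work lies.
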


\begin{proof}
The ``only if'' direction is trivial, so we concentrate on the converse. First assume that $I$ is commutative. The connected-\'etale sequence of $\widehat{I}$, the Cartier dual of $I$, then shows that $\widehat{I}$ must be connected (since $I$ does not contain a multiplicative group), hence $I$ is a commutative infinitesimal (that is, finite connected) $k$-group with infinitesimal dual. This implies that it is unipotent. (In fact, for this it is not hard to show that it is enough to have infinitesimal dual.) Indeed, by filtering we may suppose that $I$ and its dual are both killed by Frobenius, and Dieudonn\'e theory \cite[Th.\,1.4.3.2]{cco} then implies that, at least over the perfection of $k$, $I$ is isomorphic to a power of $\ualpha_p$, which then implies that it is unipotent; see \cite[Exp.\,XVII, Def.\,1.3]{SGAiii}.

We now prove the result in general by induction on the order of $I$. Note first that if one has an exact sequence
\[
1 \longrightarrow I' \longrightarrow I \longrightarrow I'' \longrightarrow 1
\]
with $I' \trianglelefteq I$ proper and nontrivial, then by induction $I'$ is unipotent. If $I''$ contained a nontrivial multiplicative subgroup scheme, then so would $I$ by Lemma \ref{extmultunip}. Therefore, $I''$ does not contain such a copy, so is also unipotent by induction, hence so is $I$. We are therefore free to assume that $I$ does not admit a normal nontrivial proper $k$-subgroup. In particular, we may assume that the Frobenius morphism of $I$ is trivial. That is, $I$ is a height one algebraic group. 

Thanks to the equivalence between height one  algebraic $k$-groups and $p$-Lie algebras \cite[Exp.\,VII$_A$, Thm.\,8.1.2]{SGAiii}, and the fact that we have already handled the commutative case, the problem translates into the following $p$-Lie algebra question: If $\mathfrak{g}$ is a nonabelian $p$-Lie algebra admitting no proper nonzero $p$-ideals (that is, ideals preserved by the $p$ operation), then $\mathfrak{g}$ contains a nontrivial abelian $p$-Lie subalgebra with surjective $p$ operation. Indeed, this follows from the fact that multiplicative groups have surjective $p$ operation (just check over an algebraic closure where it reduces to the case of $\umu_n$) while unipotent commutative groups of height one have nilpotent $p$ operation (as all such groups over an algebraic closure admit a filtration by either $\ualpha_p$ or $\Z/p\Z$), plus the fact that all commutative finite $k$-groups may be filtered by multiplicative and unipotent ones.

First assume that ${\rm{ad}}(X)$ is nilpotent for all $X \in \mathfrak{g}$. Then Engel's theorem implies that $\mathfrak{g}$ is a nilpotent Lie algebra. In particular, it has nontrivial center. Since ${\rm{ad}}(X^{[p]}) = {\rm{ad}}(X)^p$, the center is preserved by the $p$ operation. It follows from our assumption that $\mathfrak{g}$ has no proper nonzero ideals that $\mathfrak{g}$ is abelian, contrary to assumption.

Now assume that not all ${\rm{ad}}(X)$ are nilpotent. Let $X \in \mathfrak{g}$ be such that ${\rm{ad}}(X)$ is not nilpotent. If $f(T) \in k[T]$ is the minimal polynomial of ${\rm{ad}}(X)$, then $f$ is not a power of $T$. Furthermore, for some $r \geq 0$, $f(T) = g(T^{p^r})$ with $g \in k[T]$ separable. Because ${\rm{ad}}(X^{[p^r]}) = {\rm{ad}}(X)^{p^r}$, $g$ is the minimal polynomial of ${\rm{ad}}(X^{[p^r]})$. Therefore, replacing $X$ by $X^{[p^r]}$,  we may assume that ${\rm{ad}}(X)$ is non-nilpotent and that its minimal polynomial is separable over $k$. It follows that $X$ admits a Jordan-Chevalley decomposition $X = X_s + X_n$ with ${\rm{ad}}(X_s)$ geometrically semisimple, and ${\rm{ad}}(X_n)$ nilpotent. Replacing $X$ by $X_s$, therefore, we obtain nonzero $X \in \mathfrak{g}$ such that ${\rm{ad}}(X)$ is geometrically semisimple.

Consider the nonzero subspace $\mathfrak{h} := \langle X, X^{[p]}, X^{[p^2]}, \dots \rangle \subset \mathfrak{g}$. We claim that $\mathfrak{h}$ is abelian. Indeed, this follows from the fact that, for any $Y \in \mathfrak{g}$, ${\rm{ad}}(Y^{[p^n]}) = {\rm{ad}}(Y)^{p^n}$, hence $Y^{[p^n]}$ commutes with $Y$. This equation also shows that ${\rm{ad}}(Y) \in \mathfrak{gl}(\mathfrak{g})$ is geometrically semisimple for all $Y \in \mathfrak{h}$. Because $\mathfrak{h}$ is abelian, one has $(Y+Z)^{[p]} = Y^{[p]} + Z^{[p]}$ for any $Y, Z \in \mathfrak{h}$. Thus $\mathfrak{h} \subset \mathfrak{g}$ is a (nonzero) abelian $p$-Lie subalgebra of $\mathfrak{g}$ all of whose elements have geometrically semisimple adjoint action on $\mathfrak{g}$, and furthermore this property is preserved upon passage to $\overline{k}$.

We must show that the $p$ operation on $\mathfrak{h}$ is surjective, and it suffices to show this after extending scalars to $\algcl{k}$, where it is equivalent to injectivity, so we may assume that $k$ is algebraically closed. Suppose we are given $Y \in \mathfrak{h}$ such that $Y^{[p]} = 0$. We must show that $Y = 0$. One has ${\rm{ad}}(Y^{[p]}) = {\rm{ad}}(Y)^p = 0 \in \mathfrak{gl}(\mathfrak{g})$. Because ${\rm{ad}}(Y)$ is (geometrically) semisimple, it follows that ${\rm{ad}}(Y) = 0$. That is, $Y$ is central in $\mathfrak{g}$. Since $Y^{[p]} = 0$, it follows that $\langle Y\rangle$ is a $p$-ideal of $\mathfrak{g}$, a contradiction because we assumed that $\mathfrak{g}$ is nonabelian with no proper nonzero $p$-ideals. This contradiction shows that the $p$ operation on $\mathfrak{h}$ is injective, and completes the proof of the lemma.
\end{proof}

\begin{lem}
\label{extmultunipperf}
Let $k$ be a perfect field. Then any extension $E$ of a multiplicative \'etale algebraic $k$-group $M$ by a connected unipotent algebraic $k$-group $U$ splits.
\end{lem}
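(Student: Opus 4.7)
The plan is to split $E \to M$ by a dévissage on $U$ that reduces to the case of commutative $U$, and then to kill the classifying cohomology by a coprimality-of-orders argument.

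First I would reduce to the case where $U$ is commutative. Since $U$ is connected unipotent, it is nilpotent and its derived series $U \supseteq [U,U] \supseteq \dots$ terminates. The subgroup $[U,U]$ is characteristic in $U$, hence normal in $E$, and the quotient $U/[U,U]$ is commutative connected unipotent. Assuming the commutative case is settled, the induced extension
\[
1 \to U/[U,U] \to E/[U,U] \to M \to 1
\]
admits a section $s : M \to E/[U,U]$; pulling back along $\pi : E \to E/[U,U]$ gives an extension $1 \to [U,U] \to \pi^{-1}(s(M)) \to M \to 1$ with unipotent kernel of strictly smaller derived length. Induction on derived length (with trivial base case $U = 1$) then yields a section $M \to E$.

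Second I would handle the commutative case. Extensions of $M$ by a commutative $U$ inducing a given $M$-action on $U$ are classified by an algebraic-group cohomology group $H^2(M,U)$ (e.g.\ Hochschild cohomology, or equivalently $\mathrm{Ext}^1$ in commutative fppf $k$-group sheaves). Set $n := |M|$. Since $M$ is multiplicative étale, over $\algcl{k}$ it is a product of copies of $\umu_{n_i}$ that are each étale, forcing $\gcd(n,p) = 1$ when $p = \Char k > 0$. Using perfectness of $k$, a commutative connected unipotent $U$ admits a dévissage into copies of $\uGa$ in characteristic $0$, or into copies of $\uGa$ and $\ualpha_{p^r}$ in characteristic $p > 0$; in either case multiplication by $n$ is an automorphism of $U$. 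By the standard formalism of $\mathrm{Ext}$ between abelian objects, the endomorphism ``multiplication by $n$'' on either argument induces the same endomorphism of $H^2(M,U)$, namely multiplication by $n$; but $[n]_M = 0$ while $[n]_U$ is an automorphism, so this common endomorphism is simultaneously zero and invertible, forcing $H^2(M,U) = 0$.

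The main obstacle I anticipate is making the cohomological classification and the functoriality ``$[n]$-in-either-variable acts as multiplication by $n$'' rigorous in the required generality: the $M$-action on $U$ may be nontrivial, and $U$ may be non-smooth (e.g.\ $U = \ualpha_{p^r}$). The resolution is to work with Hochschild/Demazure--Gabriel cohomology of commutative affine $k$-groups, or equivalently sheaf cohomology of the classifying stack of $M$, both of which are functorial in both variables. Perfectness of $k$ enters precisely to guarantee the clean dévissage of $U$ into standard building blocks, and to ensure that the finite étale group $M$ is governed by its geometric character group with continuous Galois action; over imperfect fields, nonstandard unipotent forms could obstruct such a splitting.
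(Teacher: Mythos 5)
Your first step (dévissage along the derived series of $U$ to reduce to commutative $U$) is fine. The problems are in the commutative case, which is the heart of the lemma. A preliminary issue: identifying the extensions of $M$ by $U$ (with a fixed action) with a Hochschild group $\HH^2(M,U)$ presupposes that $E\to M$ admits a section as $k$-schemes, i.e.\ that the $U$-torsor $E$ over $M$ is trivial; Hochschild cohomology only classifies such ``scheme-theoretically split'' extensions. This is in fact true in our situation --- $M$ is a finite disjoint union of spectra of \emph{perfect} fields, and $\HH^1_\fppf$ of these with coefficients in a connected unipotent group vanishes by d\'evissage into $\uGa$'s and $\ualpha_p$'s --- but it has to be argued, and your proposed substitute ``$\Ext^1$ in commutative fppf $k$-group sheaves'' is not the right object: $E$ need not be commutative and the $M$-action on $U$ need not be trivial, so that $\Ext^1$ only sees commutative extensions with trivial action.

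The genuine gap is the key step ``multiplication by $n$ on either argument induces multiplication by $n$ on $\HH^2(M,U)$''. That is a property of biadditive $\Ext$-bifunctors, and $\HH^2(M,U)$ is not of that type in the variable $M$. If the action of $M$ on $U$ is nontrivial, pullback along $[n]_M$ does not even give an endomorphism of the same group: it lands in $\HH^2$ of $M$ acting through the precomposed action. Even with trivial action and $M$ commutative, pullback along $[n]_M$ is multiplication by $n$ only on the subgroup of commutative extension classes; on the complementary ``commutator-pairing'' part it is multiplication by $n^2$ (for abstract groups, $\HH^2(Q,A)\cong \Ext^1_{\Z}(Q,A)\oplus \Hom(\Lambda^2 Q,A)$ and $[n]_Q$ acts by $n^2$ on the second factor) --- and it is exactly this noncommutative part that your argument must kill. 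So the contradiction ``simultaneously zero and invertible'' is not established. The standard repair is the classical one: prove that $\HH^2(M,U)$ is annihilated by $n=|M|$ via corestriction/transfer or by averaging cochains --- the averaging is legitimate precisely because $[n]_U$ is an automorphism of $U$ ($U$ is $p$-power torsion in characteristic $p$ and uniquely divisible in characteristic $0$, while $\gcd(n,p)=1$) --- and then invertibility of $n$ on the coefficients gives the vanishing. Alternatively, one can avoid cohomology of $M$ altogether, as the paper does: over the perfect field $k$ one reduces by SGA3, Exp.\ XVII, Prop.\ 4.3.1 to $U$ a power of $\uGa$ or of $\ualpha_p$, and then quotes the known splitting results (Milne, Prop.\ 15.31, resp.\ SGA3, Exp.\ XVII, Lem.\ 1.6).
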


\begin{proof}
By \cite[Exp.\,XVII, Prop.\,4.3.1(i)$\iff$(iii)]{SGAiii}, we may assume that $U$ is a power of either $\mathbf{G}_a$ or $\ualpha_p$. The former case follows from \cite[Prop.\,15.31]{Milne_2017_algebraic_groups}, while the latter follows from \cite[Exp.\,XVII, Lem.\,1.6]{SGAiii}.
\end{proof}

The following two lemmas will be required in order to deal with groups that are not affine.
The non-affine case will not be needed in the next sections.

\begin{lem}
\label{nonaffabvarquot}
If $G$ is a connected algebraic group over a field $k$, then there is an exact sequence of 
algebraic groups over $k$
\[
1 \longrightarrow H \longrightarrow G \longrightarrow A \longrightarrow 1
\]
with $H$ affine and connected and $A$ an abelian variety.
\end{lem}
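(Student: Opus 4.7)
\medskip

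\noindent\emph{Proof plan.}
The statement is Chevalley's structure theorem for connected algebraic groups, and the plan is to invoke it in the required generality, combining the classical smooth case with an extension to non-smooth groups over arbitrary fields.

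First, in the case where $G$ is smooth, the exact sequence is the classical Chevalley theorem (see, e.g., Conrad's exposition \emph{A modern proof of Chevalley's theorem on algebraic groups}). Briefly, one defines $H$ to be the maximal normal connected affine algebraic subgroup of $G$; its existence follows from the observation that the class of normal connected affine subgroups of $G$ is stable under the product operation $(H_{1}, H_{2}) \mapsto \mathrm{image}(H_{1} \times_{k} H_{2} \to G)$, combined with a dimension argument. The main content of the theorem is then to verify that $A := G/H$ is proper, hence an abelian variety (being smooth, connected and of finite type).

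To drop the smoothness assumption over an arbitrary field $k$, I would appeal to Brion's generalization of Chevalley's theorem (\emph{Some structure theorems for algebraic groups}, Proc.\ Sympos.\ Pure Math.\ 94 (2017)), which establishes precisely the stated result for all connected algebraic groups over arbitrary fields. Alternatively, one can reprise the construction directly: the maximal normal connected affine $H \subseteq G$ still exists by the product-stability argument above, and it remains to check that $A = G/H$ is smooth and proper. The key input for smoothness is that any infinitesimal normal subgroup of $A$ would pull back along $G \to A$ to a strictly larger normal connected affine subgroup of $G$ (as an extension of the infinitesimal, hence affine, piece by $H$, itself connected), contradicting the maximality of $H$.

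The main obstacle is the non-smooth imperfect case, where standard reductions via the reduced subscheme $G^{\mathrm{red}}$ are unavailable because it need not be a subgroup scheme, and where the quick passage to $\algcl{k}$ followed by Galois descent is complicated by the fact that $k^{\mathrm{perf}}/k$ is typically infinite. This is precisely where Brion's argument provides the necessary input, and invoking it is, in my view, the most efficient way to close out the proof.
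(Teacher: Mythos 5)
Your proposal is correct, but it takes a genuinely different route from the paper. You settle the lemma by citing the Raynaud--Brion form of Chevalley's theorem (every connected algebraic group over an arbitrary field is an extension of an abelian variety by a connected affine subgroup scheme), which is indeed exactly the statement to be proved. The paper instead gives a short reduction argument: it first quotients by an infinitesimal normal subgroup $I\trianglelefteq G$ with $G/I$ smooth (SGA3, Exp.\,VII$_{\rm A}$, Prop.\,8.3), so one may assume $G$ smooth; for perfect $k$ it quotes classical Chevalley; and for imperfect $k$ it spreads the Chevalley sequence for $G_{k_{\rm perf}}$ down to $G_{k^{p^{-n}}}$ for some finite $n$, identifies $G_{k^{p^{-n}}}$ with the Frobenius twist $G^{(p^n)}$ via $\lambda\mapsto\lambda^{p^n}$, and pulls the sequence back along the surjective $n$-fold Frobenius $G\to G^{(p^n)}$, whose kernel is infinitesimal and hence affine. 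So the paper's inputs are only classical Chevalley over perfect fields plus elementary Frobenius considerations, whereas your citation imports the full general structure theorem; both are legitimate, yours being shorter and the paper's more self-contained. Two caveats on your write-up: first, the ``classical'' smooth case is a theorem only over \emph{perfect} fields --- by Totaro's pseudo-abelian varieties, a smooth connected group over an imperfect field need not admit a smooth connected affine $H$, so the Brion/Raynaud statement (or the paper's Frobenius trick) is needed already for smooth $G$, not only to drop smoothness; second, your fallback sketch is thinner than it appears, since in the non-smooth setting the existence of a maximal normal connected affine subgroup does not follow from a naive dimension argument (there is no ascending chain condition on subgroup schemes, e.g.\ $\umu_p\subset\umu_{p^2}\subset\cdots\subset\uGm$), and the properness of $G/H$ --- the heart of the matter --- is left untouched. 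Since you rest the proof on the citation rather than on that sketch, the proposal stands.
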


\begin{proof}
By \cite[Exp.\,VII$_{\rm{A}}$, Prop.\,8.3]{SGAiii}, there is an infinitesimal normal algebraic $k$-group $I \trianglelefteq G$ such that $G/I$ is smooth, so we are free to assume that $G$ is smooth. If $k$ is perfect, then we are done by Chevalley's Theorem. Otherwise, $G_{k_{{\rm{perf}}}}$ may be written as an extension of an abelian variety by a linear algebraic $k$-group, and therefore $G_{k^{p^{-n}}}$ may be so written for some $n \geq 0$. Via the isomorphism $k^{p^{-n}} \xrightarrow{\sim} k$, $\lambda \mapsto \lambda^{p^n}$, $G_{k^{p^{-n}}}$ is identified with $G^{(p^n)}$, the $n$-fold Frobenius twist of $G$. Therefore, $G^{(p^n)}$ may be written as an extension of an abelian variety by a linear algebraic group. Since $G$ is smooth, the $n$-fold Frobenius map $G \rightarrow G^{(p^n)}$ is surjective, hence the result for $G$.
\end{proof}

\begin{lem}
\label{extabvarunipsogsplit}
Over a field $k$ of characteristic $p > 0$, for any extension $E$ of an abelian variety $A$ by a unipotent algebraic $k$-group $U$, there is an integer $n \geq 0$ such that the extension splits upon pullback by the map $[p^n]\colon A \rightarrow A$.
\end{lem}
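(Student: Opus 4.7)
The plan is to reduce to the case where $U$ is commutative, by induction on the derived length $d$ of $U$, and then to handle that case by first showing that $E$ itself must be commutative and then running a standard $\Ext^1$ argument.

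For the reduction, assume $d > 1$ and put $V := U^{(d-1)}$, the last nontrivial derived subgroup; it is commutative and, being characteristic in $U$, is normal in $E$. The quotient extension
\[
1 \to U/V \to E/V \to A \to 1
\]
has kernel of derived length $d-1$, so by induction there is $a \geq 0$ and a group-scheme section $\sigma \colon A \to [p^a]^*(E/V) = [p^a]^*E / V$. Since $\sigma$ is a homomorphism, its image is a subgroup of $[p^a]^*E/V$, whose preimage $\tilde E$ in $[p^a]^*E$ is a subgroup scheme fitting in $1 \to V \to \tilde E \to A \to 1$. Once the commutative base case supplies some $b \geq 0$ and a splitting $A \to [p^b]^*\tilde E$, composing with the inclusion $[p^b]^*\tilde E \hookrightarrow [p^{a+b}]^*E$ yields a splitting of $[p^{a+b}]^*E$.

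For the commutative base case I first claim that $E$ must itself be commutative. The key is a rigidity argument: for any $k$-scheme $S$ and section $u \in U(S)$, the $S$-morphism $A_S \to U_S$ given by $a \mapsto a u a^{-1}$ goes from a proper $S$-scheme whose structure sheaf pushes forward to $\calO_S$ --- this uses that $A$ is geometrically integral and proper with $H^0(A, \calO_A) = k$ --- into an affine $S$-scheme, so it factors through $S$; evaluation at $0 \in A(S)$ forces it to be the constant $u$. Thus $U$ is central in $E$, and the commutator of $E$ descends to a morphism $A \times A \to U$ which, by the same rigidity, must be the constant $0$. Hence $E$ is commutative.

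Once $E$ is commutative, I work in the abelian category of commutative $k$-algebraic groups. There both $[p^n]^*$ and $[p^n]_*$ coincide with multiplication by $p^n$ on $\Ext^1(A, U)$. Every commutative unipotent $k$-algebraic group is annihilated by some $[p^n] \colon U \to U$: a composition series of $U_{\algcl k}$ has factors among $\uGa$, $\ualpha_p$, and \'etale $p$-groups, each of which is $[p]$-torsion, and the exponent is additive in extensions. Consequently $[p^n]_* = 0$ on $\Ext^1(A, U)$, giving $[p^n]^*[E] = 0$ and the desired splitting of $[p^n]^*E \to A$. The main point requiring care is the rigidity argument ruling out a nontrivial conjugation action of $A$ on $U$; once that is in hand, the rest is essentially formal.
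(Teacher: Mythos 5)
Your proof is correct, and while its skeleton matches the paper's (show the extension is commutative, then kill the class in $\Ext^1(A,U)$ by a power of $p$), the execution differs at the two technical pinch points. For the dévissage, you reduce to commutative $U$ by induction on the derived length, using that the derived subgroups are characteristic in $U$ and hence normal in $E$ (this is standard, e.g.\ Milne's \emph{Algebraic Groups}, but worth citing since $U$ need not be smooth); the paper instead reduces, via the connected--étale sequence and SGA3 Exp.\,XVII, to $U$ a $p$-torsion finite étale commutative group or a power of $\ualpha_p$ or $\uGa$. For commutativity of $E$, you give a uniform rigidity argument --- an $S$-morphism from $A_S$ (proper, with $\pi_*\calO_{A_S}=\calO_S$ by cohomology and base change) to an affine $S$-scheme factors through $S$, applied to the universal point over $S=U$ --- which handles any commutative affine $U$ at once and shows both that $U$ is central and that the commutator map $A\times A\to U$ vanishes. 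The paper needs its case division precisely here: for finite étale $U$ or $\ualpha_p^r$ it uses that the automorphism functor is an affine group scheme and that homomorphisms from $A$ to it are trivial, while for $\uGa^r$ (whose automorphism functor is not of finite type) it uses constancy of orbit maps of rational points plus Zariski density over an infinite field; your universal-point rigidity argument is the scheme-theoretic version of the latter and subsumes both cases. Finally, your $\Ext^1$ step (commutative unipotent groups are killed by $p^n$, and biadditivity of $\Ext^1$ makes $[p^n]^*$ equal multiplication by $p^n$) is the same mechanism as the paper's, which just has the cosmetic advantage that after its reduction $U$ is $p$-torsion, so $\Ext^1(A,U)$ is killed by $p$ itself. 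Net effect: your route avoids the SGA3 structure theory and the case analysis, at the cost of invoking the characteristic-subgroup property of the derived series and cohomology-and-base-change in the rigidity lemma; only minor polish is needed, e.g.\ noting explicitly that the conjugation action of $E$ on $U$ factors through $A$ because $U$ is commutative before writing the orbit map on $A$-points.
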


\begin{proof}
We may assume that $U$ is either connected or finite \'etale. In the latter case, we may by filtration assume that $U$ is commutative and $p$-torsion. In the former case, \cite[Exp.\,XVII, Prop.\,4.3.1(i)$\iff$(iii)]{SGAiii} allows us to assume that $U$ is a power of either $\ualpha_p$ or $\uGa$. Thus we may assume that $U$ is either $p$-torsion finite \'etale commutative, or else a power of either $\ualpha_p$ or $\uGa$. If the extension $E$ is commutative, then because $U$ is $p$-torsion, $\Ext^1(A, U)$ is killed by $[p]$, hence $[p]_A$ induces the $0$ map on $\Ext^1(A, U)$, which proves the lemma. It therefore only remains to prove that $E$ is commutative.

First suppose that $U$ is either finite \'etale or a power of $\ualpha_p$. Then the automorphism functor ${\rm{Aut}}_{U/k}$ of $U$ is an affine $k$-group scheme, and the conjugation action of $A$ on $U$ is then given by a homomorphism $A \rightarrow {\rm{Aut}}_{U/k}$, which must be trivial. Therefore, $E$ is a central extension. The commutator map of $E$ therefore descends to a map $A \times A \rightarrow U$, which must be constant because $U$ is finite and $A \times A$ is connected and geometrically reduced. Thus $E$ is commutative.

It remains to show that $E$ is commutative under the assumption that $U = \uGa^r$ for some $r > 0$. Once again, conjugation induces an action of $A$ on $\uGa^r$. But we claim that any such action is trivial, and hence $E$ is central. To prove this, we may temporarily extend scalars and thereby assume that $k$ is infinite. Then for any $x \in k^r = \uGa^r(k)$, the map $a \mapsto a\cdot x$ induces a map $A \rightarrow \uGa^r$. Because $\uGa^r$ is affine, this map must be constant, hence $A$ acts trivially on $x$. Because $\uGa^r(k)$ is Zariski dense in $\uGa^r$, it follows that the action of $A$ on $\uGa^r$ is trivial and $E$ is central, as claimed. Once again, therefore, the commutator map of $E$ descends to a map $A \times A \rightarrow \uGa^r$ which must be constant because the target is affine. Thus $E$ is commutative, and the proof is complete.
\end{proof}

We now prove the main result of this section.

\begin{thm}
\label{unipiffnomultsubgp}
Let $k$ be a field  and $G$ an algebraic group over $k$.
\begin{itemize}
\item[(i)] If $G$ is connected, then $G$ is unipotent if and only if it does not contain a nontrivial multiplicative algebraic  $k$-group.
\item[(ii)] If $k$ is algebraically closed, then $G$ is unipotent if and only if it does not contain a nontrivial multiplicative algebraic  $k$-group.
\end{itemize}
\end{thm}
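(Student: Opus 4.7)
In both parts the ``only if'' direction is standard: a unipotent $k$-algebraic group admits no nontrivial character to $\uGm$ and therefore no nontrivial multiplicative subgroup. I focus on the ``if'' directions. For (i), begin by applying Lemma~\ref{nonaffabvarquot} to obtain an exact sequence
\[
1 \longrightarrow H \longrightarrow G \longrightarrow A \longrightarrow 1
\]
with $H$ affine connected and $A$ an abelian variety; the plan is then to show that $H$ is unipotent and that $A$ is trivial.

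For the first claim, $H \subseteq G$ inherits the hypothesis. By \cite[Exp.\,VII$_{\rm{A}}$, Prop.\,8.3]{SGAiii} pick an infinitesimal normal subgroup $I \trianglelefteq H$ with $H/I$ smooth; $I$ is finite connected and inherits the hypothesis, so Lemma~\ref{unip=nomultsubgp} gives that $I$ is unipotent. The smooth connected affine group $H/I$ has a maximal torus $T$ defined over $k$ by Grothendieck \cite[Exp.\,XIV, Th.\,1.1]{SGAiii}. If $T$ were nontrivial, then in characteristic $p>0$ the finite connected multiplicative subgroup $T[p]$ would lift to $H_0 \subseteq H$ sitting in an extension of $T[p]$ by $I$, and Lemma~\ref{extmultunip} would yield a nontrivial multiplicative subgroup of $H_0 \subseteq G$, a contradiction; in characteristic $0$, $I$ is trivial, and $T \subseteq H$ itself contradicts the hypothesis. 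Hence $T$ is trivial, which forces $H/I$ to be unipotent, and $H$ is then unipotent as an extension of unipotent groups.

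Turning to the abelian variety, suppose for contradiction that $A \neq 0$ and pick a prime $\ell$ coprime to $\Char k$. Then $A[\ell]$ is a nontrivial finite $k$-subgroup of $A$ of multiplicative type (it is an fppf-twist of $\umu_\ell^{2\dim A}$). When $k$ is perfect, Lemma~\ref{extmultunipperf} splits the preimage extension $1 \to H \to G' \to A[\ell] \to 1$, embedding $A[\ell]$ as a multiplicative subgroup of $G$ --- a contradiction. When $k$ is imperfect (so $p = \Char k > 0$), Lemma~\ref{extabvarunipsogsplit} supplies $n$ for which the pulled-back extension $1 \to H \to [p^n]^* G \to A \to 1$ splits, giving $A \hookrightarrow [p^n]^* G$ and hence $A[\ell] \hookrightarrow [p^n]^* G$. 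The projection $[p^n]^* G \twoheadrightarrow G$ has kernel $A[p^n]$; since $\gcd(\ell, p^n) = 1$, we have $A[\ell] \cap A[p^n] = 0$, so $A[\ell]$ injects into $G$ as a nontrivial multiplicative subgroup, again contradicting the hypothesis. Therefore $A = 0$ and $G = H$ is unipotent.

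For (ii), the identity component $G^\circ$ inherits the hypothesis, so (i) makes $G^\circ$ unipotent. The étale quotient $G/G^\circ$ is a constant finite group $\Gamma$ since $k$ is algebraically closed. If $|\Gamma|$ is a power of $\Char k$ (interpreted as $|\Gamma| = 1$ when $\Char k = 0$) then $\Gamma$ is unipotent and so is $G$. Otherwise some prime $\ell$ coprime to $\Char k$ divides $|\Gamma|$, producing a subgroup $\Z/\ell \subseteq \Gamma$ that is isomorphic to $\umu_\ell$ over $k = \algcl{k}$; its preimage in $G$ is an extension of $\umu_\ell$ by the connected unipotent $G^\circ$, which splits by Lemma~\ref{extmultunipperf} and embeds $\umu_\ell$ into $G$, contradicting the hypothesis. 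The main obstacle throughout is the imperfect-field case above, where Lemma~\ref{extmultunipperf} is unavailable and the coprimality $\gcd(\ell, p^n) = 1$ must be exploited via Lemma~\ref{extabvarunipsogsplit} to transfer a nontrivial multiplicative subgroup from the split cover $[p^n]^* G$ down to $G$ itself.
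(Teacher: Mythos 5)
Your proof is correct and follows essentially the same route as the paper: the same reduction via Lemma~\ref{nonaffabvarquot}, the same use of Lemmas~\ref{unip=nomultsubgp}, \ref{extmultunip}, \ref{extmultunipperf} and~\ref{extabvarunipsogsplit} for the affine part, the abelian-variety quotient, and the component group in (ii). Your only real deviation is cosmetic but welcome: in the characteristic-$p$ step you descend $A[\ell]$ from the split cover $[p^n]^*G$ to $G$ via the coprimality $A[\ell]\cap A[p^n]=0$, which makes precise the paper's looser assertion that $G$ ``contains a copy of $A$'' (really an isogenous image of $A$).
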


\begin{proof}
The ``only if'' direction is immediate, so we concentrate on the ``if'' direction. First we prove (i). Let $G$ be a connected algebraic group over $k$ that has no nontrivial multiplicative subgroup. We must show that $G$ is unipotent. Let us first give the proof under the additional assumption that $G$ is affine. By \cite[Exp.\,VII$_{\rm{A}}$, Prop.\,8.3]{SGAiii}, there is an infinitesimal (i.e., finite connected) algebraic subgroup $I \trianglelefteq G$ such that $\overline{G} := G/I$ is smooth (and connected), and $I$ is unipotent by Lemma~\ref{unip=nomultsubgp}. If $\overline{G}$ contains a nontrivial $k$-torus, then Lemma \ref{extmultunip} implies that $G$ contains a nontrivial multiplicative subgroup. Therefore, $\overline{G}$ does not contain a nontrivial torus, hence is unipotent by \cite[Ch.\,IV, Cor.\,11.5(2)]{Borel_1991_Linear_Algebraic_Groups} and \cite[Exp.\,XIV, Thm.\,1.1]{SGAiii}, which completes the proof in the affine case.

In the general (not necessarily affine) case, Lemma \ref{nonaffabvarquot} furnishes an exact sequence
\[
1 \longrightarrow H \longrightarrow G \longrightarrow A \longrightarrow 1
\]
with $H$ connected and affine, and $A$ an abelian variety. By the already-treated affine case, $H$ is unipotent. Assume for the sake of contradiction that $A \neq 0$. First suppose that $\Char k = 0$, or even just that $k$ is perfect. For any $n > 1$ not divisible by $\Char k$, $A[n]$ is a nontrivial \'etale multiplicative algebraic group
\cite[\href{https://stacks.math.columbia.edu/tag/03RP}{Tag 03RP}]{stacks_project}, hence Lemma \ref{extmultunipperf} implies that $G$ contains such an algebraic group, in violation of our assumption.

Now suppose that $\Char k = p > 0$. By Lemma \ref{extabvarunipsogsplit}, $G$ contains a copy of $A$. If $A$ were nontrivial, it would follow that $G$ contains a nontrivial multiplicative subgroup --- for instance, $A[n]$ for any $n > 1$ not divisible by $\Char k$. It follows that $A = 0$.

For (ii), let $G$ be an algebraic group over $k$ not containing a nontrivial multiplicative subgroup. By (i), $G^0$ is unipotent, and we need only show that the finite constant algebraic $k$-group $E := G/G^0$ is unipotent -- that is, $E(k)$ is a $p$-group. If not, then $E(k)$ admits a nontrivial $n$-torsion element for some $n$ not divisible by $p$. This element then defines a nontrivial multiplicative \'etale $k$-subgroup of $E$, so $G$ contains such a group by Lemma \ref{extmultunipperf}. This violates our assumption about $G$, hence completes the proof.
\end{proof}

\begin{remark}
The assumption of connectedness in Theorem~\ref{unipiffnomultsubgp}(i) is necessary beyond the algebraically closed case. Indeed, for $k$ imperfect of characteristic $p$, for instance, \cite[Cor.\,15.33]{Milne_2017_algebraic_groups} implies that there are non-split extensions of $\umu_{\ell}$ by $\ualpha_p$ for primes $\ell \neq p$. Such an extension is not unipotent, but it cannot contain a nontrivial multiplicative subgroup, as this would yield a splitting of the extension.
\end{remark}

\section{Reduction to a Statement About Torsion Line Bundles}
\label{sec:reduction}

Let $k$ be a field and let $G$ be a linear algebraic 
group over $k$.
The purpose of this section is to show that if a $G$-torsor $E\to X$
is weakly versal for a class of affine $k$-schemes $\catC$ and $G$ contains a copy of $\umu_\ell$
(resp.\ $\uGm$), then
there is a  uniform upper bound on the number of   generators of an $\ell$-torsion (resp.\ any)
line-bundle over a scheme
in $\catC$. This is stated formally in Theorem~\ref{TH:reduction-to-line-bundles}
which makes  use of the following invariant.

\begin{dfn}
    Let $V$ be a vector bundle of   rank $n$ 
    over a scheme $X$. An open covering  $\calU$ 
    of $X$ is said to split $V$ if $V_{U}\cong\calO_{U}^n$ for all $U\in \calU$.
    The smallest possible cardinality of a covering which splits $V$ is called
    the  \emph{splitting number} of $V$ and denoted
    \[
    \spl(V).
    \]
\end{dfn}

Observe that $\spl(V)$ is finite if $X$ is quasi-compact.
Furthermore, for every morphism $f:Y\to X$,
we have $\spl(f^*V)\leq \spl(V)$.

\begin{prp}\label{PR:bound-on-splitting-number}
    Let $X$ be a noetherian scheme such that every finite collection of closed points of $X$
    is contained in an open affine subscheme of $X$, e.g., a quasi-projective scheme over a field. 
    Let $V$ be a rank-$n$ vector bundle over $X$. Then $\spl(V)\leq \dim X+1$.
\end{prp}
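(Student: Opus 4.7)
The plan is to prove the proposition by an iterative construction, producing $\dim X + 1$ trivializing opens that cover $X$, rather than by direct induction requiring extension of trivializations. Set $d := \dim X$. The base case $d = 0$ is immediate: then $X$ is noetherian of dimension $0$, has finitely many points all closed, so the hypothesis applied to this finite set yields an affine open of $X$ containing every point of $X$, hence $X$ itself is affine. Over the artinian ring $\calO_X(X)$, every finitely generated projective module is free, so $V$ is globally trivial and $\spl(V) = 1$.

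For $d \geq 1$, I would recursively build a descending sequence of closed subsets $X = Z_0 \supseteq Z_1 \supseteq \dots \supseteq Z_{d+1} = \varnothing$ together with opens $U_i \subseteq X$ satisfying $V|_{U_i} \cong \calO_{U_i}^n$ and $Z_{i+1} = Z_i \setminus U_i$, arranged so that $\dim Z_{i+1} < \dim Z_i$. At step $i$, assuming $Z_i$ is nonempty, let $\eta_{i,1}, \dots, \eta_{i,r_i}$ be the (finitely many) generic points of the top-dimensional irreducible components of $Z_i$. Each $\eta_{i,j}$ has a closed specialization $y_{i,j}$ in $X$. The hypothesis provides an affine open $W_i \subseteq X$ containing every $y_{i,j}$, and since opens are stable under generization, $W_i$ also contains every $\eta_{i,j}$.

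The central technical point is then to construct sections $m_1^{(i)}, \dots, m_n^{(i)} \in \Gamma(W_i, V)$ forming a basis of $V$ at each $\eta_{i,j}$, which I would accomplish by a prime-avoidance argument. For each $j$ separately, local freeness of $V$ near $\eta_{i,j}$ together with clearing denominators yields elements $\mu_1^{(i,j)}, \dots, \mu_n^{(i,j)} \in \Gamma(W_i, V)$ forming a basis of $V_{\eta_{i,j}}$. Because the $\eta_{i,j}$ are generic points of distinct irreducible components of $Z_i$, they are pairwise incomparable primes of $\calO(W_i)$, so $\bigcap_{j' \neq j} \eta_{i,j'} \not\subseteq \eta_{i,j}$ and I can choose $\lambda_j \in \bigcap_{j' \neq j} \eta_{i,j'} \setminus \eta_{i,j}$. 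Setting $m_k^{(i)} := \sum_j \lambda_j \mu_k^{(i,j)}$, only the $j = j^*$ summand survives mod $\eta_{i,j^*}$, multiplied by a nonzero element $\bar\lambda_{j^*} \in k(\eta_{i,j^*})$, so these sections still form a basis of $V \otimes k(\eta_{i,j^*})$ for every $j^*$.

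Taking $U_i$ to be the open subset of $W_i$ where $m_1^{(i)} \wedge \dots \wedge m_n^{(i)} \in \Gamma(W_i, \bwedge^n V)$ is nowhere vanishing, one gets an open of $X$ containing all the $\eta_{i,j}$ on which $V$ is trivialized by the chosen sections. Since $Z_{i+1} = Z_i \setminus U_i$ misses every generic point of a top-dimensional component of $Z_i$, its dimension drops strictly. After at most $d+1$ iterations $Z_{d+1} = \varnothing$, so $X = U_0 \cup \dots \cup U_d$, yielding $\spl(V) \leq d+1$. The main obstacle I expect is the prime-avoidance step when $Z_i$ is non-reduced or when the $\eta_{i,j}$ are not minimal primes of $\calO(W_i)$ (only of the closed subscheme $Z_i \cap W_i$); in that case the ``nonzero in $k(\eta_{i,j^*})$'' step still goes through at the level of residue fields, but one must verify incomparability of the $\eta_{i,j}$ as primes of $\calO(W_i)$, which follows because distinct generic points of $Z_i$ correspond to distinct irreducible components of $Z_i$ of the same dimension, neither of which specializes to the other.
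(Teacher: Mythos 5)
Your argument is correct, and its skeleton matches the paper's: iteratively produce opens $U_0,\dots,U_d$ on which $V$ is trivial, each containing points of every (top-dimensional) irreducible component of the closed set not yet covered, so the dimension of the complement drops by at least one per step, with the hypothesis on finitely many closed points used to place the relevant points in a single affine open. Where you genuinely differ is in the mechanism producing each trivializing open. The paper chooses one closed point on each irreducible component of the remaining closed set, puts them in an affine open $U$, invokes freeness of the semilocalization of $V_U$ at these finitely many closed points (a finitely generated projective module of constant rank over a semilocal ring is free), and shrinks $U$. You instead work with the generic points $\eta_{i,j}$ of the top-dimensional components, pass to closed specializations to apply the hypothesis and recover the $\eta_{i,j}$ by generization, and then build sections over the affine open that are simultaneously a basis at every $\eta_{i,j}$ via prime avoidance, taking $U_i$ to be the nonvanishing locus of their wedge; this trades the semilocal-freeness input for elementary prime avoidance plus the determinant criterion, at the cost of a longer construction. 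The obstacle you flag is not an issue: distinct irreducible components never contain one another, so their generic points are automatically incomparable as primes of $\calO(W_i)$, irrespective of reducedness or of whether they are minimal primes there. Note also that your $U_i$ need not be affine nor contain the chosen closed points, but neither is required, since $\spl$ only asks for an open cover and the dimension drop only needs the generic points to be covered.
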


\begin{proof}
	It is enough to show that there is a sequence $\{U_i\}_{i=1}^{\dim X+1}$ of open affine subsets of
	$X$ such that $V_{U_r}\cong \calO_{U_r}^n$ and $\dim (X- \bigcup_{i=1}^r U_i)\leq \dim X-r$ for all $r\in\{0,1,\dots,\dim X+1\}$. Indeed, if this holds, then $\{U_i\}_{i=1}^{\dim X+1}$
	is a covering of $X$ which splits $V$. 
	We prove the existence of the sequence by induction. Let $r\in\{1,\dots,\dim X+1\}$
	and suppose that $U_1,\dots,U_{r-1}$ were constructed.
	Put $Y=X- \bigcup_{i=1}^{r-1} U_i $
	and let $y_1,\dots,y_t$ be closed points of $Y$ meeting every irreducible component of $Y$.
	These points are also closed in $X$, so there is an open affine $U\subseteq X$
	with $y_1,\dots,y_t\in U$. Since $U$ is affine,  the semilocalization of $V_U$ at $\{y_1,\dots,y_t\}$
	is free. We may therefore shrink $U$ to assume that $V_U\cong \calO_U^n$.
	Now take $U_r$ to be $U$. We have $V_{U_r}\cong \calO_{U_r}^n$
	by construction, and since $U_r$ meets every irreducible component of $Y$,
	we also have $\dim (X- \bigcup_{i=1}^{r} U_i)=\dim (Y-U_r)\leq \dim Y-1\leq \dim X-(r-1)-1=\dim X-r$.
\end{proof}

Let $X$ be a scheme, let $\calM$ be an $\calO_X$-module and let $S\subseteq \Gamma(X,\calM)$.
As usual, we say that $S$  generates $\calM$ if the $\calO_X$-module map
\[(\alpha_s)_{s\in S}\mapsto \sum_{s\in S}\alpha_s s:\bigoplus_S \calO_X\to \calM\] 
is surjective.
The $\calO_X$-module $\calM$ 
is said to be \emph{globally generated} if there is some $S\subseteq \Gamma(X,\calM)$
generating $\calM$, and in this case we write
\[
\gen (\calM)
\]
for the minimal possible cardinality of such $S$.
This number is always finite if $X$ is affine and $\calM$ is quasicoherent   of finite type.

\begin{prp}\label{PR:generators-vs-splitting-number}
	Let $X$ be an affine scheme and let $V$ be a rank-$n$ vector bundle over $X$.
	Then $\gen (V)\leq n\spl(V)$.
\end{prp}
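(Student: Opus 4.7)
The plan is to construct $n \cdot \spl(V)$ explicit global generators of $V$ from a trivializing cover, via an algebraic analogue of a partition of unity. Write $X = \Spec A$, set $s = \spl(V)$, and fix an open cover $\{U_i\}_{i=1}^s$ of $X$ splitting $V$; on each $U_i$ let $e_{i,1},\dots,e_{i,n}$ denote a basis of $V|_{U_i}\cong \calO_{U_i}^n$. The strategy is to refine the cover to \emph{basic} opens $D(f_i)\subseteq U_i$ of the \emph{same} cardinality $s$, and then to clear denominators in each $e_{i,j}$ to extend them to global sections.

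First I would produce the partition of unity. For each $i$, the closed set $X\setminus U_i$ is the vanishing locus of some ideal $J_i\subseteq A$, and since $\bigcap_i(X\setminus U_i)=\emptyset$ one has $\sum_i J_i = A$. Hence there exist $f_i\in J_i$ with $\sum_{i=1}^s f_i = 1$. The inclusion $f_i\in J_i$ gives $D(f_i)\subseteq U_i$, and the identity $\sum f_i = 1$ shows $\{D(f_i)\}_{i=1}^s$ still covers $X$. Then on $D(f_i)$ the section $e_{i,j}$ lies in $\Gamma(D(f_i),V)=M[f_i^{-1}]$, where $M=\Gamma(X,V)$, so upon taking a common denominator with uniform exponent $N$, I can find $m_{i,j}\in M$ with $m_{i,j}|_{D(f_i)}=f_i^N\cdot e_{i,j}|_{D(f_i)}$.

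To conclude, I would check that the $ns$ elements $\{m_{i,j}\}$ generate $M$, for which by Nakayama it suffices to work prime by prime. Given $\frakp\in\Spec A$, some $f_{i_0}\notin\frakp$ because $\{D(f_i)\}$ covers $X$; then $f_{i_0}^N$ is a unit in $A_\frakp$ and $\{e_{i_0,j}\}_{j=1}^n$ is an $A_\frakp$-basis of $M_\frakp$, so $\{m_{i_0,j}\}_{j=1}^n$ alone already generates $M_\frakp$. The argument is a standard localization-and-patching; the one point requiring care is keeping the cover at exactly $s$ basic opens rather than refining into more pieces, which the ``one $f_i$ per $U_i$'' grouping of the partition of unity handles immediately.
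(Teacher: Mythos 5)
Your proof is correct and follows essentially the same route as the paper's: an algebraic partition of unity $\sum f_i = 1$ with $f_i$ chosen in the ideal defining $U_i$ produces principal opens $D(f_i)\subseteq U_i$ covering $X$ in the same number $s=\spl(V)$ of pieces, local bases are extended to global sections by clearing denominators, and generation is verified locally (your prime-by-prime check and the paper's appeal to Nakayama are the same local-to-global step). No substantive differences.
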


\begin{proof}
	Write $r=\spl(V)$. Then $X$ has an open covering $\{U_1,\dots,U_r\}$
	splitting $V$. 
	Write $X=\Spec A$. Then there are ideals $I_1,\dots,I_r\idealof A$
	such that
	\[U_i=\{\frakp \in\Spec A\suchthat I_i\nsubseteq \frakp\}\] for every $i\in\{1,\dots,r\}$.
	Since $X=\bigcup_{i=1}^r U_i$, we have $\sum_{i=1}^r I_i=A$.
	Thus, we can choose $a_i\in I_i$ for every $i$
	such that $\sum_{i=1}^r a_i=1_A$. Let $W_i$ be the principal open affine corresponding
	to $a_i$, namely,
	\[
	W_i=\{\frakp\in \Spec A\suchthat a_i\notin \frakp\}.
	\]
	Then $X=\bigcup_{i=1}^r W_i$ and $W_i\subseteq U_i$ for all $i$.
	
	Let $i\in \{1,\dots,r\}$.
	Since $V_{U_i}\cong \calO_{U_i}^n$, we also have $V_{W_i}\cong\calO_{W_i}^n$. Thus, there exist $f_1^{(i)},\dots,f_n^{(i)}\in \Gamma(W_i,V)$
	that generate $V_{W_i}$. By construction,  
	$\Gamma(W_i,V)$ is the localization
	of the $A$-module $\Gamma(X,V)$ at the multiplicative set $\{1,a_i,a_i^2,\dots\}$,
	so there are $g_1^{(i)},\dots,g_n^{(i)}\in \Gamma(X,V)$
	and $k\in\N$
	such that $f_j^{(i)}=a_i^{-k} g_j^{(i)}$ for all $j\in\{1,\dots,n\}$.
	
	Finally, put $S=\{g^{(i)}_j\where i\in\{1,\dots,r\},j\in\{1,\dots,n\}\}$. Then
	every $x\in X$ admits an open neighborhood where $S$ generates $V$. Since generating $V$ is a Zariski-local property, this means that $S$ generates $V$, and  it follows that
	$\gen(V)\leq |S|\leq nr$.
\end{proof}

We now prove  a key lemma.

\begin{lem}\label{LM:pre-reduction}
	Let $k$ be a field and let $\ell$
	be a prime number.
	Let $M$ be   $\uGm$ or $\umu_\ell$, 
	let $\rho:M\to \uGL_n$ be a monomorphism of algebraic $k$-groups,
	and let $\{x\mapsto x^{a_i}\}_{i=1}^n$ denote the characters of $M$
	occurring in the representation $\rho$. 
	Let $E\to X$ be a $\uGL_n$-torsor corresponding to a rank-$n$
	vector bundle $V$, and let $F\to Y$ be an $M$-torsor inducing a line bundle $L$. Suppose further that $Y$ is affine and $E\to X$
	specializes to $F\times^{M}\uGL_n\to Y$.  
	Then:
	\begin{enumerate}
		\item[(i)] When $M=\umu_\ell$, we have $\gen(L)\leq n\spl(V)$.
		\item[(ii)] When $M=\uGm$, we have $\gen(L)\leq (n\spl(V))^r$,
		where $r$ is the minimum of $|S|$ as $S$ ranges over all nonempty subsets of
		$\{a_1,\dots,a_n\}$ with $\gcd(S)=1$.
	\end{enumerate}
\end{lem}

\begin{proof}
	We prove (i) and (ii) together.
	Write $T=F\times^M\uGL_n$ 
	and denote by $W$ the rank-$n$ vector bundle over 
	$Y$ corresponding to $T$.

	Since $M$ is diagonalizable,	
	we may replace $\rho$ with $\Int(g)\circ \rho$
	for some $g\in\nGL{k}{n}$  to assume that $\rho$ is given section-wise by
	\[
	\vphi(x)=\begin{bmatrix}
	x^{a_1} & & \\
	& \ddots & \\
	& & x^{a_n}
	\end{bmatrix}.
	\]
	This does not affect the isomorphism class of the $\uGL_n$-torsor  $T\to Y$. 
	
	By assumption, there is a $k$-morphism $f:Y\to X$ such that $f^*E\cong T$ as $\uGL_n$-torsors. As a result, $f^*V\cong W$ and so
	$
	\spl(W)\leq \spl(V)
	$. Since $Y$ is affine, we may apply
	Proposition~\ref{PR:generators-vs-splitting-number} to $W$ and get
	\[
	\gen(W)\leq n\spl(V).
	\]
	
	Next, observe that since
	$T=M\times^{\umu_\ell} \uGL_n$,
	we have  
	\[W\cong L^{\otimes a_1}\oplus\dots \oplus L^{\otimes a_n},\]
	where here, $L^{\otimes a}$ means $(L^\vee)^{\otimes(-a)}$ if $a<0$.
	This can be seen   by inspecting the  cocycles representing
	the classes in $\HH^1_{\fppf}(Y,\uGL_n)$
	corresponding to both vector bundles. It follows that 
	each 
	$L^{\otimes a_i}$  is an epimorphic
	image of $W$, and therefore  $\gen(L^{\otimes a_i})\leq \gen(W)\leq n\spl(V)$.
	By a lemma of Swan  \cite[Lem.~3.4]{Bloch_1989_zero_cycles_num_generators}, we moreover have $\gen((L^{\otimes a_{i }})^{\otimes b })
	\leq \gen(L^{\otimes a_i})\leq n\spl(V)$ for all $b\in \Z$.
	
	Suppose now that $H=\umu_\ell$. Since $\rho$ is a monomorphism,
	there is some $1\leq i\leq n$ with $\ell\nmid a_i$.
	Thus, there is $b\in \Z$ such that $a_ib\equiv 1\, (\operatorname{mod} \ell)$.
	For that $b$, we get $\gen(L)=\gen((L^{\otimes a_i})^{\otimes b})\leq n\spl(V)$, as required.
	
	Finally, suppose that $H=\uGm$. Since $\rho$ is a monomorphism,
	$\gcd(a_1,\dots,a_n)=1$.
	Choose a subset $S\subseteq \{a_1,\dots,a_n\}$ of cardinality $r$
	such that $\gcd(S)=1$, say $S=\{a_{i_1},\dots,a_{i_r}\}$. Then 
	there are $b_1,\dots,b_r\in \Z$ such that  $\sum_{j=1}^r a_{i_j}b_j=1$.
	Then $L\cong \bigotimes_{j=1}^r (L^{\otimes a_{i_j}})^{\otimes b_j}$,
	and by what we have shown, 
	$\gen(L)\leq \prod_{i=1}^r\gen((L^{\otimes a_{i_j}})^{\otimes b_j})\leq (n\spl(V))^r$.
\end{proof}

\begin{thm}\label{TH:reduction-to-line-bundles}
	Let $k$ be a field and let $G$ be a closed subgroup of $\uGL_n$ ($n\in\N$).
	Let $E\to X$ be a $G$-torsor,
	and let $V$ denote the 	rank-$n$ vector bundle corresponding to $E\times^G\uGL_n$.
	Suppose that $E\to X$ is versal for a class of affine $k$-schemes
	$\catC$.
	\begin{enumerate}
		\item[(i)] If $G$ contains a copy of $\umu_\ell$
		for some prime number $\ell$,
		then for every $Y\in \catC$ and every $\ell$-torsion line bundle
		$L$ on $Y$, we have $\gen(L)\leq n\spl(V)$.
		\item[(ii)] If $G$ contains a copy of $\uGm$,
		then for every $Y\in\catC$ and every line bundle
		$L$ on $Y$, we have $\gen(L)\leq (n\spl(V))^r$,
		where $r$ is the number  in Lemma~\ref{LM:pre-reduction}(ii) attached to
		the monomorphism $\uGm\to G\to \uGL_n$.
	\end{enumerate}
\end{thm}

\begin{proof}
	(i) Let $Y\in\catC$ and let $L$ be an $\ell$-torsion line bundle
	on $Y$.
	Choose a $\umu_\ell$-torsor $F\to Y$ inducing $Y$.
	Since the $G$-torsor $E\to X$ is weakly versal for $\catC$,
	it specializes to $F\times^{\umu_\ell}G$. This means that
	the $\uGL_n$-torsor
	$F\times^{\umu_\ell}\uGL_n=(F\times^{\umu_\ell}G)^{\times G}\uGL_n$
	(over $Y$)
	is a specialization of $E\times^{G} \uGL_n$ (over $X$).
	Applying Lemma~\ref{LM:pre-reduction}(i) to $E\times^{G} \uGL_n\to X$
	now gives the desired conclusion $\gen(L)\leq n\spl(V)$.
	
	(ii) The proof is the same as in (i) except we use
	Lemma~\ref{LM:pre-reduction}(ii).
\end{proof}

\section{Torsion Line Bundles Requiring Many Sections to Generate}
\label{sec:torsion-line-bundles}

Let $k$ be a field.
The purpose of this section is to show that for every $m\in\N$ and $1<\ell\in\N$, there
is an $\ell$-torsion line bundle  over a smooth affine $k$-scheme $X$ that requires
at least $m$ global sections to generate. This will be coupled with
Theorem~\ref{TH:reduction-to-line-bundles}
in the next section in order to prove Theorem~\ref{TH:main}.

For any $m\in\N$, the existence of  affine schemes of finite type over $\R$ admitting 
$2$-torsion line bundles which cannot be generated by $m$ elements goes back at
least as far as
Swan \cite[Thm.~4]{Swan_1962_vec_bundles_and_proj_modules}, who attributes the example to Chase.
Similar $2$-torsion 
examples over affine varieties
over any field of characteristic not $2$ were
given by Shukla and Williams
\cite[Props.~7.13, 6.3]{Shukla_2021_classifying_spaces_for_etale_algebras}. 

\medskip

Since the  dimension of the base scheme $X$ would affect the   quantitative version
of Theorem~\ref{TH:main}, i.e.\ 
Theorem~\ref{TH:main-quantitative}, we would like $\dim X$ to be as small as possible.
To that end, given $\ell,m\in\N\cup\{0\}$ with $\ell\neq 1$,
it is convenient to define
\[
g_k(\ell,m)
\]
to be the smallest $d\in \N\cup\{0\}$
for which there exists a $d$-dimensional smooth affine $k$-scheme
$X$ carrying an $\ell$-torsion line bundle $L$
satisfying
$\gen (L)>m$. Note that $g_k(0,m)$ is the minimal dimension of
a smooth affine $k$-scheme carrying a (possibly non-torsion)
line bundle which cannot be generated by $m$ elements.
By the end of this section, we will show:

\begin{thm}\label{TH:bounds-on-generators}
	Let $k$ be a field and let $\ell,m $ be non-negative integers with $\ell\neq 1$. 
	\begin{enumerate}[label=(\roman*)]
		\item  $g_k(\ell,0)=0$ and $g_k(\ell,1)=1$.
		\item  $g_k(\ell,m) \in\{m,m+1,  m+2\}$.  
		\item  If   $\ell$ is divisible by a prime number greater than
		$m$ (e.g.\ $\ell=0$),
		then   $g_k(\ell,m)\in\{m, m+1\}$.
		\item  If $k$ is algebraically closed, $m>1$ and $\ell>0$, 
		then $g_k(\ell,m)\geq m+1$. Equality holds
		if  $\ell$ is moreover divisible
		by a prime number greater
		than $m$.
		\item  If $\Char k=0$
		and $\mathrm{trdeg}_\Q k$ is infinite,
		then    $g_k(0,m)=m$.
		\item $g_{\overline \F_p }(0,m)=m+1$ for every prime $p>0$ and $m\geq 2$.
	\end{enumerate}
\end{thm}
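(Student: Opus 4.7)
The universal lower bound $g_k(\ell,m)\geq m$ is immediate from the Forster--Swan-type estimate $\gen(L)\leq \dim X+1$ for a line bundle on a noetherian scheme of dimension $\dim X$: if $\gen(L)>m$, necessarily $\dim X\geq m$. This already settles (i), since on $X=\Spec k$ every line bundle has $\gen\geq 1>0$, while on a smooth affine curve over any field $k$ one can produce a nontrivial $\ell$-torsion line bundle (an affine open in a curve of positive genus whose Jacobian contains the required torsion, or, for $\ell=0$, any non-principal divisor class).

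All the upper bounds in (ii), (iii), (v), (vi) are proved via the contrapositive of Lemma~\ref{LM:power-of-c1}: it suffices to exhibit a smooth affine $X$ carrying an $\ell$-torsion line bundle $L$ with $c_1(L)^m\neq 0$ in $\CH^m(X)$. The uniform recipe is to begin from a smooth projective $\overline{X}$ with an $\ell$-torsion line bundle $\overline{L}$ satisfying $c_1(\overline{L})^m\neq 0$, and to remove a closed $Z\subset\overline{X}$ so that $X=\overline{X}\setminus Z$ is smooth affine and $c_1(L)^m$ survives the localization exact sequence
\[
\CH^m(Z)\to\CH^m(\overline{X})\to\CH^m(X)\to 0.
\]
For (ii), I would take $\overline{X}$ to be a product of smooth projective curves each carrying a nontrivial $\ell$-torsion line bundle, possibly supplemented by one or two copies of $\bbP^1$ to accommodate the ample divisor excised during affinization; a K\"unneth-type computation then yields $g_k(\ell,m)\leq m+2$. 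For (iii), when a prime $p>m$ divides $\ell$, I would instead use a finite $\umu_p$-cover of an open in $\bbA^{m+1}_k$ ramified along a smooth divisor, producing a $p$-torsion line bundle on a smooth affine $(m+1)$-fold with nonvanishing $c_1^m$, and hence the sharper bound $g_k(\ell,m)\leq m+1$.

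Parts (v), (iv), and (vi) require finer input. For (v), under $\Char k=0$ and $\mathrm{trdeg}_\Q k=\infty$, there exists an elliptic curve $E/k$ with $m$ $k$-rational points $P_1,\dots,P_m$ whose classes in $\Pic^0(E)$ are $\Z$-linearly independent; on $X=(E\setminus\{O\})^m$ the line bundle $L=\bigotimes_{i=1}^m\pi_i^*\calO_E(P_i-O)$ then satisfies $c_1(L)^m\neq 0$ by a K\"unneth computation in $\CH^\ast$ of products of smooth affine curves, yielding $g_k(0,m)=m$. For the lower bounds $g_k(\ell,m)\geq m+1$ in (iv) ($k$ algebraically closed, $\ell>0$, $m>1$) and (vi) ($k=\overline{\F}_p$, $m\geq 2$), I would invoke the classical result (Mohan Kumar, Murthy) that on a smooth affine $d$-fold over an algebraically closed field, every line bundle is generated by at most $d$ sections once $d\geq 2$. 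Combined with the matching upper bounds --- from (iii) for (iv) under the divisibility hypothesis on $\ell$, and from a product-of-elliptic-curves construction with a line bundle of positive multi-degree for (vi) --- this pins down the stated values.

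The main obstacle I anticipate is making the affinization step dimension-efficient: naive tricks (e.g.\ Jouanolou) would double the dimension, whereas the stated bounds permit only codimension-$1$ or $2$ surgery, and one must verify that this surgery does not kill $c_1(\overline{L})^m$ in $\CH^m$. The computation is particularly delicate in positive characteristic, where K\"unneth-type identifications for Chow groups may fail and must be replaced by explicit localization arguments on products of curves.
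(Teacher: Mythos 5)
Your skeleton (Forster-type bound for $g_k(\ell,m)\geq m$, and the Chern-class criterion of Lemma~\ref{LM:power-of-c1} for the upper bounds) matches the paper, but several of the key steps you rely on are false or unproved. The most serious is the lower bound in (iv) and (vi): the ``classical result'' you invoke --- that every line bundle on a smooth affine $d$-fold over an algebraically closed field is generated by $d$ sections once $d\geq 2$ --- is not true, and in fact contradicts your own part (v): for $k=\C$ (algebraically closed, of infinite transcendence degree) part (v) produces a line bundle on a smooth affine $m$-fold with $\gen(L)>m$. What is true is Murthy's criterion (Proposition~\ref{PR:Murthy}): $\gen(L)\leq m$ if and only if $c_1(L)^m=0$ in $\CH^mX$. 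The lower bound in (iv) then follows only for \emph{torsion} bundles, from Murthy's theorem that $\CH^mX$ is torsion-free for smooth affine $m$-folds over algebraically closed fields (Proposition~\ref{PR:Gk-alg-closed-lower-bound}); the lower bound in (vi) needs the much stronger vanishing $\CH^mX=0$, which is special to $\overline{\F}_p$ (Mohan Kumar: closed points are complete intersections; see Proposition~\ref{PR:Gk-pos-char}) and fails over $\C$.

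The upper-bound constructions are also not viable as sketched. For (v), linear independence of the classes $[P_i-O]$ in $\Pic^0(E)$ does not yield $c_1(L)^m\neq 0$: the class in question is $m!\,[(P_1-O)\times\cdots\times(P_m-O)]$, a zero-cycle lying in the kernel of the Albanese map, and there is no K\"unneth formula for $\CH_0$ of products of curves that makes its nonvanishing automatic; over $\algcl{\Q}$, where $E(\algcl{\Q})$ has infinite rank, all such classes are conjecturally (Bloch--Beilinson) zero. The genuine input is Bloch's theorem on the infinite-dimensionality of $\CH_0$ of a product of elliptic curves over an \emph{uncountable} algebraically closed field of characteristic $0$, followed by descent to a finitely generated field of definition (Proposition~\ref{PR:Gk-zero}) --- this is exactly why (v) requires infinite transcendence degree and why its positive-characteristic analogue remains open. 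Similarly, your product-of-curves recipe for the torsion cases (ii)--(iii) cannot work over algebraically closed fields: if each $\overline{L}_i$ is torsion, then $c_1(\overline{L})^m$ is a torsion class in the kernel of the Albanese map, hence vanishes already on the projective model by Roitman's theorem when $m\geq 2$, and appending $\bbP^1$ factors changes nothing. The paper's route is different and is what makes the dimension counts come out: take the complement in $\bbP^m$ of an integral hypersurface whose index is divisible by a prime $\ell$ (norm hypersurfaces of separable extensions, Lemma~\ref{LM:norm}), compute $\CH^*$ via the localization sequence (Proposition~\ref{PR:Gk-bound-I}), pass to a $\umu_\ell$-cover to adjust the torsion order, work over $k(t)$ where the needed extensions exist, and then spread out from $k(t)$ to $k[t]$ at the cost of exactly one dimension (Proposition~\ref{PR:Gk-to-Gkx}); this is where the bounds $m+1$ and $m+2$ in (ii), (iii) and (vi) come from.
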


We expect that part (v) also holds
in positive characteristic, see Remark~\ref{RM:positive-char}.
The characteristic-$0$ counterpart of (vi), i.e.,
the statement $g_{\algcl{\Q}}(0,m)=m+1$, is related to the   Bloch--Beilinson conjecture,
and in particular follows from it (Remark~\ref{RM:Bloch-Beilinson}).
We do not know if there are $k$, $\ell$, $m$
with $g_k(\ell,m)=m+2$; this is related to conjectures of Paulsen and Griffiths--Harris
(Remark~\ref{RM:Paulsen}).

In general, for $\ell>1$,
the precise  value of   $g_k(\ell,m)$  is sensitive to   
arithmetic properties of the field $k$. This demonstrated  in
Example~\ref{EX:Gk-and-u-inv} and Corollary~\ref{CR:upper-bound-on-Gk}.

\medskip

We begin by recalling a result of Forster \cite{Forster_1964_number_of_generators} (see
	\cite{First_2017_number_of_generators} for a more modern treatment): Every rank-$n$ projective module over a noetherian
	ring  of Krull dimension $m$ is generated
	by $m+n$ elements. This implies:

\begin{lem}\label{LM:Forster-bound}
	For every field $k$ and $\ell,m\in\N\cup\{0\}$ with $\ell\neq 1$, we have $g_k(\ell,m)\geq m$.
\end{lem}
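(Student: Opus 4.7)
The plan is to apply F\"orster's bound directly to any line bundle on a smooth affine $k$-scheme of sufficiently small dimension, without using any property specific to $\ell$-torsion line bundles. First I would note that a smooth affine $k$-scheme $X$ of dimension $d$ has the form $\Spec A$ for some smooth $k$-algebra $A$ of Krull dimension $d$, which in the cases of interest (finite type over $k$) is Noetherian; a line bundle on $X$ then corresponds to a rank-$1$ projective $A$-module.

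Next I would invoke the F\"orster bound recalled immediately before the lemma: any rank-$n$ projective module over a Noetherian ring of dimension $d$ is generated by $d+n$ elements. Applied to a line bundle $L$ on $X$, this yields $\gen(L) \leq d+1$. Consequently, for any smooth affine $k$-scheme $X$ of dimension $d \leq m-1$ and any line bundle $L$ on $X$, one has $\gen(L) \leq m$; in particular, no such $L$ can satisfy $\gen(L) > m$. By the very definition of $g_k(\ell,m)$, this forces $g_k(\ell,m) \geq m$, as claimed.

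There is no real obstacle to this argument; the work was done by F\"orster. I would remark in passing that neither the torsion condition $L^{\otimes \ell} \cong \calO_X$ nor the hypothesis $\ell \neq 1$ enters the proof, since F\"orster's bound applies to every line bundle regardless of its class in $\Pic(X)$; the lemma could equally well be stated without any reference to $\ell$.
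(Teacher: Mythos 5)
Your argument is exactly the paper's: the lemma is stated as an immediate consequence of F\"orster's bound (any rank-$n$ projective module over a noetherian ring of dimension $d$ is generated by $d+n$ elements), which gives $\gen(L)\leq d+1\leq m$ for any line bundle on a smooth affine $k$-scheme of dimension $d\leq m-1$, ruling out witnesses below dimension $m$. Your closing remark is also accurate --- the torsion hypothesis plays no role in this lower bound.
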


In what follows,
given a smooth  (irreducible)  $k$-variety $X$, we denote by
$\CH^iX$ the $i$-th Chow group of $X$; see \cite[\S8.3]{Fulton_1998_intersection_theroy_second_ed}.
The Chow ring of $X$ is $\CH^*X = \bigoplus_{i\geq 0}\CH^iX$.
Recall that a  morphism $f:X\to Y$ between smooth   $k$-varieties
induces a ring homomorphism $f^*:\CH^*Y\to \CH^*X$.
If $E$ is a vector bundle over $X$, then the $i$-th Chern class of $E$
is denoted $c_i(E)\in\CH^iX$. The total Chern class of $E$
is $c(E)=\sum_{i= 0}^{\rank E} c_i(E)\in \CH^*X$; see \cite[\S3.2]{Fulton_1998_intersection_theroy_second_ed}
for details. 

\begin{prp}[Murthy {\cite[Cor.~3.16]{Murthy_1994_zero_cycles_and_proj_modules}}]\label{PR:Murthy}
	Suppose that $k$ is algebraically closed, let $X$ be a smooth affine $k$-variety
	of dimension $m$ and let $L$ be  a line bundle over $X$.
	Then $\gen(L)\leq m$ if and only if $c_1(L)^m=0$.
\end{prp}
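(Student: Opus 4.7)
The plan is to establish the two implications separately.  The necessity ``$\gen(L)\leq m\Rightarrow c_1(L)^m=0$'' is a direct application of Lemma~\ref{LM:power-of-c1}: any $m$ generating sections of $L$ fit into a short exact sequence
\[
0\longrightarrow P\longrightarrow \calO_X^m \longrightarrow L\longrightarrow 0
\]
with $P$ locally free of rank $m-1$, and the Whitney sum formula $c(P)c(L)=1$ reads in degree $m$ as $(-1)^m c_1(L)^m = c_m(P)=0$.  Equivalently, the sections induce $f\colon X\to \bbP^{m-1}$ with $L\cong f^*\calO(1)$, and $c_1(L)^m$ pulls back from $\CH^m\bbP^{m-1}=0$.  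Combined with F\"orster's bound $\gen(L)\leq m+1$, this reduces the proposition to showing that $c_1(L)^m=0$ implies $\gen(L)\leq m$.

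For this converse, the plan is to reduce to a splitting problem and invoke Euler-class obstruction theory (Nori, Mohan Kumar, Bhatwadekar--Raja Sridharan, Murthy).  Dualising, a surjection $\calO_X^m\twoheadrightarrow L$ is the same as a splitting of the rank-$m$ projective module $E:=L^\vee\oplus \calO_X^{m-1}$ as $\calO_X\oplus Q$ for some rank-$(m-1)$ projective $Q$.  The Euler-class theory attaches to $E$ a class $e(E)$ in an Euler class group (associated to $R=\Gamma(X,\calO_X)$) whose vanishing is equivalent to such a splitting.  Murthy's contribution is a comparison map from this Euler class group to $\CH_0 X$ sending $e(E)$ to $\pm c_m(E)=\pm c_1(L)^m$, together with the crucial fact that, when $k$ is algebraically closed, this map is \emph{injective} on the image of Euler classes of rank-$m$ projective modules.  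Combining both assertions, $c_1(L)^m=0$ in $\CH_0 X$ forces $e(E)=0$, whence the splitting of $E$ and the sought-after surjection $\calO_X^m\twoheadrightarrow L$.

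The main obstacle is precisely this injectivity statement.  Its proof hinges on Murthy's moving lemma for zero-cycles on a smooth affine variety, the identification of $\CH_0 X$ with the bottom graded piece of the gamma filtration on $K_0(X)$ (a consequence of Bloch's formula in the smooth-affine case), and the algebraic closedness of $k$ (which makes closed points $k$-rational and eliminates residue-field obstructions).  These are the technically deepest ingredients, and I would cite \cite[Cor.~3.16]{Murthy_1994_zero_cycles_and_proj_modules} rather than reprove them here.
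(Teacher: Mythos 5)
The paper itself gives no proof of this proposition: it is imported verbatim from Murthy, so deferring the substantive implication to \cite[Cor.~3.16]{Murthy_1994_zero_cycles_and_proj_modules} is exactly what the authors do, and your necessity argument via the Whitney sum formula is word-for-word the paper's Lemma~\ref{LM:power-of-c1}, which they prove separately (and over an arbitrary field). In that respect your overall plan is consistent with the paper.

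However, the reduction you interpose before the citation is incorrect as stated. For $m\geq 2$ the module $E=L^\vee\oplus\calO_X^{m-1}$ \emph{always} splits off a free rank-one summand (it visibly contains $\calO_X$ as a direct summand), and its top Chern class always vanishes, since $c(E)=1-c_1(L)$ has no degree-$m$ component; so the asserted equivalence ``a surjection $\calO_X^m\twoheadrightarrow L$ is the same as a splitting $E\cong\calO_X\oplus Q$'' is vacuous on one side, and taken literally your argument would show that every line bundle is generated by $m$ sections. The correct tautology is different: an $m$-tuple of sections generates $L$ if and only if the induced pairing map $(L^\vee)^{\oplus m}\to\calO_X$ is surjective, i.e.\ if and only if the rank-$m$ module $(L^\vee)^{\oplus m}$ (equivalently $L^{\oplus m}$) splits off a free rank-one summand; and this module has top Chern class $c_m\bigl((L^\vee)^{\oplus m}\bigr)=(-1)^m c_1(L)^m$. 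It is to \emph{this} module that Murthy's splitting criterion (a rank-$m$ projective module over a smooth affine $m$-fold over an algebraically closed field has a free direct summand if and only if its top Chern class vanishes in $\CH^m X$) must be applied; with that substitution your sketch of the converse goes through. Finally, since you end up citing Murthy's Cor.~3.16 itself --- which \emph{is} the statement being proved --- the intermediate Euler-class-group apparatus adds nothing beyond the citation (and is in any case anachronistic: Murthy's 1994 argument works directly with zero-cycles and complete-intersection ideals rather than with a comparison map from an Euler class group).
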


It turns out that the ``only if'' part of Murthy's result holds for any
smooth scheme over any field; this will be our main  tool for bounding $g_k(\ell,m)$ from above.

\begin{lem}\label{LM:power-of-c1}
	Let $X$ be a smooth $k$-variety,   let $L$ be a line bundle over $X$ 
	and let $m\in\N$.
	Suppose that $c_1(L)^m\neq 0$ in $\CH^mX$. Then $\gen(L)>m$.
\end{lem}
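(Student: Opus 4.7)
The plan is to argue by contradiction using the Whitney sum formula for Chern classes. Suppose that $\gen(L)\leq m$; then there is a surjection $\calO_X^m\twoheadrightarrow L$ of $\calO_X$-modules. Because $L$ is locally free of rank $1$, this surjection locally splits, so its kernel $K$ is a vector bundle of rank $m-1$ sitting in a short exact sequence
\[
0\longrightarrow K\longrightarrow \calO_X^m\longrightarrow L\longrightarrow 0.
\]
First I would record this reduction together with the fact that $K$ is indeed locally free (this is the only non-formal input). Everything else will be pure Chern-class bookkeeping in the graded ring $\CH^*X$.

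Next I would apply the Whitney sum formula to obtain $c(K)\cdot c(L)=c(\calO_X^m)=1$ in $\CH^*X$. Writing $h:=c_1(L)$, so that $c(L)=1+h$, and $c(K)=1+a_1+\dots+a_{m-1}$ with $a_i\in\CH^iX$ (and no higher $a_i$ since $\rank K=m-1$), I would expand the product $(1+a_1+\dots+a_{m-1})(1+h)=1$ and compare components degree by degree. In degree $i$ for $1\le i\le m-1$ this gives the recursion $a_i+a_{i-1}h=0$, hence inductively $a_i=(-h)^i$.

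The decisive step is the degree-$m$ component of the equation. Since there is no $a_m$-term (as $\rank K = m-1$), matching components of degree $m$ yields
\[
a_{m-1}\cdot h = 0 \qquad\text{in } \CH^m X.
\]
Substituting $a_{m-1}=(-h)^{m-1}$ then gives $(-1)^{m-1}h^m=0$, that is $c_1(L)^m=0$, contradicting the hypothesis. The main (minor) obstacle is just making sure that $K$ really is a vector bundle and that the Whitney formula applies, and that the Chern-class computation takes place in the graded commutative ring $\CH^*X$; the rest is elementary bookkeeping. This argument is characteristic-free and uses only the smoothness of $X$ (to have a well-behaved Chow ring and intersection product), so it matches the generality claimed in the lemma.
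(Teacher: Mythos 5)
Your proof is correct and follows essentially the same route as the paper: both take the short exact sequence $0\to K\to\calO_X^m\to L\to 0$ coming from $m$ generating sections and exploit the Whitney sum formula together with the fact that $K$ has rank $m-1$. The only (cosmetic) difference is that the paper first tensors the sequence with $L^\vee$ and invokes $c_m$ of the rank-$(m-1)$ kernel being zero, whereas you solve $c(K)(1+c_1(L))=1$ degree by degree; the computations are equivalent.
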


\begin{proof}
	For the sake of contradiction, suppose that $L$ can be generated
	by $m$ global sections. Then there is an exact sequence of vector bundles on $X$,
	\[
	0\to E\to \calO_X^{\oplus m}\to L\to 0.
	\]
	By tensoring with $L^\vee$, the dual of $L$, we get an exact sequence
	\[
	0\to E'\to (L^\vee)^{\oplus m} \to \calO_X\to 0,
	\]
	where $E'=E\otimes L^\vee$. Thus,
	\[c(L^\vee)^m=c((L^\vee)^{\oplus m})=c(E')c(\calO_X)=c(E').\] 
	Since $\rank E'=m-1$,
	it follows that
	\[
	(-1)^mc_1(L)^m=c_1(L^\vee)^m = c_m(E')=0
	\]
	in $\CH^m X$. But this contradicts our assumptions.
\end{proof}  

In light of Proposition~\ref{PR:Murthy} and Lemma~\ref{LM:power-of-c1},
it is convenient to define
\[
G_k(\ell,m)
\]
to be the minimum possible dimension of a smooth affine $k$-scheme $X$
admitting an $\ell$-torsion line bundle $L$ satisfying $c_1(L)^m\neq 0$.
It is clear that $G_k(\ell,m)\geq m$,
and
Proposition~\ref{PR:Murthy} and Lemma~\ref{LM:power-of-c1}   imply: 

\begin{cor}\label{CR:gk-leq-Gk}
	For every field $k$ and integers $\ell,m\geq 0$ with $\ell\neq 1$, 
	we have  
	\[g_k(\ell,m)\leq G_k(\ell,m) .\]
	Moreover, when $k$ is algebraically closed,
	$g_k(\ell,m)=m$ if and only if $G_k(\ell,m)=m$.
\end{cor}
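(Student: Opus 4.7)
The plan is to combine the already-established Lemma~\ref{LM:power-of-c1} (which requires no hypothesis on $k$) with Murthy's criterion, Proposition~\ref{PR:Murthy} (which requires $k = \algcl{k}$), and to use F\"orster's bound, Lemma~\ref{LM:Forster-bound}, to pin down the dimension in the ``moreover'' part.

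For the inequality $g_k(\ell,m) \leq G_k(\ell,m)$, assume $G_k(\ell,m)$ is finite (otherwise there is nothing to prove) and unwind its definition: there is a smooth affine $k$-scheme $X$ of dimension $G_k(\ell,m)$ carrying an $\ell$-torsion line bundle $L$ with $c_1(L)^m \neq 0$ in $\CH^m X$. Lemma~\ref{LM:power-of-c1} applied to this $L$ gives $\gen(L) > m$, so the pair $(X,L)$ witnesses $g_k(\ell,m) \leq \dim X = G_k(\ell,m)$. This is the entire content of the first assertion.

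For the ``moreover'' clause, now assume $k$ is algebraically closed. The implication $G_k(\ell,m)=m \Longrightarrow g_k(\ell,m)=m$ is immediate by combining what was just proved with the lower bound $g_k(\ell,m) \geq m$ of Lemma~\ref{LM:Forster-bound}. For the converse, suppose $g_k(\ell,m)=m$ and pick a smooth affine $k$-scheme $X$ of dimension $m$ and an $\ell$-torsion line bundle $L$ on $X$ with $\gen(L) > m$. In particular $\gen(L) \neq m$, so the contrapositive of the equivalence in Proposition~\ref{PR:Murthy} forces $c_1(L)^m \neq 0$ in $\CH^m X$; thus $(X,L)$ certifies $G_k(\ell,m) \leq m$. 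The reverse inequality $G_k(\ell,m) \geq m$ is automatic because any class $c_1(L)^m \in \CH^m X$ vanishes when $\dim X < m$.

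There is essentially no substantive obstacle here: the corollary is a formal repackaging of the two main tools (Lemma~\ref{LM:power-of-c1} in one direction and Proposition~\ref{PR:Murthy} in the other) into the invariants $g_k$ and $G_k$. The only minor care needed is in reading Murthy's ``iff'' in the direction that turns the hypothesis $\gen(L) > m$ into the non-vanishing of $c_1(L)^m$; this works because $\gen(L) > m$ certainly implies $\gen(L) \neq m$, so Proposition~\ref{PR:Murthy} applies directly without invoking the F\"orster upper bound on $\gen(L)$.
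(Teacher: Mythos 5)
Your proof is correct and follows essentially the same route as the paper, which obtains the corollary precisely by combining Lemma~\ref{LM:power-of-c1} (for the inequality $g_k(\ell,m)\leq G_k(\ell,m)$) with Murthy's criterion, Proposition~\ref{PR:Murthy}, and the F\"orster lower bound of Lemma~\ref{LM:Forster-bound} for the ``moreover'' clause over algebraically closed fields. Your spelled-out use of the contrapositive of Murthy's equivalence and the trivial bound $G_k(\ell,m)\geq m$ is exactly what the paper leaves implicit.
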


We can use the second assertion of Corollary~\ref{CR:gk-leq-Gk} and another result of Murthy to get a better lower
bound on $g_k(\ell,m)$.

\begin{prp}\label{PR:Gk-alg-closed-lower-bound}
	If $k$ is algebraically closed, $\ell>0$ and $m\geq 2$, then $G_k(\ell,m)\geq m+1$.
	Consequently, $g_k(\ell,m)\geq m+1$.
\end{prp}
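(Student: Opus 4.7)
My first move would be to reduce the assertion about $g_k$ to one about $G_k$. By Lemma~\ref{LM:Forster-bound} we have $g_k(\ell,m)\geq m$, and by the second assertion of Corollary~\ref{CR:gk-leq-Gk} (which applies since $k$ is algebraically closed), the equality $g_k(\ell,m)=m$ holds if and only if $G_k(\ell,m)=m$. Consequently, proving the single inequality $G_k(\ell,m)\geq m+1$ delivers both claims of the proposition at once; the ``consequently'' clause is automatic.

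To establish $G_k(\ell,m)\geq m+1$, I would take an arbitrary smooth affine $k$-variety $X$ of dimension exactly $m$ together with any $\ell$-torsion line bundle $L$ on $X$, and show that $c_1(L)^m=0$ in $\CH^mX$. By the definition of $G_k$, this rules out $G_k(\ell,m)=m$. The central input is a companion theorem of Murthy from the same paper \cite{Murthy_1994_zero_cycles_and_proj_modules}: over an algebraically closed field, every torsion line bundle on a smooth affine variety of dimension $d\geq 2$ can be generated by $d$ global sections. Applying this with $d=m\geq 2$ gives $\gen(L)\leq m$, and the contrapositive of Lemma~\ref{LM:power-of-c1} then yields $c_1(L)^m=0$, as required.

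The main obstacle is simply pinpointing the precise statement inside Murthy's paper and verifying that the ``dimension $\geq 2$'' hypothesis is indeed needed only for that input. The hypothesis $m\geq 2$ in our proposition is genuinely essential: in dimension one, $\CH^1X=\Pic X$ can carry plenty of torsion (e.g.\ restricting torsion line bundles from an elliptic curve to the complement of a rational point), which is precisely why $g_k(\ell,1)=1$ in Theorem~\ref{TH:bounds-on-generators}(i). An alternative route that sidesteps the reference hunt proceeds entirely through Chow groups: since $\ell\cdot c_1(L)=0$ in $\CH^1(X)$, one has $\ell\cdot c_1(L)^m=0$ in $\CH^m(X)=\CH_0(X)$, so $c_1(L)^m$ is an $\ell$-torsion zero-cycle class; one can then invoke torsion-freeness results for the top Chow group of a smooth affine variety of dimension $\geq 2$ over an algebraically closed field (due to Srinivas, Levine and Murthy, and ultimately traceable via Bloch--Ogus to Artin's vanishing for affine varieties) to conclude $c_1(L)^m=0$.
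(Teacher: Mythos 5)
Your proposal is correct, and your ``alternative route'' is in fact exactly the paper's proof: since $L$ is $\ell$-torsion, $c_1(L)^m$ is an $\ell$-torsion class in $\CH^m X$, and Murthy's torsion-freeness theorem \cite[Thms.~2.11, 2.14]{Murthy_1994_zero_cycles_and_proj_modules} for the top Chow group of a smooth affine variety of dimension $m\geq 2$ over an algebraically closed field forces $c_1(L)^m=0$; the ``consequently'' clause then follows, as you say, from Lemma~\ref{LM:Forster-bound} and the second assertion of Corollary~\ref{CR:gk-leq-Gk}. Your primary route, by contrast, is a detour: the ``companion theorem'' you invoke (torsion line bundles on a smooth affine variety of dimension $d\geq 2$ over an algebraically closed field are generated by $d$ sections) is indeed true, but its natural proof is precisely the combination of the torsion-freeness of $\CH^d$ with the converse direction of Proposition~\ref{PR:Murthy}, so deducing $c_1(L)^m=0$ from it via the contrapositive of Lemma~\ref{LM:power-of-c1} goes around in a circle and buys nothing beyond what the direct Chow-group argument gives; it also carries the citation risk you yourself flag, namely that this statement may not appear verbatim in Murthy's paper. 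So I would simply promote your fallback argument to the main proof and drop the generation-theorem step; note also that if you did have the generation theorem as a black box, it would already give $g_k(\ell,m)\geq m+1$ directly, but you would still need the Chow-theoretic argument to get the stated bound on $G_k(\ell,m)$.
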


\begin{proof}
	Let $L$ be an $\ell$-torsion line bundle over a smooth affine
	$m$-dimensional $k$-scheme $X$. Murthy \cite[Thms.~2.11, 2.14]{Murthy_1994_zero_cycles_and_proj_modules} showed that $\CH^m X$ is torsion-free. Since $c_1(L)$ is $\ell$-torsion,
	we must have $c_1(L)^m=0$.
\end{proof}

In the next results, we  establish upper bounds
on $G_k(\ell,m)$ 
by   exhibiting smooth affine $k$-varieties $X$ carrying an $\ell$-torsion line bundle $L$
such that $c_1(L)^m\neq 0$.
We will use the following proposition
to produce such examples.
Recall that the index of a $k$-scheme $X$, denoted $\ind X$, is the 
greatest common divisor of $\{\dim_k K\where \text{$K$ is a finite $k$-field
with $X(K)\neq\emptyset$}\}$.  

\begin{prp}\label{PR:Gk-bound-I}
	Let $m,\ell\in\N$  and
	let $Z$ be an integral hypersurface in $\bbP^m$ having degree $d$
	and index $n$.
	Let $X=\bbP^m-Z$, let
	$L=\calO_{\bbP^m}(1)|_X$,
	let $X'$ be the complement of the zero section in
	the total space of $\calO_{\bbP^m}(\ell)|_X$, let
	$p:X'\to X$ be its structure morphism
	and let $L'=p^*L$. Then:
	\begin{enumerate}
		\item[(i)] $X$ is affine, $L^{\otimes d}\cong \calO_X$  and the order
		of $c_1(L)^m$ in the (additive) group $\CH^m X$ is $n$.
		Consequently, $c_1(L)^m\neq 0$ if $n>1$.
		\item[(ii)] $X'$ is affine, $L'^{\otimes \ell}\cong \calO_{X'}$ 
		and the order of $c_1(L')^m$ in $\CH^m X'$ is $\gcd(n,\ell)$.
		Consequently, $c_1(L')^m\neq 0$ if $\gcd(n,\ell)>1$.
	\end{enumerate}
\end{prp}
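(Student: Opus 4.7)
The plan is to deduce both parts from the localization sequence for Chow groups combined with elementary facts about line bundles and their first Chern classes.

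For (i), the scheme $X=\bbP^m-Z$ is affine because $Z$ is the zero locus of an ample section of $\calO_{\bbP^m}(d)$; the very same section trivializes $\calO_{\bbP^m}(d)|_X = L^{\otimes d}$, proving $L^{\otimes d} \cong \calO_X$. For the order of $c_1(L)^m$, I would use the localization sequence
$$\CH^{m-1}(Z) \longrightarrow \CH^m(\bbP^m) \longrightarrow \CH^m(X) \longrightarrow 0,$$
in which the left-hand map is the proper pushforward by the inclusion $Z \hookrightarrow \bbP^m$. Since $\CH^m(\bbP^m) \cong \Z$ is generated by $c_1(\calO_{\bbP^m}(1))^m = [\mathrm{pt}]$ and the pushforward sends a closed point $P \in Z$ to $[\kappa(P):k]$, the image of the pushforward is generated by the residue-field degrees of closed points of $Z$, whose gcd equals $\ind Z = n$ by definition. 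Hence $\CH^m(X) \cong \Z/n\Z$ with $c_1(L)^m$ mapping to a generator, so its order is exactly $n$.

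For (ii), the scheme $X'$ is the $\uGm$-torsor over $X$ associated to the line bundle $N := \calO_{\bbP^m}(\ell)|_X = L^{\otimes\ell}$; as such, it identifies with the relative spectrum $\Spec_X\bigl(\bigoplus_{i \in \Z} N^{\otimes i}\bigr)$, hence is affine since $X$ is. The pullback $p^*N$ carries a canonical nowhere-vanishing tautological section, so $L'^{\otimes\ell} \cong p^*N \cong \calO_{X'}$. To compute the order of $c_1(L')^m = p^*c_1(L)^m$, I would invoke the Gysin sequence for the complement of the zero section of a line bundle,
$$\CH^{m-1}(X) \xrightarrow{\,\cdot\, c_1(N)\,} \CH^m(X) \xrightarrow{\,p^*\,} \CH^m(X') \longrightarrow 0,$$
where $c_1(N) = \ell \cdot c_1(L)$. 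A further application of the localization sequence (now with target $\CH^{m-1}(\bbP^m)$) shows that $\CH^{m-1}(X)$ is a cyclic quotient of $\CH^{m-1}(\bbP^m) = \Z$ generated by $c_1(L)^{m-1}$. Therefore the image of multiplication by $\ell c_1(L)$ in $\CH^m(X) \cong \Z/n\Z$ is the cyclic subgroup generated by $\ell$, i.e.\ the multiples of $\gcd(\ell,n)$. Consequently the image of $c_1(L)^m$ in the quotient $\CH^m(X')$ has order exactly $\gcd(\ell,n)$, as required.

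The one step that requires care is the identification of $\CH^{m-1}(X)$ as cyclic and generated by $c_1(L)^{m-1}$; without it, one only obtains the divisibility $\ord(c_1(L')^m) \mid \gcd(\ell,n)$. The second localization sequence dispatches this, and the rest of the argument is a formal manipulation of the Chow-theoretic tools together with elementary arithmetic in $\Z/n\Z$.
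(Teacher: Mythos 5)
Your proof is correct and follows essentially the same route as the paper: the localization sequence $A_0(Z)\to\CH^m\bbP^m\to\CH^m X\to 0$ identifying $\CH^m X\cong\Z/n$ via the index of $Z$, and Fulton's sequence for the complement of the zero section of a line bundle giving $\CH^m X'\cong \Z/\gcd(n,\ell)$ with $c_1(L')^m$ the image of a generator. The only (harmless) divergence is that you trivialize $L^{\otimes d}$ and $L'^{\otimes\ell}$ by explicit nowhere-vanishing sections, whereas the paper reads these torsion statements off from its computations of $\CH^1 X$ and $\CH^1 X'$.
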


\begin{proof}
	(i) The scheme $X$ is affine because it is the complement of an ample divisor
	in $\bbP^m$.
	
	Recall \cite[\S8.4]{Fulton_1998_intersection_theroy_second_ed}
	that $\CH^*\bbP^m=\Z[h\where h^{m+1}=0]$, where $h$ is the class of a hyperplane
	in $\bbP^m$ and lives in degree $1$.
	Write $\CH_i(Z)$ for the group of $i$-cycles on $Z$ modulo rational equivalence
	(we have $\CH_i(Z)=\CH^{m-1-i}Z$ when $Z$ is smooth). 
	By \cite[Prop.~1.8]{Fulton_1998_intersection_theroy_second_ed},
	there is an exact
	sequence 
	\begin{equation}\label{EQ:Chow-hypersurface-comp}
	\CH_i(Z)\to \CH^{m-i} \bbP^m\to \CH^{m-i} X\to 0
	\end{equation}
	in which the first map is pushforward along   $Z\to \bbP^m$
	and the second map is pullback along  $X\to \bbP^m$.
	Let $A_i$ denote the image  of $\CH_i(Z)$
	in $\CH^{m-i}\bbP^m=\Z h^{m-i}$.
	We may and shall identify
	$\CH^{m-i} X$ with $\Z h^{m-i}/A_i$ using \eqref{EQ:Chow-hypersurface-comp}.
	For $i=0$, this gives
	\[\CH^m X=\Z h^m / A_0 = \Z h^m /\Z (\ind Z) h^m =\Z h^m/\Z n h^m,\]
	while for $i=m-1$, this gives 
	\[
	\Pic X\cong \CH^1 X=\Z h / A_{m-1}=\Z h/\Z [Z]  =\Z h/\Z d h.
	\]
	Thus, $L$ is $d$-torsion. Since $c_1(\calO_{\bbP^m}(1))=h$, we have
	\[
	c_1(L)^m = h^m|_X = h^m +\Z nh^m,
	\]
	which means that $c_1(L)^m$ has order $n$ in $\CH^m X$.
	
	(ii) The scheme $X'$ is affine because the morphism $X'\to X$ is affine
	and $X$ is affine. We continue to use the notation from  the proof of (i). 
	
	By
	\cite[Ex.~2.6.3]{Fulton_1998_intersection_theroy_second_ed},
	for every $i$, there is an exact sequence
	\[
	\CH^{i-1} X\xrightarrow{c_1(\calO(\ell)|_{X})} \CH^i X \xrightarrow{p^*} \CH^i X'\to 0.
	\]
	Since  $c_1(\calO(\ell)|_{X}) = \ell h+ \Z d h$, this allows us
	to   identify
	$\CH^i X'$ with 
	\[\CH^i X/ (\ell h)  \CH^{i-1} X =
	\Z h^i / (A_{m-i} +\Z\ell h^i).\]
	and  $p^*$ with the quotient map. In particular,
	$\Pic X'=\CH^1 X'$ is $\ell$-torsion, hence $L$ is $\ell$-torsion. Furthermore,
	\[
	\CH^mX' = \Z h^m / \Trings{ nh^m,\ell h^m}
	=\Z h^m /\Z\gcd(n,\ell)h^m.
	\]
	Now, by (i),
	\[
	c_1(L')^m=p^*(c_1(L)^m)=p^*(h^m+ \Z n h^m)=h^m+\Z\gcd(n,\ell)h^m  
	\]
	and it follows that $c_1(L')^m$ has order $\gcd(n,\ell)$ in $\CH^m X'$.
\end{proof}

\begin{remark}
	In an earlier version of this text, we gave a different construction
	of $\ell$-torsion  ($\ell>1$) line bundles $L$ with $c_1(L)^m\neq 0$.
	However, we found out that complements of high-index hypersurfaces in projective spaces  give 
	rise to examples of smaller dimension.
	
	In our original construction, we took $X$ to be the total space of the line bundle
	$\calO(-\ell)$ over $\bbP^m$ minus the zero section, and $L$ to be the pullback of
	$\calO(-1)$ along $X\to \bbP^m$. We then replaced $X$ and $L$ with their pullback
	along the standard \emph{Jouanolou device} of $\bbP^m$ \cite{Jouanolou_1972_suite_exacte_de_Mayer_Vietoris}
	to make $X$ affine. 
	The resulting $\ell$-torsion line is induced by the $\umu_\ell$-torsor
	$\uGL_{m+1}/[\begin{smallmatrix}\uGL_m & \\ &  1\end{smallmatrix}] \to 
	\uGL_{m+1}/[\begin{smallmatrix} \uGL_m & \\ & \umu_\ell\end{smallmatrix}]$.
	We omit the details as they will not be needed here; the proof that $L$ is
	$\ell$-torsion and $c_1(L)^m\neq 0$
	is similar to the proof of Proposition~\ref{PR:Gk-bound-I}(ii). (When passing
	to the Jouanolou device, apply  \cite[\href{https://stacks.math.columbia.edu/tag/0GUB}{Tag 0GUB}]{stacks_project}.)
\end{remark}

\begin{example}\label{EX:Gk-and-u-inv}
	Recall that the $u$-invariant of $k$, denoted $u(k)$,
	is the supremum of all the integers $n$ for which $k$ admits an $n$-dimensional
	nonsingular anisotropic quadratic form. We claim that
	\[
	g_k(2,m)=G_k(2,m)=m
	\]
	whenever $1\leq m< u(k)$. In particular, if $u(k)=\infty$, e.g., if $k$ is real,
	then $g_k(2,m)=G_k(2,m)=m$ for all $m$. To see this, let $q:k^{m+1}\to k$
	be an  anisotropic nonsingular quadratic form, let $Z$ be the quadric hypersurface $q=0$ in $\bbP^m$  
	and let $X=\bbP^m-Z$. Since $q$ is nonsingular and anisotropic, $Z$ is integral,
	and
	by Springer's Theorem \cite[II, Thm.~5.3]{Scharlau_1985_quadratic_and_hermitian_forms}, $\ind Z=2$.
	Proposition~\ref{PR:Gk-bound-I}(i)  now
	tells us that the line bundle
	$\calO_{\bbP^m}(1)|_X$ is $2$-torsion and satisfies $c_1(L)^m\neq 0$.
	As a result, $G_k(2,m)\leq m$, and it follows
	that $g_k(2,m)=G_k(2,m)=m$.
	
	In the special case $k=\R$ and $q(x)=\sum_{i=1}^{m+1} x_i^2$,
	the scheme $X$ is isomorphic to the quotient of the $m$-sphere by the antipodal map, 
	and we recover the  example
	in Swan
	\cite[Thm.~4]{Swan_1962_vec_bundles_and_proj_modules}. 
\end{example}

In order to apply Proposition~\ref{PR:Gk-bound-I},
we need to find integral hypersurfaces in $\bbP^m$ having index greater than $1$. Norm forms
of finite field  extensions   give rise to such examples.

\begin{lem}\label{LM:norm}
	Let $K/k$ be a finite separable field extension of degree $n$
	and let $a\in K$ be an element such that $K=k[a]$.
	For every  $m\in\{1,\dots,n-1\}$ set $V_m=\Span\{1,a,\dots,a^{m}\}$,
	and let $Z_m$   be the hypersurface in $\bbP(V_m)\cong \bbP^{m}$ cut
	by the equation  $\Nr_{K/k}(x_0+x_1a+\dots +x_{m}a^{m})=0$. Then $Z_m$ is integral.
	Moreover, if $[K:k]$ is a power
	of
	a prime number $\ell$, then $\ell\mid\ind Z_m$.
\end{lem}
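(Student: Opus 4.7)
The plan is to realize $Z_m$ as the projective vanishing locus of the explicit homogeneous polynomial $f(x_0,\ldots,x_m) := \Nr_{K/k}(x_0+x_1 a+\cdots+x_m a^m) \in k[x_0,\ldots,x_m]$, of degree $n$, and then study the two claims by base-changing to $\algcl{k}$ and to a hypothetical splitting field $L$, respectively.

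For integrality, I would exhibit over $\algcl{k}$ the factorization $f = \prod_{i=1}^n L_i$, where $L_i := x_0 + \sigma_i(a)x_1 + \cdots + \sigma_i(a)^m x_m$ and $\sigma_1,\ldots,\sigma_n \colon K \to \algcl{k}$ are the $n$ distinct $k$-embeddings (distinct by separability). These linear forms are pairwise non-proportional: $L_i = \lambda L_j$ forces $\lambda = 1$ (comparing constant coefficients), then $\sigma_i(a) = \sigma_j(a)$, whence $\sigma_i = \sigma_j$ because $K = k[a]$. Since $\Gal(\algcl{k}/k)$ acts transitively on $\{\sigma_1,\ldots,\sigma_n\}$ and $\algcl{k}[x_0,\ldots,x_m]$ is a UFD, any $k$-rational factorization of $f$ would correspond to a Galois-invariant proper nontrivial subset of $\{L_1,\ldots,L_n\}$, of which there are none. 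Hence $f$ is irreducible over $k$, so $Z_m$ is integral.

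For the divisibility statement, assume $n = [K:k] = \ell^r$ and let $L/k$ be a finite extension with $Z_m(L) \neq \emptyset$; the goal is to show $\ell \mid [L:k]$. An $L$-point of $Z_m$ yields $(x_0,\ldots,x_m) \in L^{m+1} \setminus \{0\}$ such that $\alpha := x_0 + x_1 a + \cdots + x_m a^m \in K\otimes_k L$ has vanishing norm. Decomposing the finite \'etale $L$-algebra $K \otimes_k L = L[a] = \prod_j K_j$ into fields, $\alpha$ must vanish in some factor $K_j$. Because $K_j$ is an $L$-algebra quotient of $L[a]$, it has the form $K_j = L[\bar a]$, where $\bar a$ kills the nonzero polynomial $\sum_{i=0}^m x_i T^i \in L[T]$; therefore $[K_j:L] \leq m$. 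On the other hand, the composition $K \to K \otimes_k L \to K_j$ is a nonzero $k$-algebra map out of a field, hence injective, so $\ell^r = [K:k]$ divides $[K_j:k] = [K_j:L]\cdot [L:k]$. Since $[K_j:L] \leq m < \ell^r$, the $\ell$-adic valuation of $[K_j:L]$ is strictly less than $r$, forcing $\ell \mid [L:k]$. Taking the gcd over all such $L$ yields $\ell \mid \ind Z_m$.

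The two main pieces of work are the Galois-descent argument for irreducibility and the identification $K_j = L[\bar a]$ together with the resulting degree bound. Neither is difficult once set up, but both rely on the hypothesis $K = k[a]$ in an essential way: without it, the $L_i$ might collapse or the quotients $K_j$ might not be monogenic, breaking both steps simultaneously.
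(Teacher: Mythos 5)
Your proposal is correct. Where it diverges from the paper is mainly in the integrality claim: the paper simply invokes Flanders' theorem on the irreducibility of norm forms (``and its proof'', since only the truncated form in $m+1\leq n$ variables is at stake), whereas you prove it directly, factoring $\Nr_{K/k}(x_0+x_1a+\cdots+x_ma^m)$ over $\algcl{k}$ into the $n$ linear forms attached to the distinct embeddings and ruling out a $k$-rational subfactorization by transitivity of the $\Aut(\algcl{k}/k)$-action on the embeddings; your argument is self-contained and makes visible exactly where $K=k[a]$, separability, and $m\geq 1$ enter (pairwise non-proportionality of the forms). For the index statement, both proofs run on the same engine --- a zero of the norm over $L$ forces $K\otimes_k L$ to fail to be a field --- but the mechanisms differ slightly: the paper concludes via the coprimality criterion (for $K/k$ separable, $K\otimes_k L$ is a field when $\gcd([K:k],[L:k])=1$), while you extract a field factor $K_j=L[\bar a]$ killed by the degree-$\leq m$ polynomial $\sum_i x_iT^i$, getting $[K_j:L]\leq m<\ell^r$ while $\ell^r\mid[K_j:L][L:k]$, hence $\ell\mid[L:k]$. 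Your version is a bit more explicit (it again uses monogenicity, and quantifies how the truncation $m\leq n-1$ is what produces the small factor), at the cost of length; the paper's is shorter by outsourcing the irreducibility and quoting the standard coprimality fact.
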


\begin{proof}
	The fact that $Z_m$ is integral follows
	from \cite[Thm.~1]{Flanders_1953_norm_function} and its proof.
	
	Suppose now that $[K:k]=\ell^r$ for a prime number
	$\ell$ and $r\in\N$. We need to show that $\ell\mid \ind Z_m$.
	Let $L$ be a finite $k$-field with $Z_m(L)\neq\emptyset$.
	Then the norm $N_{L\otimes_k K /L}: L\otimes_k K\to L$ has a non-trivial zero.
	This means that   $L\otimes_k K$
	is not a field. 
	This can happen only if $\ell^r=[K:k]$ and $[L:k]$ are not coprime,
	so $\ell$ must divide $[L:k]$.
\end{proof}

\begin{cor}\label{CR:upper-bound-on-Gk}
	Let $\ell$ be a prime number. 
	\begin{enumerate}
		\item[(i)]  If $k$ admits a separable field extension of degree $\ell$,
		then $G_k(\ell,m)=m$ for all $m\in\{1,\dots,\ell -1\}$.
		\item[(ii)] If $k$ admits a separable field extension of degree $\ell^r$ for some
		$r\in\N$, then $G_k(\ell,m)\leq m+1$ for all $m\in \{1,\dots,\ell^r-1\}$.
	\end{enumerate}
\end{cor}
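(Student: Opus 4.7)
The plan is to directly apply the machinery developed in Proposition \ref{PR:Gk-bound-I} together with Lemma \ref{LM:norm} to exhibit explicit smooth affine $k$-varieties realizing the claimed upper bounds. In both parts, the key input is the norm-form hypersurface: if $K/k$ is separable of degree $\ell^r$ (with $r=1$ in part (i)), then the polynomial $\Nr_{K/k}(x_0 + x_1 a + \dots + x_m a^m)$ is homogeneous of degree $\ell^r$ in the variables $x_0,\dots,x_m$, so Lemma \ref{LM:norm} produces an integral hypersurface $Z_m \subset \bbP^m$ of degree $d = \ell^r$ whose index $n := \ind Z_m$ is divisible by $\ell$.

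For (i), I would apply Proposition \ref{PR:Gk-bound-I}(i) to $Z=Z_m$ (now of degree $\ell$). The resulting variety $X = \bbP^m - Z_m$ is affine of dimension $m$ and smooth as an open subscheme of $\bbP^m$; the line bundle $L = \calO_{\bbP^m}(1)|_X$ is $\ell$-torsion (since $d=\ell$), and $c_1(L)^m$ has order $n \geq \ell > 1$, hence is nonzero. This gives $G_k(\ell,m) \leq m$, and combining with the already-noted lower bound $G_k(\ell,m) \geq m$ yields the claimed equality for $m \in \{1,\dots,\ell-1\}$.

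For (ii), the line bundle produced by the approach in (i) would be $\ell^r$-torsion, which is stronger than what we want. To fix this, I would apply Proposition \ref{PR:Gk-bound-I}(ii) instead: the scheme $X'$, obtained by removing the zero section from the total space of $\calO_{\bbP^m}(\ell)|_X$, is smooth affine of dimension $m+1$, and $L' = p^*L$ is genuinely $\ell$-torsion. The order of $c_1(L')^m$ in $\CH^m X'$ is $\gcd(n,\ell)$, which equals $\ell$ because $\ell$ is prime and divides $n$. Hence $c_1(L')^m \neq 0$, and $G_k(\ell,m) \leq m+1$ for all $m \in \{1,\dots,\ell^r-1\}$.

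There is no real obstacle; the proof is essentially an assembly job. The one step that deserves a moment of care is confirming that the norm form has the expected degree $\ell^r$ (this is standard, since $\Nr_{K/k}$ is homogeneous of degree $[K:k]$), and observing that smoothness of $X$ and $X'$ is automatic from their constructions (open in $\bbP^m$, respectively complement of the zero section in a line bundle over a smooth base).
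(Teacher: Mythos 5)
Your proposal is correct and follows essentially the same route as the paper: part (i) is Lemma~\ref{LM:norm} (with a primitive element for the degree-$\ell$ separable extension) fed into Proposition~\ref{PR:Gk-bound-I}(i), combined with the trivial lower bound $G_k(\ell,m)\geq m$, and part (ii) is the same hypersurface of degree $\ell^r$ fed into Proposition~\ref{PR:Gk-bound-I}(ii), using $\gcd(\ind Z_m,\ell)=\ell$. No gaps.
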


\begin{proof}
	(i)  
	Let $K/k$ be a finite separable field extension of degree $\ell$. 
	Choose $a\in K$ with $K=k[a]$, which exists by the Primitive Element Theorem,
	and define $Z_m $ as in Lemma \ref{LM:norm}.
	Then $Z_m$ is  an integral hypersurface in $\bbP^m$
	having degree and index $\ell$.
	Proposition~\ref{PR:Gk-bound-I}(i) now provides us with a line bundle
	bundle $L$ over $\bbP^m-Z_m$ such that $c_1(L)^m\neq 0$,
	so $G_k(\ell,m)\leq m$.

	(ii) This similar to (i), but we use Proposition~\ref{PR:Gk-bound-I}(ii)
	instead of part (i) of that proposition. 
\end{proof}

Corollary~\ref{CR:upper-bound-on-Gk} cannot be applied to algebraically
closed fields. This is remedied by the following proposition
at the cost of increasing the upper bound on $G_k(\ell,m)$ by $1$.

\begin{prp}\label{PR:Gk-to-Gkx}
	Let $k$ be a field and $\ell,m\in\N\cup\{0\}$ with $\ell\neq 1$.
	Then $G_k(\ell,m)\leq G_{k(t)}(\ell,m)+1$.
\end{prp}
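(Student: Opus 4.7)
The plan is to perform a spreading-out argument, lifting a witness of $G_{k(x)}(\ell,m)$ to a witness of $G_k(\ell,m)$ whose dimension is larger by exactly one. Set $d := G_{k(x)}(\ell,m)$. By definition we can pick a smooth affine $k(x)$-scheme $Y'$ of dimension $d$ together with an $\ell$-torsion line bundle $L'$ on $Y'$ satisfying $c_1(L')^m \neq 0$ in $\CH^m Y'$. I would then realize $Y'$ as the generic fiber of a smooth affine family over an open subset of $\bbA^1_k$, and verify that the spread retains the Chern-class non-vanishing.

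\textbf{Step 1 (Spreading out).} Writing $\bbA^1_k = \Spec k[x]$ and $k(x) = \varinjlim_{f\neq 0} k[x,1/f]$, standard finite-presentation techniques (EGA IV, \S8, \S11, or \cite[\href{https://stacks.math.columbia.edu/tag/01ZM}{Tag 01ZM}]{stacks_project} and the subsequent tags on descent of smoothness, affineness, and line bundles) produce a nonempty open subscheme $U \subseteq \bbA^1_k$ together with a finitely presented $U$-scheme $Y_U$ and a line bundle $L_U$ on $Y_U$ whose base change along $\Spec k(x) \to U$ recovers $(Y',L')$. Shrinking $U$ further, I can arrange simultaneously that: (a) $Y_U \to U$ is smooth of relative dimension $d$; (b) $Y_U \to U$ is affine; and (c) the isomorphism $L'^{\otimes \ell} \cong \calO_{Y'}$ spreads to an isomorphism $L_U^{\otimes \ell} \cong \calO_{Y_U}$, making $L_U$ an $\ell$-torsion line bundle.

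\textbf{Step 2 (Global properties of $Y_U$).} Since $U$ is smooth affine of dimension $1$ over $k$ and $Y_U \to U$ is smooth affine of relative dimension $d$, the composite $Y_U \to \Spec k$ is smooth of dimension $d+1$, and $Y_U$ is itself affine.

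\textbf{Step 3 (Transfer of $c_1^m$).} The generic-point inclusion $\iota : \Spec k(x) \to U$ is flat, hence so is the base-change morphism $f : Y' \to Y_U$. To make sense of flat pullback on Chow groups despite $f$ not being of finite type, I use the localization description: since $Y'$ is the cofiltered limit $Y' = \varprojlim_{V} Y_V$ over open neighborhoods $V \subseteq U$ of the generic point, the localization exact sequence gives
\[
\CH^m Y' \;=\; \varinjlim_{V \subseteq U} \CH^m Y_V,
\]
with transition maps given by open-immersion pullbacks, which preserve codimension and commute with the formation of $c_1$. Consequently the class of $c_1(L_U)^m$ maps to $c_1(L')^m \neq 0$ in the colimit, forcing $c_1(L_U)^m \neq 0$ already in $\CH^m Y_U$ (possibly after replacing $U$ by a smaller open, which is harmless).

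\textbf{Conclusion and main obstacle.} Combining the three steps, $(Y_U, L_U)$ is a smooth affine $k$-scheme of dimension $d+1$ equipped with an $\ell$-torsion line bundle whose $m$-th Chern power is non-zero, proving $G_k(\ell,m) \leq G_{k(x)}(\ell,m) + 1$. The only subtle point is the Chow-theoretic one in Step 3: justifying that flat pullback along the non-finite-type localization $Y' \to Y_U$ carries $c_1(L_U)^m$ to $c_1(L')^m$. I would handle this via the colimit presentation of $\CH^m Y'$ (equivalently, by observing that $c_1(L')^m$ is by definition represented by cycles which spread out to cycles on some $Y_V$ representing $c_1(L_V)^m$). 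Everything else --- the spreading out, the preservation of smoothness, affineness, and the $\ell$-torsion structure --- is routine.
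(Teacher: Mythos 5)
Your proposal is correct and follows essentially the same route as the paper: spread out $(Y',L')$ over a dense open $U\subseteq\bbA^1_k$ (the paper cites Poonen, Thms.~3.2.1 and 6.4.3, for the spreading out of the scheme and of the $\ell$-torsion line bundle) and then transfer the non-vanishing of $c_1^m$ by pulling back to the generic fiber. The only cosmetic difference is in justifying that pullback step: the paper cites the compatibility of $\CH^*Y_U\to\CH^*Y'$ with Chern classes from Elman--Karpenko--Merkurjev, whereas you use the colimit presentation $\CH^m Y'=\varinjlim_V\CH^m Y_V$, which is a perfectly valid alternative.
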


\begin{proof}
	This is shown by \emph{spreading out} from $\Spec k(t)$
	to $\Spec k[t]$.
	What we need is summarized neatly 
	in \cite[Thms.~3.2.1, 6.4.3]{Poonen_2017_rational_points}.
	
	In  detail, let $X$ be a smooth affine $k(t)$-scheme
	of dimension $G_{k(t)}(\ell,m)$ carrying an
	$\ell$-torsion line bundle
	$L$ with $c_1(L)^m\neq 0$.
	Put $S=\Spec k[t]$ and let $\xi=\Spec k(t)$ be the generic point of $S$. 
	By \cite[Thm.~3.2.1]{Poonen_2017_rational_points}, 
	there is a dense open
	subscheme $U$ of $S$ and a $U$-scheme of finite presentation
	$Y$ such that $Y_\xi =X$.
	Moreover, by shrinking $U$  if necessary, we may further assume
	that $Y\to U$ is smooth and $U$ is affine. Thus, $Y$ is a smooth affine $k$-scheme.
	By \cite[Thm.~6.4.3]{Poonen_2017_rational_points} (applied to $G=\uGm$), 
	we can shrink $U$ even further to guarantee
	that there exists  an $\ell$-torsion line bundle $M$ on $Y$ such that the pullback
	of $M$ along $\xi\to S$ is $L$.
	The map $X=Y_\xi\to Y$ 
	induces a ring homomorphism
	$\CH^* Y\to \CH^* X$ which is compatible
	with Chern classes  \cite[\S52F]{Elman_2008_algebraic_geometric_theory_of_quad_forms}.
	Thus, the image of $c_1(M)^m$ in $\CH^* X$ is $c_1(L)^m$, which is nonzero.
	We conclude that $G_k(\ell,m)\leq \dim Y=\dim X+1=G_{k(t)}(\ell,m)+1$.
\end{proof}

\begin{cor}\label{CR:Gk-general-upper-bound}
	Let $k$ be a field and let $m,\ell\in\N\cup\{0\}$ with
	$\ell\neq 1$.
	Then $G_k(\ell,m)\leq m+2$.
	When $\ell$ has a prime divisor greater than $m$ (this holds when $\ell=0$), 
	we moreover have $G_k(\ell,m)\leq m+1$.  
\end{cor}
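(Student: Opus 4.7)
The strategy is to combine Corollary~\ref{CR:upper-bound-on-Gk} with Proposition~\ref{PR:Gk-to-Gkx}, exploiting the fact that every rational function field admits separable extensions of any prescribed prime-power degree. Throughout I use the monotonicity $G_k(\ell,m)\leq G_k(\ell',m)$ whenever $\ell'\mid \ell$, since an $\ell'$-torsion line bundle is automatically $\ell$-torsion. The case $m=0$ is trivial: take $X=\Spec k$ with $L=\calO_X$, so that $c_1(L)^0=1\neq 0$ in $\CH^0 X=\Z$. So in what follows I may assume $m\geq 1$.

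The only step requiring real work is to show that, for any prime $\ell'$ and any $r\in \N$, the rational function field $k(y)$ admits a separable extension of degree $\ell'^r$. When $\ell'\neq \Char k$, the polynomial $z^{\ell'^r}-y \in k[y][z]$ is Eisenstein at the prime $y$ and separable, which suffices. When $\ell'=p=\Char k$, I iterate Artin--Schreier: set $z_0=y$ and, for $i=1,\ldots,r$, choose $z_i$ with $z_i^p-z_i=z_{i-1}$. Each intermediate field $k(z_{i-1})$ is purely transcendental of transcendence degree one over $k$ (since $z_{i-1}$ is transcendental over $k$), hence abstractly isomorphic to $k(y)$. The irreducibility of $T^p-T-z_{i-1}$ over $k(z_{i-1})$ therefore reduces to irreducibility of $T^p-T-y$ over $k(y)$, which in turn follows from the fact that $y$ is not of the form $a^p-a$ for any $a \in k(y)$: a short computation using the valuation $v_\infty$ at infinity rules out every possible value of $v_\infty(a)$. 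Thus $k(y)\subset k(z_r)$ is separable of degree $p^r$.

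With this construction in hand, the remaining arguments are routine. For the stronger bound, suppose $\ell$ has a prime divisor $\ell'>m$ (or $\ell=0$, in which case pick any prime $\ell'>m$). Then $k(y)$ has a separable extension of degree $\ell'$, Corollary~\ref{CR:upper-bound-on-Gk}(i) gives $G_{k(y)}(\ell',m)=m$ since $m<\ell'$, monotonicity yields $G_{k(y)}(\ell,m)\leq m$, and Proposition~\ref{PR:Gk-to-Gkx} delivers $G_k(\ell,m)\leq m+1$. For the general bound, I may assume no prime divisor of $\ell$ exceeds $m$; then I pick any prime divisor $\ell'$ of $\ell$ and choose $r$ with $\ell'^r \geq m+1$. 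Now $k(y)$ has a separable extension of degree $\ell'^r$, Corollary~\ref{CR:upper-bound-on-Gk}(ii) gives $G_{k(y)}(\ell',m)\leq m+1$, hence $G_{k(y)}(\ell,m)\leq m+1$, and Proposition~\ref{PR:Gk-to-Gkx} produces $G_k(\ell,m)\leq m+2$. The only real obstacle in the whole argument is the construction of separable $p^r$-extensions of $k(y)$ in characteristic $p$; once the Artin--Schreier tower is set up, everything else is formal.
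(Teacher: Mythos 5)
Your proposal is correct and takes essentially the same route as the paper: reduce to the rational function field via Proposition~\ref{PR:Gk-to-Gkx}, use the divisibility monotonicity $G(\ell,m)\leq G(\ell',m)$ for $\ell'\mid\ell$, and apply Corollary~\ref{CR:upper-bound-on-Gk}(i) and (ii) to separable extensions of prime (resp.\ prime-power) degree. The only difference is in producing those extensions: the paper gets a separable extension of $k(t)$ of \emph{every} degree $n$ from the single Eisenstein trinomial $x^n-tx-t$ (whose $x$-derivative is nonzero even when $\Char k\mid n$), whereas you split into a prime-to-characteristic radical extension and an Artin--Schreier tower --- more work, but equally valid.
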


\begin{proof}
	Let $K=k(t)$. 
	By Proposition~\ref{PR:Gk-to-Gkx},  it is enough
	to show that $G_{K}(\ell,m)\leq m$ when $\ell$ has a prime divisor $p>m$,
	and $G_K(\ell,m)\leq m+1$ in general.  
	Observe that $K$ has a separable field extension of degree $n$
	for every $n\in\N$, for instance, $K[x] / (x^n-tx-t)$.
	Letting $n$ range over the powers of some prime divisor $p$ of $\ell$
	and applying   Corollary~\ref{CR:upper-bound-on-Gk}(ii)
	gives $G_K(\ell,m)\leq G_K(p,m)\leq m+1$.
	If $\ell$ has a prime divisor $p$ greater  than $m$,
	then   applying Corollary~\ref{CR:upper-bound-on-Gk}(i) to that divisor
	gives $G_K(\ell,m)\leq G_K(p,m)=m$.	
\end{proof}

\begin{remark}\label{RM:Paulsen}
	Suppose that $k=\C$. 
	In \cite{Paulsen_2022_degree_of_alg_cycl_on_hypersurf}, Paulsen conjectured the following:
	\begin{enumerate}
		\item[(P)] Let $m,d\in\N$ with $d\geq 2m $ and let $Y$
		be a very general hypersurface of degree $d$ in $\bbP^{m+1}$. Then the degree of every 
		positive-dimensional closed subvariety $Z$ of $Y$ is divisible by $d$.
	\end{enumerate}
	The case $m=3$ is a famous conjecture of Griffiths--Harris.
	We  shall consider   conjecture (P) for a general field  $k$.
	
	Fix $k$ and $m$. If Paulsen's conjecture holds for
	every $d$ sufficiently large, then $G_k(\ell,m)\leq m+1$ for every $ \ell\in \N-\{1\}$.
	Indeed, choose $r\in\N$ coprime to $\ell$ such that (P)
	holds with $d=\ell r$,   let $Y$ be as in the conjecture and let $X=\bbP^{m+1}-Y$. 
	With notation as in the proof of Proposition~\ref{PR:Gk-bound-I},
	the conjecture says that for every $0\leq i< m$, the image of
	the pushforward map    $\CH^i Y\to \CH^{i+1}\bbP^{m+1}=\Z h^{i+1}$ is $ \Z^{i+1} d h^{i+1}$.
	The cokernel of this map is $\CH^{i+1} X$, so it
	can be identified with $\Z h^{i+1}/\Z dh^{i+1}$
	whenever $0\leq i<m$.
	Now put $L=\calO_{\bbP^{m+1}}(r)|_X$. Then $c_1(L)=rh+dh\Z$ while
	$c_1(L)^m=r^m h^m+\Z dh^m $. Since $\ell r=d$ and $\gcd(\ell,r)=1$,
	this means that $L$ is $\ell$-torsion and $c_1(L)^m\neq 0$. As $X$ is affine of dimension $m+1$,
	we conclude that $G_k(\ell,m)\leq m+1$.
	
	When $k=\C$, Paulsen \cite[Thm.~3, Prop.~7]{Paulsen_2022_degree_of_alg_cycl_on_hypersurf} 
	showed that for every $m$ there is a  subset
	$S\subseteq \N$ of positive density such that (P) holds for every $d\in S$.
	The set $S$ consists of numbers that are coprime to $m!$, and together
	with the argument of the last
	paragraph, it can be used to show that $G_\C(\ell,m)\leq m+1$ if
	all the prime factors of $\ell$ are greater than $m$. However, this is already included
	in Corollary~\ref{CR:Gk-general-upper-bound}. 
\end{remark}

The next results give bounds on $G_k(0,m)$.
They rely on deeper results of Bloch, Mohan Kumar, Murthy, Szpiro,
Roitman and Roy
appearing in
\cite{Mohan_1988_cancellation_proj_modules}, 
\cite{Bloch_1989_zero_cycles_num_generators}, 
\cite{Murthy_1994_zero_cycles_and_proj_modules}.

\begin{prp}[{\cite[\S5]{Bloch_1989_zero_cycles_num_generators}}]
	\label{PR:Gk-zero}
	For every $m\in\N$, there is $t\in \N$
	such that for  every field $k$ with $\Char k=0$
	and $\mathrm{trdeg}_\Q k\geq t$,
	we have $G_k(0,m)=m$.
\end{prp}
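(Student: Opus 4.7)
The plan is to combine the universal lower bound $G_k(0,m)\geq m$ with a construction of a suitable pair $(X,L)$ over $\mathbb{C}$, followed by spreading out and base change to any $k$ of large transcendence degree over $\Q$.

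For the construction over $\mathbb{C}$, I would take a ``very general'' smooth projective $m$-fold $Y$ equipped with an ample line bundle $M$ whose top self-intersection $c_1(M)^m\in\CH^m Y$ is \emph{transcendental}, meaning it is not rationally equivalent to any $0$-cycle supported on a proper closed subvariety of $Y$. Concretely, one may take $Y$ to be an abelian variety of dimension $m$ with sufficiently general moduli and $M$ a polarization; the existence of transcendental classes of this form is a consequence of Mumford's theorem on surfaces with $p_g>0$ and its generalizations to higher dimensions by Roitman, cf.\ \cite[\S5]{Bloch_1989_zero_cycles_num_generators}. Choose a smooth ample effective divisor $D\subset Y$ and set $X:=Y\setminus D$ (smooth affine of dimension $m$) and $L:=M|_X$. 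The localization exact sequence
\begin{equation*}
\CH_0(D)\xrightarrow{i_*}\CH_0(Y)\to\CH^m(X)\to 0
\end{equation*}
of \cite[Prop.~1.8]{Fulton_1998_intersection_theroy_second_ed} shows, via the transcendence condition, that the image of $c_1(M)^m$ in $\CH^m(X)$ is nonzero.

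Next I would descend $(X,L)$ to a pair $(X_0,L_0)$ defined over a finitely generated subfield $k_0\subset\mathbb{C}$ using \cite[Thm.~3.2.1, Thm.~6.4.3]{Poonen_2017_rational_points}; after shrinking, we may assume $X_0$ is smooth affine of dimension $m$ and $L_0$ is a line bundle. Set $t:=\mathrm{trdeg}_\Q(k_0)$. Given any field $k$ of characteristic zero with $\mathrm{trdeg}_\Q(k)\geq t$, fix a $\Q$-algebra embedding $k_0\hookrightarrow k$ and form $X_k:=X_0\times_{k_0}k$ and $L_k:=L_0\otimes_{k_0}k$. To verify $c_1(L_k)^m\neq 0$ in $\CH^m(X_k)$, pick an algebraically closed field $\Omega$ containing both $\mathbb{C}$ and $k$; pulling back to $\CH^m(X_0\otimes_{k_0}\Omega)$, the compatibility of Chern classes with flat pullback \cite[\S52F]{Elman_2008_algebraic_geometric_theory_of_quad_forms} shows that the image is nonzero (being the further pullback of $c_1(L)^m$), and the injectivity of flat pullback on Chow groups of smooth varieties under algebraically closed field extensions (together with a transfer argument in characteristic $0$ to pass from $k$ to $\bar k$) forces $c_1(L_k)^m\neq 0$.

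The main obstacle is the first step: producing, over $\mathbb{C}$, a smooth affine $m$-fold together with a line bundle whose top Chern class survives in $\CH^m$ of the open part. This rests on the deep Mumford-Roitman theory of transcendental zero-cycles and is precisely the content of \cite[\S5]{Bloch_1989_zero_cycles_num_generators}.
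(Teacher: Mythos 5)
Your overall architecture (construct a pair $(X,L)$ with $c_1(L)^m\neq 0$ over $\C$, spread out to a finitely generated $k_0$, set $t=\mathrm{trdeg}_\Q k_0$, then transfer to any $k$ with $\mathrm{trdeg}_\Q k\geq t$ and conclude nonvanishing by pulling back to a big algebraically closed overfield) matches the paper's proof. The gap is in your first step, and it is a real one: the assertion that for a very general polarized $m$-fold $Y$ the class $c_1(M)^m$ is ``transcendental'' (not rationally equivalent to a cycle supported on a proper closed subvariety) does \emph{not} follow from Mumford--Roitman infinite-dimensionality, and it is false in exactly the examples you propose. Infinite-dimensionality of $\CH_0(Y)$ is a statement about the whole group, not about a specific effective class of positive degree. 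On a K3 surface ($p_g>0$, so $\CH_0$ is infinite-dimensional by Mumford), Beauville--Voisin show that the product of any two divisor classes is a multiple of a canonical point class, hence supported on a point. Likewise, for an abelian $m$-fold with a symmetric ample $M$ (and every polarization becomes symmetric after a translation, which does not affect supportedness), Beauville's Fourier-transform decomposition gives $c_1(M)\in\CH^1_{(0)}$ and $\CH^m_{(0)}(Y)\otimes\Q\cong\Q$, generated by the class of the origin; so $c_1(M)^m$ is, up to torsion, a multiple of a single point class and is certainly supported on a proper closed subvariety. Thus your localization argument cannot get off the ground with this $L$, and the step is not ``precisely the content'' of \cite[\S5]{Bloch_1989_zero_cycles_num_generators}: what Bloch--Murthy--Szpiro actually prove (and what the paper quotes, namely the first paragraph of the proof of their Thm.~5.8) is subtler --- on a product of elliptic curves one works with line bundles built from \emph{degree-zero} bundles on the factors (classes living in the higher graded pieces of $\CH_0$), uses Roitman's criterion to show such products cannot all die modulo cycles on the boundary divisor, and then assembles a single line bundle $L$ with $c_1(L)^m\neq 0$ on a dense open affine. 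If you black-box that statement instead of your ``transcendence'' claim, the rest of your argument goes through.

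Two smaller points in your transfer step. First, a $\Q$-embedding $k_0\hookrightarrow k$ need not exist (e.g.\ $k$ purely transcendental while $k_0$ involves algebraic extensions); the paper instead embeds $k_0$ into a \emph{finite} extension $k_1$ of $k$ compatibly with $k_0\subset\C$, and then views $X_0\times_{k_0}k_1$ as a smooth affine $k$-scheme of dimension $m$ (harmless in characteristic $0$). Second, once the base-changed class maps to the known nonzero class over the big algebraically closed field, nonvanishing over $k$ (or $k_1$) is immediate --- a class whose image under a ring homomorphism is nonzero is nonzero --- so the appeal to injectivity of pullback along extensions of algebraically closed fields and a transfer argument is unnecessary.
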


\begin{proof}
	Suppose first that $k$ is algebraically closed and uncountable.
    We will use several results from \cite{Bloch_1989_zero_cycles_num_generators} that are stated over $k=\C$, but as is evident from their proofs, are actually true over any uncountable algebraically closed field of characteristic $0$.
    Following the beginning of   the proof of 
    \cite[Thm.~5.8]{Bloch_1989_zero_cycles_num_generators},
	let $C_1,\dots,C_m$ be elliptic curves over $k$, put $Y=C_1\times\dots\times C_m$ and let
	$X$ be a dense open affine subset of $Y$.
    Then $\HH^0(Y,\Lambda^m\Omega_Y)\neq 0$.
    By \cite[Lem.~5.2]{Bloch_1989_zero_cycles_num_generators},
    this implies that $\CH^m(X)\neq 0$ (our assumptions on $k$
    are needed here). Moreover, $\CH^m(X)$ is clearly
    generated by products of the form $\alpha_1\cdots \alpha_m$
    with  $\alpha_1,\dots,\alpha_m\in \CH^1(X)$ (because this also holds for $Y$).
    Thus, by 
    \cite[Lem.~5.6]{Bloch_1989_zero_cycles_num_generators}, there
    is a line bundle $L$ over $X$ such that $c_1(L)^m\neq 0$.
    It follows that $G_k(0,m)=m$.

	Suppose now that $k$ is uncountable.
	Let $X'$ 		
	and $L'$ be the $X$ and $L$   constructed in the last paragraph for
	the field $\algcl{k}$. As in the proof of   Proposition~\ref{PR:Gk-to-Gkx},
	there is a finite extension $k_1$ of $k$ such that $X'$
	and $L'$ descend to $k_1$, i.e., there
	is a smooth affine $k_1$-scheme $X $ and a line bundle $L $ over $X$
	such that $ X \times_{k_1} \algcl{k}=X'$
	and $L'$ is the pullback of $L$ along $X'\to X$.
	The image of $c_1(L)^m$
	under $\CH^m X\to \CH^m X'$ is $c_1(L')^m\neq 0$, so we must have
	$c_1(L)^m\neq 0$. As $X$ is a smooth affine $k$-scheme of dimension $m$,
	it follows that $G_k(0,m)=m$.
	
	Finally, suppose that $k$ is countable.
	Let $X'$ and $L'$ be the $X$ and $L$   constructed for the field $\C$.
	Again, there is a finitely generated $\Q$-field $k_0$, 
	such that $X'$ descends to a smooth affine $k_0$-scheme
	$X_0$ and $L'$ descends to a line bundle $L_0$ over $X_0$.
	We take $t:=\mathrm{trdeg}_{\Q} k_0$.
	If  $\mathrm{trdeg}_\Q k\geq t=\mathrm{trdeg}_\Q k_0$, then there
	is a   finite extension $k_1$ of $k$
	that is isomorphic to a field lying between $k_0$ and $\C$.
	Identifying $k_1$ with that field, put $X=X_0\times_{k_0} k_1$
	(viewed as a $k$-scheme) and let $L$ to be the pullback of $L_0$
	along $X\to X_0$. As before,  $c_1(L)^m\neq 0$ (because its image in $\CH^m X'$
	is $c_1(L')^m\neq 0$) and $G_k(0,m)=m$. 
\end{proof}

\begin{remark}\label{RM:positive-char}
	The example in \cite[Thm.~5.8]{Bloch_1989_zero_cycles_num_generators} 
	relies on   a criterion of Roitman  \cite[Lem.~5.2]{Bloch_1989_zero_cycles_num_generators}  
	for showing that the group
	of $0$-cycles on a complete $k$-variety $Y$
	is \emph{infinite dimensional} in the sense that there is no closed subvariety
	$Z\subsetneq Y$ such that $\CH_0(Z)\to \CH_0(Y)$ is surjective.
	This criterion, originally proved for $k=\C$, requires $k$ that is uncountable
	algebraically closed 
	of characteristic $0$.
	
	V.\ Srinivas   pointed out to us that Bloch gave an analogue of Roitman's result for
	uncountable fields of positive characteristic 
	using $\ell$-adic cohomology \cite[Thm.~1A.6, Ex.~1A.7]{Bloch_2010_lectures_alg_cycles} (see also \cite[Prop.~1]{Bloch_1983_corres_alg_cycles}).
	Moreover, it seems likely that with some work, this  result
	could be applied to the example in  \cite[Thm.~5.8]{Bloch_1989_zero_cycles_num_generators}.
	We therefore expect that Proposition~\ref{PR:Gk-zero} also holds in positive characteristic.
\end{remark}

\begin{prp}
	\label{PR:Gk-pos-char}
	Let $m\in\N-\{1\}$ and let $p$ be a prime number.
	Then 
	$G_{\overline{\F}_p}(0,m)=m+1$.
\end{prp}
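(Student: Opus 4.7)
The upper bound $G_{\overline{\F}_p}(0,m) \leq m+1$ is a direct instance of Corollary~\ref{CR:Gk-general-upper-bound} applied with $\ell = 0$ (every integer, in particular $0$, has ``a prime divisor greater than $m$'' in the sense of that corollary, which was explicitly noted). So the task reduces to the lower bound $G_{\overline{\F}_p}(0,m) \geq m+1$: I must show that for every smooth affine $\overline{\F}_p$-scheme $X$ of dimension $d \leq m$ and every line bundle $L$ on $X$, $c_1(L)^m = 0$ in $\CH^m X$. The case $d < m$ is automatic. So assume $d = m$; in fact I would prove the stronger assertion $\CH^m X = 0$, which clearly yields $c_1(L)^m = 0$ for every $L$.

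The vanishing $\CH^m X = 0$ combines two complementary inputs. On one hand, Murthy's Theorems 2.11 and 2.14 (invoked already in the proof of Proposition~\ref{PR:Gk-alg-closed-lower-bound}) assert that $\CH^m X$ is torsion-free for smooth affine $m$-dimensional varieties over any algebraically closed field, provided $m \geq 2$. On the other hand, I would show that $\CH^m X = \CH_0 X$ is torsion when the base is $\overline{\F}_p$. To this end, spread $X$ out to a smooth affine scheme $X_0$ over some finite subfield $\F_q \subset \overline{\F}_p$, choose a projective compactification $\bar X_0 \supset X_0$ (appealing to de Jong's alteration to make it smooth projective, losing only controlled torsion information), and consider the excision sequence
\[
\CH_0(\bar X \setminus X) \longrightarrow \CH_0(\bar X) \longrightarrow \CH_0(X) \longrightarrow 0.
\]
Over $\overline{\F}_p = \bigcup_n \F_{p^n}$, the group $A_0(\bar X)$ is a filtered union of the finite groups $A_0(\bar X_0 \times_{\F_q} \F_{p^n})$ (classical finiteness of degree-zero cycles on smooth projective varieties over finite fields), hence torsion. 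Moreover, since $\overline{\F}_p$ is algebraically closed, $\bar X \setminus X$ carries closed points of degree $1$, forcing the degree map on $\CH_0 X$ to be zero. Thus $\CH_0 X$ is a quotient of the torsion group $A_0(\bar X)$, hence torsion. Combining both facts: $\CH^m X$ is simultaneously torsion and torsion-free, so it vanishes, completing the proof.

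The only delicate point is the torsion step, with two minor technical hurdles: obtaining a smooth projective compactification of $X_0$ in positive characteristic (handled by de Jong's alteration, which is enough because we only need the conclusion up to torsion), and carefully tracking the compatibility of spreading out with Chern classes and the excision sequence. Apart from these, the proof is a clean assembly of Murthy's torsion-freeness result with the classical finiteness of zero-cycles over finite fields; the latter is the essential arithmetic input that distinguishes $\overline{\F}_p$ from the fields treated in Proposition~\ref{PR:Gk-zero}, where the analogous statement $\CH^m X = 0$ is known to fail.
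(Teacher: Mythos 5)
Your argument is correct in substance, but it reaches the key vanishing $\CH^m X=0$ by a genuinely different route than the paper. The paper's proof is very short: it cites Mohan Kumar's theorem that every closed point of a smooth affine $\overline{\F}_p$-variety is a complete intersection, and then Murthy's comparison $\CH^m X\cong F^mK_0(A)$ together with the resulting vanishing; no compactification or zero-cycle arithmetic appears. You instead combine Murthy's torsion-freeness of $\CH^m X$ (the same input already used in Proposition~\ref{PR:Gk-alg-closed-lower-bound}, which is where $m\geq 2$ and algebraic closedness enter) with the arithmetic fact that $\CH_0$ of an affine $\overline{\F}_p$-variety is torsion, deduced from the localization sequence and finiteness of degree-zero cycles over finite fields; torsion plus torsion-free then gives zero. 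This works, and it isolates cleanly the arithmetic input distinguishing $\overline{\F}_p$ from the characteristic-zero situation of Proposition~\ref{PR:Gk-zero}; the paper's route, by contrast, is shorter and delegates all the arithmetic to Mohan Kumar's complete-intersection theorem. One caveat about your ``delicate point'': the de Jong step does not do what you say, since an alteration of a compactification is generically finite over it and is not a compactification of $X$ itself, and smooth projective compactifications are not available in positive characteristic in general. But you do not need smoothness (nor the spreading out to $\F_q$, nor the Kato--Saito finiteness theorem): for an arbitrary projective compactification $\bar X\subseteq \bbP^N_{\overline{\F}_p}$ of $X$, the degree-zero part $A_0(\bar X)$ is already torsion, because any difference of closed points lies on an irreducible curve in $\bar X$, lifts to the normalization of that curve, and there lands in $\Pic^0$, i.e.\ in the $\overline{\F}_p$-points of the Jacobian, which form a torsion group. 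With that replacement your localization-sequence argument (boundary nonempty since $X$ is affine of positive dimension, so $\CH_0(X)$ is a quotient of $A_0(\bar X)$) goes through verbatim, and the proof is complete.
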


\begin{proof}
	Since $G_{\overline{\F}_p}(0,m)\leq m+1$ (Corollary~\ref{CR:Gk-general-upper-bound}), 
	it is enough to prove that if $X=\Spec A$ is a smooth affine
	$\overline{\F}_p$-scheme of dimension $m$, then $\CH^m X=0$.
	By 	\cite[Thm.~3.6]{Mohan_1988_cancellation_proj_modules},
	every closed point of $X$ is a complete intersection,
	and 
	by \cite[Cor.~3.4, Thm.~2.14ii)]{Murthy_1994_zero_cycles_and_proj_modules},
	this means that $\CH^m X\cong F^m K_0 (A)=0$.
\end{proof}

\begin{remark}\label{RM:Bloch-Beilinson}
	The characteristic-$0$ counterpart of
	Proposition~\ref{PR:Gk-pos-char}, i.e., whether
	$G_{\algcl{\Q}}(0,m)=m+1$ for all $m\geq 2$,
	is related to the Bloch--Beilinson conjecture.
	Indeed, in some  formulations, e.g.\
	\cite[p.~157]{Krishna_2002_zero_cycles_K_thy_normal_surf} or
	\cite[Cnj.~7.4, Cnj.~7.7]{Srinivas_2008_appl_alg_cycles_to_aff_alg_geo}, the conjecture
	may be stated as follows:
	\begin{enumerate}
		\item[(BB)]
		For every smooth affine $\algcl{\Q}$-variety $X$
		of dimension $m\geq 2$, we have $\CH^m X=0$.
	\end{enumerate}
	If this holds, then we must have $G_{\algcl{\Q}}(0,m)\geq m+1$,
	and then equality holds by Corollary~\ref{CR:Gk-general-upper-bound}.
\end{remark}

We conclude our discussion of $G_k(\ell,m)$ with the following theorem, which implies
Theorem~\ref{TH:bounds-on-generators}.

\begin{thm}\label{TH:Gk-all-bounds}
	Let $k$ be a field and let $\ell,m $ be non-negative integers with $\ell\neq 1$. 
	\begin{enumerate}[label=(\roman*)]
		\item  $G_k(\ell,0)=0$ and $G_k(\ell,1)=1$.
		\item  $G_k(\ell,m) \in\{m,m+1,  m+2\}$.  
		\item  If   $\ell$ is divisible by a prime number greater than
		$m$ (e.g.\ $\ell=0$),
		then   $G_k(\ell,m)\in\{m, m+1\}$.
		\item  If $k$ is algebraically closed, $m>1$ and $\ell>0$, 
		then $G_k(\ell,m)\geq m+1$. Equality holds
		if  $\ell$ is moreover divisible
		by a prime number greater
		than $m$.
		\item  If $\Char k=0$
		and $\mathrm{trdeg}_\Q k$ is infinite,
		then    $G_k(0,m)=m$.
		\item $G_{\quo{\F}_p}(0,m)=m+1$ for every prime $p>0$ and $m\geq 2$.
	\end{enumerate}
\end{thm}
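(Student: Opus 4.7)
The plan is to assemble parts (i)--(vi) from the preceding lemmas, corollaries, and propositions of this section. The universal lower bound $G_k(\ell, m) \geq m$ is immediate from dimension considerations: if $X$ is smooth of dimension less than $m$, each of its (integral) connected components has vanishing top Chow group, so $c_1(L)^m = 0$ for every line bundle $L$. Combined with Corollary~\ref{CR:Gk-general-upper-bound}, this yields (ii) and the upper bound in (iii) at once. Part (iv) follows by pairing the lower bound of Proposition~\ref{PR:Gk-alg-closed-lower-bound} with the upper bound supplied by (iii); part (v) is Proposition~\ref{PR:Gk-zero}; and part (vi) is Proposition~\ref{PR:Gk-pos-char}.

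For (i), the case $m=0$ is handled by taking $X = \Spec k$ and $L = \calO_X$, since $c_1(L)^0 = [X]$ generates $\CH^0 X \cong \Z$. For $m=1$, only the upper bound $G_k(\ell,1) \leq 1$ requires work, amounting to producing a smooth affine $k$-curve carrying a nontrivial $\ell$-torsion line bundle. Whenever $k$ admits a finite field extension of some degree $d \geq 2$ with $d \mid \ell$ --- which covers every non-algebraically-closed $k$ when $\ell = 0$, and every $k$ that fails to be $p$-closed for some prime $p \mid \ell$ when $\ell > 1$ --- I would apply Proposition~\ref{PR:Gk-bound-I}(i) with $Z \subset \bbP^1_k$ the closed point corresponding to an irreducible polynomial of degree $d$; this furnishes a $d$-torsion (hence $\ell$-torsion) line bundle on $\bbP^1_k \setminus Z$ with nonzero first Chern class. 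The remaining subcase is when $k$ is $p$-closed for every prime $p \mid \ell$, most importantly when $k$ is algebraically closed; there I would take $X = E \setminus \{O\}$ for an elliptic curve $E/k$ having a nontrivial $\ell$-torsion $k$-point, using an ordinary curve when $\Char k$ divides $\ell$. Then $\Pic X \cong E(k)$ carries the desired $\ell$-torsion class, which has nonzero first Chern class by construction.

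The main obstacle is verifying that such an elliptic curve really exists in the final subcase above. Over an algebraically closed field this is standard: if $\ell$ is coprime to $\Char k$ then $E[\ell]$ is étale and isomorphic to $(\Z/\ell\Z)^2$, while if $\ell$ is a power of $\Char k$ one picks $E$ ordinary so that the étale part of $E[\ell]$ is nontrivial and defined over $k^{\mathrm{sep}} = k$. For general $p$-closed $k$, one combines this with a short Galois-cohomological bookkeeping to locate an $E/k$ whose $\ell$-torsion is realized rationally; decomposing $\ell$ into prime powers reduces to the prime-power case. Once this step is carried out, (i) is complete and the combination with the earlier results proves the theorem.
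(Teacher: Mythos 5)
Parts (ii)--(vi) and the case $m=0$ of (i) are fine and follow the paper's own route (citing Corollary~\ref{CR:Gk-general-upper-bound} and Propositions~\ref{PR:Gk-alg-closed-lower-bound}, \ref{PR:Gk-zero}, \ref{PR:Gk-pos-char}), but your treatment of $G_k(\ell,1)\leq 1$ has a genuine gap in its final subcase. After disposing of fields admitting a finite extension of degree $d\mid\ell$, $d\geq 2$ (which is correct, via Proposition~\ref{PR:Gk-bound-I}(i) applied to a degree-$d$ closed point of $\bbP^1$), you are left needing the following arithmetic statement: every field $k$ that is ``$p$-closed'' for all primes $p\mid\ell$ carries an elliptic curve with a nontrivial $k$-rational point of order dividing $\ell$. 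This is exactly the hard part, and ``a short Galois-cohomological bookkeeping'' does not deliver it. For a prime $p\mid\ell$, the hypothesis only constrains the degrees of finite extensions of $k$; the Galois image in $\GL_2(\F_p)$ acting on $E[p]$ may then still act without a fixed vector (e.g.\ through a nonsplit Cartan, whose orbits on nonzero vectors have size $p^2-1$, prime to $p$), so no single curve is forced to have rational $p$-torsion, and producing \emph{some} curve with rational $p$-torsion amounts to showing $Y_1(p)(k)\neq\emptyset$ for all such $k$ --- a statement about rational points on positive-genus modular curves that is nowhere justified and is not a formal consequence of the degree restrictions. (The same issue appears in characteristic $p$: for an ordinary curve the \'etale $p$-torsion point is a priori only defined over an extension of degree dividing $p-1$, which your hypothesis does not kill.) There is also a smaller mismatch in the dichotomy itself: ``not $p$-closed'' gives an extension of degree \emph{divisible} by $p$, not one of degree dividing $\ell$; this part is repairable (a degree-$d$ point with $\gcd(d,\ell)>1$ gives $\Pic(\bbP^1- Z)\cong\Z/d$, and a suitable power of $\calO(1)|_X$ is a nonzero $\ell$-torsion class), but the elliptic-curve subcase is not.

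The gap disappears once you notice the freedom built into the definition of $G_k(\ell,m)$: $X$ only needs to be a smooth affine $k$-scheme, so it may be taken smooth affine over a \emph{finite separable} extension of $k$. This is what the paper does: choose a finite separable $k$-field $k_1$ with an ordinary elliptic curve $C_1/k_1$, pass to a further finite separable extension $k_2$ over which an $\ell$-torsion point $P$ becomes rational (possible because the relevant torsion is \'etale), and take $X=(C_1\times_{k_1}k_2)-\{O\}$ with $L$ corresponding to $[P]\in\CH^1X$. No case division and no rationality question over $k$ itself arises. If you want to keep your construction over $k$, you must either prove the existence statement about rational torsion over $p$-closed fields or restructure the argument along these lines.
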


\begin{proof}
	(ii), (iii), (iv), (v), (vi) follow from Corollary~\ref{CR:Gk-general-upper-bound} 
	and Propositions~\ref{PR:Gk-alg-closed-lower-bound}, \ref{PR:Gk-zero}, \ref{PR:Gk-pos-char}.
	As for (i), that $G_k(\ell,0)=0$ is clear --- take $L$ to be the trivial line bundle
	on $\Spec k$.
	To show that $G_k(\ell,1)=1$, note first that
	there is a finite separable $k$-field $k_1$
	and an ordinary elliptic curve $C_1$ over $k_1$.
	(Indeed, all elliptic curves are ordinary in characteristic $0$,
	and if $p=\Char k>0$, then any elliptic curve
	with $j$-invariant not lying in $\F_{p^2}$ is ordinary.)
	Since $C_1$ is ordinary, there is a finite separable
	$k_1$-field $k_2$ and   $P\in C_1(k_2)$ of order
	$\ell$ \cite[\href{https://stacks.math.columbia.edu/tag/03RP}{Tag 03RP}]{stacks_project}.
	Let $C_2=C_1\times_{k_1} k_2$ and let $O$ be the trivial point of $C_2$. 
	Then $X:=C_2-\{O\}$ is  affine and smooth over $k$,
	and $[P]\in \CH^1 X\cong \Pic X$
	is the $1$-st Chern class of a  nontrivial $\ell$-torsion line bundle $L$ over $X$.
\end{proof}

\begin{proof}[Proof of Theorem~\ref{TH:bounds-on-generators}]
	Thanks to Corollary~\ref{CR:gk-leq-Gk} and Lemma~\ref{LM:Forster-bound},
	this follows from  Theorem~\ref{TH:Gk-all-bounds}.
\end{proof}

\section{Main Results}
\label{sec:main-proof}

We finally prove Theorems~\ref{TH:main} and~\ref{TH:main-quantitative}.
We will use the invariant $g_k(\ell,m)$ defined
in Section~\ref{sec:torsion-line-bundles}. 
Recall from Theorem~\ref{TH:bounds-on-generators} that 
$g_k(\ell,m)\in \{m,m+1,m+2\}$ and $g_k(0,m)\in\{m,m+1\}$.

We derive Theorems~\ref{TH:main}
and~\ref{TH:main-quantitative}
from the following theorem.

\begin{thm}\label{TH:explicit-torsor}
	Let $k$ be a field, let $G$ be a closed subgroup of $\uGL_n$ ($n\in\N$),
	let $E \to X$ be a $G$-torsor with $X$
	a quasi-compact $k$-scheme  and let  $V$ be the vector bundle over $X$ corresponding 
	to the $\uGL_n$-torsor $E\times^G \uGL_n\to X$.
	Let $m$ be a non-negative integer such that one of the following holds:
	\begin{enumerate}[label=(\arabic*)]
		\item $m\geq n\spl V$,   or
		\item $m\geq n(\dim X+1)$ and $X$ is quasi-projective over a  field.
	\end{enumerate}
	Suppose further that there is a $k$-field $K$ and $\ell\in\N-\{1\}$
	such that $\umu_{\ell,K}$ embeds in $G_K$.
	Then there is a smooth affine $K$-scheme $X'$ of dimension $g_K(\ell,m)$
	carrying a $G$-torsor
	that is not a specialization of $E\to X$.
\end{thm}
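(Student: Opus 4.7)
The plan is to produce $X'$ and $E' \to X'$ explicitly using the invariant $g_K(\ell,m)$ and Kummer theory, then derive a contradiction from any hypothetical specialization using Theorem~\ref{TH:reduction-to-line-bundles}.

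To begin, use the very definition of $g_K(\ell,m)$ to choose a smooth affine $K$-scheme $X'$ of dimension $g_K(\ell,m)$ carrying an $\ell$-torsion line bundle $L$ with $\gen(L) > m$. The Kummer sequence on $(\Sch/K)_\fppf$ then lets one pick a $\umu_{\ell,K}$-torsor $M' \to X'$ whose associated line bundle is $L$, and one pushes it forward along the given embedding $\umu_{\ell,K} \embeds G_K$ to form
\[
E' := M' \times^{\umu_\ell} G_K \longrightarrow X',
\]
viewed as a $G$-torsor over $k$ via the structure morphism $X' \to \Spec K \to \Spec k$.

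Next, suppose for contradiction that $E \to X$ specializes to $E' \to X'$, via a $k$-morphism $f\colon X' \to X$. Since $X'$ is a $K$-scheme, $f$ factors uniquely as $X' \xrightarrow{f_K} X_K \to X$, and base change gives $f_K^*(E_K) \cong E'$ as $G_K$-torsors. Applying Theorem~\ref{TH:reduction-to-line-bundles}(i) over the field $K$ --- with data $G_K \embeds \uGL_{n,K}$, $\umu_{\ell,K} \embeds G_K$, the $G_K$-torsor $E_K \to X_K$ whose associated bundle is $V_K$, and the $\umu_{\ell,K}$-torsor $M'$ --- then yields
\[
\gen(L) \;\le\; n \cdot \spl(V_K) \;\le\; n \cdot \spl(V),
\]
the last inequality coming from the fact that any open covering of $X$ splitting $V$ base-changes to one of $X_K$ splitting $V_K$. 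Under hypothesis (1), $n \cdot \spl(V) \le m$ by assumption; under hypothesis (2), $X$ is quasi-projective over a field, so Proposition~\ref{PR:bound-on-splitting-number} gives $\spl(V) \le \dim X + 1$, hence $n \cdot \spl(V) \le n(\dim X + 1) \le m$. Either way $\gen(L) \le m$, contradicting $\gen(L) > m$.

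The main technical obstacle is that Theorem~\ref{TH:reduction-to-line-bundles} is stated only for \emph{prime} $\ell$. For composite $\ell$ one may either replace $\ell$ by a prime divisor $p \mid \ell$ using $\umu_{p,K} \embeds \umu_{\ell,K} \embeds G_K$ (at the cost of replacing $g_K(\ell,m)$ by the potentially larger $g_K(p,m)$), or re-run the proof of Theorem~\ref{TH:reduction-to-line-bundles}: after diagonalizing the representation $\umu_{\ell,K} \to \uGL_{n,K}$ with weights $a_1,\dots,a_n$ satisfying $\gcd(a_1,\dots,a_n,\ell) = 1$, the vector bundle associated with $E'$ becomes $\bigoplus_i L^{\otimes a_i}$, and choosing $b_i \in \Z$ with $\sum_i a_i b_i \equiv 1 \pmod \ell$ and combining the summand bounds $\gen(L^{\otimes a_i}) \le n\cdot\spl(V)$ with Swan's multiplicativity $\gen(M \otimes N) \le \gen(M)\gen(N)$ recovers a bound on $\gen(L)$ sufficient to reach the desired contradiction.
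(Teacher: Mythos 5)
Your proposal follows the paper's proof essentially verbatim: choose $X'$ and an $\ell$-torsion line bundle $L$ with $\gen(L)>m$ from the definition of $g_K(\ell,m)$, lift $L$ to a $\umu_{\ell,K}$-torsor, push forward to a $G$-torsor $E'\to X'$, and rule out any specialization by base-changing to $K$ and invoking Theorem~\ref{TH:reduction-to-line-bundles}(i) together with $\spl(V_K)\le \spl(V)$, with case (2) reduced to case (1) via Proposition~\ref{PR:bound-on-splitting-number}. The only divergence is your closing remark about composite $\ell$: the concern is legitimate (the paper cites Theorem~\ref{TH:reduction-to-line-bundles}(i), stated for prime $\ell$, without comment), but your Swan-type patch only yields $\gen(L)\le (n\spl(V))^r$, which contradicts $\gen(L)>m\ge n\spl(V)$ only when $r=1$, i.e.\ when some weight of $\umu_{\ell,K}$ in $\uGL_{n,K}$ is a unit modulo $\ell$; likewise, passing to a prime divisor $p\mid\ell$ produces a scheme of dimension $g_K(p,m)$, which may exceed the stated $g_K(\ell,m)$. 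In all of the paper's applications $\ell$ is (or can be taken) prime, so nothing downstream is affected, but as written neither your patch nor the paper's citation fully handles, say, $\umu_6\embeds\uGL_2$ acting with weights $2$ and $3$.
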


\begin{proof}
	Assumption (2) implies (1) by Proposition~\ref{PR:bound-on-splitting-number}.
	By the definition of $g:=g_K(\ell,m)$,
	there is a $g$-dimensional smooth affine $K$-scheme $X'$ carrying an $\ell$-torsion
	line bundle $L'$ such that $\gen(L')>m$.
	For the sake of contradiction, suppose that $E\to X$ specializes
	to every $G$-torsor over $X'$, i.e., $E\to X$ is weakly versal for the
	class of $k$-schemes $\{X'\}$.
	Then by \cite[Prop.~5.5]{First_2023_highly_versal_torsors},
	the $G_K$-torsor $E_K\to X_K$ is weakly versal for the class of $K$-schemes
	$\{X'\}$. Since $G_K$ contains a copy of $\umu_{\ell,K}$, 
	Theorem~\ref{TH:reduction-to-line-bundles}(i) tells
	us that every $\ell$-torsion line bundle over $X'$   can be generated by $n\spl (V_K)\leq n\spl V$ elements. 
	But that is absurd because
	$L'$ was chosen so that    $\gen(L')>m\geq n\spl V$.
	We therefore conclude that $E\to X$ does not specialize to all $G$-torsors over $X'$.
\end{proof}

We also have the following variant of Theorem~\ref{TH:explicit-torsor} which gives
a slightly better upper bound on $\dim X'$ when $G_K$ contains a copy of $\nGm{K}$.

\begin{thm}\label{TH:explicit-torsor-torus}
	Let $k$, $n$, $G$, $E\to X$, $V$ and $m$ be as in Theorem~\ref{TH:explicit-torsor}.
	Suppose further that there is a $k$-field $K$  
	such that $\nGm{K}$ embeds in $G_K$ and the characters of $\nGm{K}$ occurring
	in the representation $\nGm{K}\to G_K\to\uGL_{n,K}$ include
	the identity character or $x\mapsto x^{-1}$.
	Then there is a smooth affine $K$-scheme $X'$ of dimension $g_K(0,m)$
	carrying a $G$-torsor
	$E'\to X'$ that is not a specialization of $E\to X$.
\end{thm}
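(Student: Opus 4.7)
The plan is to follow the same strategy as the proof of Theorem~\ref{TH:explicit-torsor}, with the only change being to invoke Theorem~\ref{TH:reduction-to-line-bundles-torsionless} in place of Theorem~\ref{TH:reduction-to-line-bundles}. The crux is to observe that the hypothesis on the characters of $\nGm{K}$ collapses the exponent $r$ appearing in Theorem~\ref{TH:reduction-to-line-bundles-torsionless} to $1$, yielding a linear bound on $\gen(L')$ analogous to the one used in Theorem~\ref{TH:explicit-torsor}.

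In detail, assumption (2) implies (1) by Proposition~\ref{PR:bound-on-splitting-number}, so we may assume $m \geq n \spl V$. Set $g := g_K(0,m)$. By the definition of $g_K(0,m)$, there exists a smooth affine $K$-scheme $X'$ of dimension $g$ carrying a (not necessarily torsion) line bundle $L'$ with $\gen(L') > m$. Let $M' \to X'$ be the $\nGm{K}$-torsor whose associated line bundle is $L'$, and let $E' = M' \times^{\nGm{K}} G_K$, which we regard either as a $G_K$-torsor or, by restriction of scalars, as a $G$-torsor.

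Suppose for contradiction that $E \to X$ specializes to $E' \to X'$ as a $G$-torsor. Then $E_K \to X_K$ specializes to $E' \to X'$ as a $G_K$-torsor, and Theorem~\ref{TH:reduction-to-line-bundles-torsionless}(i), applied to the embedding $\nGm{K} \embeds G_K$ and to the representation $\nGm{K} \to G_K \to \uGL_{n,K}$ with character exponents $a_1, \dots, a_n$, gives
\[
\gen(L') \leq (n\spl V_K)^r \leq (n\spl V)^r,
\]
where $r = \min|S|$ over nonempty $S \subseteq \{a_1,\dots,a_n\}$ with $\gcd(S)=1$.

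The key observation is that by hypothesis some $a_i \in \{1,-1\}$, so the singleton $S = \{a_i\}$ has $\gcd(S)=1$, forcing $r = 1$. Hence $\gen(L') \leq n\spl V \leq m$, contradicting $\gen(L') > m$. This contradiction shows that $E \to X$ does not specialize to $E' \to X'$, as required. The only obstacle is the conceptual one of recognizing that the character hypothesis is precisely what makes the bound from Theorem~\ref{TH:reduction-to-line-bundles-torsionless} as sharp as the $\umu_\ell$-bound; beyond that, the argument is mechanical.
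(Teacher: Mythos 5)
Your proof is correct and follows exactly the route the paper takes: repeat the argument of Theorem~\ref{TH:explicit-torsor} with a $\nGm{K}$-torsor in place of the $\umu_{\ell,K}$-torsor and Theorem~\ref{TH:reduction-to-line-bundles-torsionless} in place of Theorem~\ref{TH:reduction-to-line-bundles}. Your explicit observation that the character hypothesis forces $r=1$, turning the bound $(n\spl V)^r$ into $n\spl V\leq m$, is precisely the point left implicit in the paper's one-line proof.
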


\begin{proof}
	This is similar to the proof of Theorem~\ref{TH:explicit-torsor},
	except $L'$ is not necessarily torsion and
	one uses part (ii) of Theorem~\ref{TH:reduction-to-line-bundles}
	instead of part (i).
\end{proof}

We can now prove Theorems~\ref{TH:main}
and~\ref{TH:main-quantitative}  as well as a few   improvements.

\begin{proof}[Proof of Theorem~\ref{TH:main}]
	The implication (b)$\implies$(a) is clear
	and (c)$\implies$(b) is a special case of \cite[Thm.~10.1]{First_2023_highly_versal_torsors}. It remains to show that
	(a)$\implies$(c), that is,
	we need to show that if $X$ is a quasi-compact $k$-scheme
	and $E\to X$ is a $G$-torsor that is weakly  versal for all   finite type regular affine $k$-schemes,
	then $G$ is unipotent.
	
	For the sake of contradiction, suppose that
	$G$ is not unipotent. 
	By  Theorem~\ref{unipiffnomultsubgp}(ii), $G_{\algcl{k}}$ contains a copy
	of $\umu_{\ell,\algcl{k}}$ for some $\ell>1$.
	This means that there is a finite-dimensional $k$-field
	$K$
	such that  $G_{K}$ contains a copy of $\umu_{\ell,K}$.
	Theorem~\ref{TH:explicit-torsor} now gives us
	a $G$-torsor $E'\to X'$, where $X'$ is a smooth affine   $K$-scheme,
	which is not a specialization of $E\to X$. Since $X'$ is regular affine
	and also  of finite type over $k$,
	this contradicts our assumption on $E\to X$. We conclude that $G$ must be unipotent.
\end{proof}

\begin{thm}\label{TH:main-special-case}
	Let $k$ be a field and $G$ be an affine algebraic $k$-group.
	Suppose that $G$ is connected,   or $k$ is perfect.
	Then the   conditions of Theorem~\ref{TH:main}	
	are also equivalent to: 
	\begin{enumerate} 
		\item[(a${}^+$)]
		there exists a $G$-torsor over a quasi-compact
		$k$-scheme that is weakly versal for all \emph{smooth} affine $k$-schemes.
	\end{enumerate}
\end{thm}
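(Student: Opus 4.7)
The plan is to show that $(a^+)$ joins the equivalent conditions of Theorem~\ref{TH:main}. Two implications will be essentially free: $(a) \Rightarrow (a^+)$ because any smooth $k$-scheme is regular and of finite type over $k$, so the class of smooth affine $k$-schemes is contained in the class of finite type regular affine $k$-schemes, and $(c) \Rightarrow (b) \Rightarrow (a^+)$ is already contained in Theorem~\ref{TH:main}. The only substantive task is to prove $(a^+) \Rightarrow (c)$, and I would do so by adapting the argument for $(a) \Rightarrow (c)$ in Theorem~\ref{TH:main}, adding one ingredient: the finite field extension $K/k$ over which we produce a copy of $\umu_\ell$ inside $G$ must be taken to be \emph{separable}, so that a smooth affine $K$-scheme is automatically a smooth affine $k$-scheme.

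Suppose then, for contradiction, that $E \to X$ is a $G$-torsor over a quasi-compact $k$-scheme, weakly versal for all smooth affine $k$-schemes, yet $G$ is not unipotent. My aim would be to construct a finite separable extension $K/k$ together with an embedding $\umu_{\ell,K} \hookrightarrow G_K$ for some $\ell > 1$, splitting into two cases according to the two hypotheses. If $G$ is connected, Theorem~\ref{unipiffnomultsubgp}(i) supplies a nontrivial multiplicative $k$-subgroup $M \leq G$; any group of multiplicative type splits over a finite separable extension of its field of definition, so for $K/k$ such a splitting field, $M_K$ is diagonalizable and nontrivial, and therefore contains some $\umu_{\ell,K}$ with $\ell > 1$. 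If $k$ is perfect, I would apply Theorem~\ref{unipiffnomultsubgp}(ii) to $G_{\algcl{k}}$ to obtain a monomorphism $\umu_{\ell,\algcl{k}} \hookrightarrow G_{\algcl{k}}$ with $\ell > 1$, and then spread out: the morphism is determined by finitely many elements of $\algcl{k}$, which generate a finitely generated $k$-subalgebra of $\algcl{k}$, necessarily algebraic and hence finite over $k$; its fraction field is a finite subextension $K/k$ of $\algcl{k}/k$, which is automatically separable since $k$ is perfect.

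Once such $K/k$ is fixed, Theorem~\ref{TH:explicit-torsor} produces a smooth affine $K$-scheme $X'$ carrying a $G$-torsor $E' \to X'$ that is not a specialization of $E \to X$. Since $K/k$ is finite separable, $\Spec K \to \Spec k$ is étale and hence smooth, so the composition $X' \to \Spec K \to \Spec k$ is smooth; as $X'$ is also affine over $k$, it qualifies as a smooth affine $k$-scheme. This contradicts the assumed weak versality of $E \to X$ for smooth affine $k$-schemes, and forces $G$ to be unipotent. The one delicate point, relative to the proof of Theorem~\ref{TH:main}, is the separability of $K/k$, which is precisely where the two hypotheses enter; I expect this to be the main (and only) obstacle.
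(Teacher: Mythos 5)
Your proposal is correct and follows essentially the same route as the paper: reduce to $(a^+)\Rightarrow(c)$, rerun the argument of Theorem~\ref{TH:main} via Theorem~\ref{TH:explicit-torsor}, and observe that the finite $k$-field $K$ carrying a copy of $\umu_{\ell,K}$ in $G_K$ can be chosen separable — using Theorem~\ref{unipiffnomultsubgp}(i) and a splitting field of the multiplicative subgroup when $G$ is connected, and the automatic separability of finite extensions when $k$ is perfect — so that the resulting smooth affine $K$-scheme $X'$ is also smooth over $k$.
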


\begin{proof}
	As in the proof of Theorem~\ref{TH:main}, it is enough
	to prove that (a${}^+$)  implies that $G$ is   unipotent.
	Suppose that (a${}^+$) holds  but $G$ is not unipotent.
	Then, as in the proof of Theorem~\ref{TH:main},
	given a finite-dimensional $k$-field $K$
	such that $G_{K}$ contains a copy of
	$\umu_{\ell,K}$ with $\ell>1$,
	we could use 
	Theorem~\ref{TH:explicit-torsor} to get
	a smooth affine $K$-scheme $X'$
	and a $G$-torsor $E'\to X'$ that is not a specialization of
	$E\to X$. 
	We will reach the desired contradiction by showing that $K$ can be taken
	to be separable over $k$, and thus $X'$ is smooth over $k$.

	When $k$ is perfect, we could simply take $K$ to be the $k$-field
	from the proof Theorem~\ref{TH:main}.
	If $G$ is connected, then Theorem~\ref{unipiffnomultsubgp}(i)
	tells us that $G$ contains a nontrivial multiplicative group $M$.
	In this case,  take $K$ to be a finite separable $k$-field
	such that $M_{K}$ is diagonalizable.
\end{proof}

Theorem~\ref{TH:main-quantitative} is part (i) of  the following theorem.

\begin{thm}\label{TH:main-quantitative-speical-cases}
	Let $G$ be a non-unipotent algebraic  subgroup of the algebraic $k$-group $\uGL_n$,
	let $X$ be a $d$-dimensional $k$-scheme that is quasi-projective over a field 
	and let $E\to X$ be a $G$-torsor. Then:
	\begin{enumerate}[label=(\roman*)]
		\item $E\to X$ is not weakly versal for the class of
		finite type regular affine $k$-schemes of dimension $\leq  n(d+1)+2$.
		\item If $k$ is perfect or $G$ is connected,
		then $E\to X$ is not weakly versal for the class of  smooth  affine $k$-schemes
		of dimension $\leq n(d+1)+2$.
		\item If $G$ contains a nontrivial torus,
		then $E\to X$ is not weakly versal for the class of {smooth} affine $k$-schemes
		of dimension $\leq n(d+1)+1$.
		\item If 
		$\Char k=0$,
		$\mathrm{trdeg}_\Q k$ is infinite
		and $G_{\algcl{k}}$ contains a copy of $\nGm{\algcl{k}}$ such that
		characters of the representation $\nGm{\algcl{k}}\to G_{\algcl{k}}\to \uGL_{n,\algcl{k}}$
		include the identity character or $x\mapsto x^{-1}$,
		then $E\to X$ is not weakly versal for the class of {smooth} affine $k$-schemes
		of dimension $\leq n(d+1)$.
	\end{enumerate}
\end{thm}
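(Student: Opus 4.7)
My plan is to obtain each of the four statements as a direct application of Theorem~\ref{TH:explicit-torsor} (for parts (i)--(iii)) or Theorem~\ref{TH:explicit-torsor-torus} (for part (iv)) with $m := n(d+1)$, so that hypothesis (2) of those theorems is satisfied by virtue of $X$ being quasi-projective over a field. What will change from one part to the next is only the choice of auxiliary $k$-field $K$, the integer $\ell$ (or the decision to use the $\uGm$-version), and consequently the bound on $g_K(\ell,m)$ or $g_K(0,m)$ extracted from Theorem~\ref{TH:bounds-on-generators}.

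For part (i), I would use Theorem~\ref{unipiffnomultsubgp}(ii) to find $\ell>1$ and an embedding $\umu_{\ell,\algcl{k}}\hookrightarrow G_{\algcl{k}}$; such an embedding, being a morphism of finite-type schemes, descends to a finite extension $K/k$. Theorem~\ref{TH:explicit-torsor} then produces a $G$-torsor $E'\to X'$ not specializing from $E\to X$, with $X'$ smooth affine over $K$ (hence regular and of finite type over $k$) of dimension $g_K(\ell,n(d+1))\leq n(d+1)+2$, the last inequality coming from Theorem~\ref{TH:bounds-on-generators}(ii). For part (ii), I would rerun the same argument with $K$ chosen to be separable over $k$, so that $X'$ becomes smooth over $k$: when $k$ is perfect separability is automatic, while when $G$ is connected I would invoke Theorem~\ref{unipiffnomultsubgp}(i) to find a nontrivial multiplicative subgroup $M\subseteq G$ and take $K$ to be a finite separable extension splitting $M$, so that $G_K\supseteq M_K\supseteq\umu_{\ell,K}$ for some $\ell>1$.

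For part (iii), the hypothesis that $G$ contains a nontrivial torus $T$ lets me pass to a finite separable $K/k$ over which $T$ splits; then $\umu_{\ell,K}\hookrightarrow \nGm{K}\hookrightarrow T_K\hookrightarrow G_K$ for every $\ell>1$, and I am free to choose $\ell$ to be a prime strictly greater than $m=n(d+1)$. Part (iii) of Theorem~\ref{TH:bounds-on-generators} then sharpens the bound to $g_K(\ell,n(d+1))\leq n(d+1)+1$. For part (iv), I would apply Theorem~\ref{TH:explicit-torsor-torus} instead: the embedding $\nGm{\algcl{k}}\hookrightarrow G_{\algcl{k}}$ together with its character data descends to a finite extension $K_0/k$, which is automatically separable in characteristic $0$, and $\mathrm{trdeg}_\Q K_0=\infty$ since $K_0/k$ is finite; Theorem~\ref{TH:bounds-on-generators}(v) then gives $g_{K_0}(0,n(d+1))=n(d+1)$, which is the advertised bound.

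The only mildly delicate point I anticipate is verifying that the embeddings of $\umu_\ell$, of a multiplicative subgroup $M$, and of $\nGm$ (together with its prescribed character data in (iv)) all descend from $\algcl{k}$ to a finite --- and, in (ii)--(iv), separable --- extension of $k$. This is routine spreading-out for finite-type group-scheme data, but it is the one step where care is needed in order for $X'$ to be smooth, rather than merely regular, over $k$.
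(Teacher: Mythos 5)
Your proposal is correct and follows essentially the same route as the paper: apply Theorem~\ref{TH:explicit-torsor} (or Theorem~\ref{TH:explicit-torsor-torus} for (iv)) with $m=n(d+1)$, choosing the auxiliary field $K$ via Theorem~\ref{unipiffnomultsubgp} (with $K/k$ finite separable in (ii)--(iv), exactly as in the proofs of Theorems~\ref{TH:main} and~\ref{TH:main-special-case}) and reading off the dimension bounds from Theorem~\ref{TH:bounds-on-generators}. The descent/spreading-out step you flag is handled the same way in the paper and presents no difficulty.
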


\begin{proof}
	(i) 
	By Theorem~\ref{unipiffnomultsubgp}(ii), there is a
	a finite dimensional $k$-field $K$ such that $G_{K}$
	contains a copy of $\umu_{\ell,K}$ for some $\ell>1$.
	Put $m=n(d+1)$ and let $E'\to X'$ be the $G$-torsor guaranteed in Theorem~\ref{TH:explicit-torsor}.
	Then $E\to X$ does not specialize to $E'\to X'$.
	Since $\dim X'=g_K(\ell,m)\leq m+2$, we conclude that $E\to X$ is not weakly
	versal for   finite type regular affine $k$-schemes of dimension $\leq n(d+1)+2$.

	(ii) As in the proof of Theorem~\ref{TH:main-special-case},
	there is a separable finite-dimensional $k$-field $K$
	such that $G_{K}$ contains a copy of $\umu_{\ell,K}$
	for some $\ell>1$. Proceed as in (i)
	using this $K$. The scheme $X'$ is smooth over $k$ because it is smooth
	over $K$ and $K$ is smooth over $k$.
	
	(iii) Let $T$ be a nontrivial subtorus of $G$
	and let $K$ be a finite separable $k$-field splitting $T$.
	Then $G_K$ contains a copy $\nGm{K}$. Let $\ell$ be a prime number
	greater than $m:=n(d+1)$. Then $G_K$
	contains a copy of 
	of $\umu_{\ell,K}$. Proceed as in (i), but note that $g_K(\ell,m)\leq m+1$
	by Theorem~\ref{TH:bounds-on-generators}(iii).
	
	(iv) The assumption on $G$
	implies that there is a finite separable $k$-field $K$ such that 
	$G_K$ contains a copy of $\nGm{K}$
	and the characters of the representation $\nGm{K}\to G_K\to \uGL_{n,K}$
	include $x\mapsto x$ or $x\mapsto x^{-1}$. 
	We now argue as in (i) using Theorem~\ref{TH:explicit-torsor-torus}
	instead of Theorem~\ref{TH:explicit-torsor}. Theorem~\ref{TH:bounds-on-generators}(v)
	tells us that $g_K(0,m)=m$.
\end{proof}

An immediate application of Theorem~\ref{TH:main-quantitative-speical-cases} is the following.

\begin{cor}\label{CR:there-are-nontriv-torsors}
	Let $G$ be a non-unipotent algebraic subgroup of the algebraic $k$-group $\uGL_n$.
	Then  there exists  a finite type regular affine $k$-scheme 
	of dimension $n+2$
	carrying 
	a non-trivial $G$-torsor $E\to X$. If $k$ is perfect or $G$ is connected,
	then $X$ can be taken to be smooth as well.
\end{cor}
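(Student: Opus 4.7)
The plan is to invoke Theorem~\ref{TH:main-quantitative-speical-cases} in the simplest possible setting: the trivial $G$-torsor $G \to \Spec k$. Taking $X = \Spec k$ (which is quasi-projective over the field $k$) gives $d = \dim X = 0$, so part (i) of that theorem will yield that this trivial torsor fails to be weakly versal for the class of finite type regular affine $k$-schemes of dimension at most $n(0+1)+2 = n+2$.

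Unpacking weak versality failure, there must exist a finite type regular affine $k$-scheme $X'$ of dimension $\leq n+2$ and a $G$-torsor $E' \to X'$ which is not a specialization of $G \to \Spec k$. The short observation I would record next is that every specialization of the trivial torsor $G \to \Spec k$ along a morphism $f\colon X' \to \Spec k$ is again trivial, since $f^*(G \to \Spec k) \cong X' \times_k G \to X'$; this is immediate from the pullback definition. Hence nonspecialization forces $E' \to X'$ to be nontrivial, establishing the first assertion. The smooth refinement, under the assumption that $k$ is perfect or $G$ is connected, is obtained identically by invoking part (ii) of the theorem in place of part (i).

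The corollary's dimension is stated as exactly $n+2$, whereas the theorem delivers $\leq n+2$; if equality is wanted, I would simply pull $E'$ back along the projection $X' \times_k \bbA^{n+2-\dim X'} \to X'$, which preserves (finite type) regularity or smoothness, affineness, and nontriviality of the torsor (a trivialization downstairs would pull back to a trivialization upstairs, and one could restrict to any $k$-fiber to recover $E'$). There is no real obstacle here: the argument is a one-step deduction from Theorem~\ref{TH:main-quantitative-speical-cases}, and no further Chow-class or line-bundle machinery needs to be redeveloped.
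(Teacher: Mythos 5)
Your proposal is correct and is essentially the paper's own proof: the paper likewise applies Theorem~\ref{TH:main-quantitative-speical-cases} to the trivial torsor $G\to\Spec k$ and uses that any specialization of a trivial torsor is trivial. The extra padding step (pulling back along $X'\times_k\bbA^{n+2-\dim X'}\to X'$ and restricting to the zero section to preserve nontriviality) is a harmless refinement addressing the ``exactly $n+2$'' reading that the paper leaves implicit.
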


\begin{proof}
	Apply Theorem~\ref{TH:main-quantitative-speical-cases} to the $G$-torsor $G\to \Spec k$.
\end{proof}

As we noted in the introduction, this result can be significantly improved
in terms of dimension when $G$ contains a nontrivial torus; this will be shown
in the next section.
Another way to strengthen Corollary~\ref{CR:there-are-nontriv-torsors}
is the following.

\begin{cor}\label{CR:nontrivial-torsor-after-ext}
	Let $H$ be an algebraic subgroup of the algebraic $k$-group $\uGL_n$  
	and let $G$ be a non-unipotent  algebraic subgroup of $H$.
	Then there exists a $G$-torsor $E\to X$, with $X$ a finite type regular affine $k$-scheme 
	such that the $H$-torsor $E\times^GH\to X$ is   nontrivial.
	Furthermore, $X$ can be taken so that $\dim X\leq  n(\dim H-\dim G+1)+2$.
\end{cor}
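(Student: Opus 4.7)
The strategy is to invoke Theorem~\ref{TH:explicit-torsor} applied to the tautological $G$-torsor $H\to H/G$. First, I would establish the following dictionary: a $G$-torsor $F\to Y$ has trivial induced $H$-torsor $F\times^G H$ if and only if $F\to Y$ arises as a pullback of $H\to H/G$ along some $k$-morphism $Y\to H/G$. This is a direct consequence of the exact sequence of pointed sets
\[
H(Y)\to (H/G)(Y)\to \HH^1_\fppf(Y,G)\to \HH^1_\fppf(Y,H),
\]
induced by the quotient sheaf $H/G$: the $G$-torsors killed in $\HH^1_\fppf(Y,H)$ are precisely those in the image of the connecting map from $(H/G)(Y)$, i.e.\ those obtained by pulling back the $G$-torsor $H\to H/G$.

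Next, I would verify that the hypotheses of Theorem~\ref{TH:explicit-torsor} are met by $E\to X := H\to H/G$. Since $H\subseteq \uGL_n$ is affine, $H/G$ is quasi-projective over $k$ of dimension $d=\dim H-\dim G$, so condition~(2) applies. Because $G$ is non-unipotent, Theorem~\ref{unipiffnomultsubgp}(ii) supplies an integer $\ell>1$ and a finite-dimensional $k$-field $K$ with $\umu_{\ell,K}\hookrightarrow G_K$. Taking $m=n(d+1)=n(\dim H-\dim G+1)$, Theorem~\ref{TH:explicit-torsor} then produces a smooth affine $K$-scheme $X'$ of dimension $g_K(\ell,m)$ carrying a $G$-torsor $E'\to X'$ that is \emph{not} a specialization of $H\to H/G$. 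By the dictionary of the first paragraph, this says exactly that $E'\times^G H\to X'$ is a nontrivial $H$-torsor.

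It remains to verify the dimension bound and that $X'$ qualifies as a finite type regular affine $k$-scheme. The bound $g_K(\ell,m)\leq m+2$ from Theorem~\ref{TH:bounds-on-generators}(ii) gives $\dim X'\leq n(\dim H-\dim G+1)+2$. Since $K/k$ is finite, $X'$ is of finite type over $k$; affineness is inherited, and smoothness over $K$ forces regularity of the local rings, which is intrinsic and hence independent of the chosen base. The main conceptual step is the dictionary in the first paragraph, but it is a routine unraveling of non-abelian cohomology; no serious obstacle is expected, and the proof amounts to concatenating this observation with the already-established Theorems~\ref{unipiffnomultsubgp}, \ref{TH:bounds-on-generators}, and~\ref{TH:explicit-torsor}.
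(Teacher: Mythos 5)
Your proposal is correct and follows essentially the same route as the paper: the paper likewise uses the exact sequence $(H/G)(Y)\to \HH^1_\fppf(Y,G)\to \HH^1_\fppf(Y,H)$ to reduce nontriviality of $E\times^G H$ to non-specialization from the $G$-torsor $H\to H/G$, and then applies the quantitative non-versality result (Theorem~\ref{TH:main-quantitative-speical-cases}(i), whose proof is exactly your direct application of Theorems~\ref{unipiffnomultsubgp} and~\ref{TH:explicit-torsor} with $m=n(\dim H-\dim G+1)$ and the bound $g_K(\ell,m)\le m+2$). The only difference is that you inline that theorem rather than cite it, which is not a substantive deviation.
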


The corollary is obtained by applying Theorem~\ref{TH:main-quantitative-speical-cases} to
the $G$-torsor $H\to H/G$, so improvements analogous to parts (ii), (iii), (iv) of Theorem~\ref{TH:main-quantitative-speical-cases} hold. We leave it to the reader to work them out explicitly.

\begin{proof}
	Let $X$ be a $k$-scheme. It is well-known that there
	is a short exact sequence of pointed sets
	\[
	(H/G)(X)\xrightarrow{\alpha} \mathrm{H}^1_{\fppf}(X,G)\xrightarrow{\beta}\mathrm{H}^1_{\fppf}(X,H)
	\]
	in which $\alpha$ takes an $X$-point $p:X\to H/G$
	to the cohomology class of the $G$-torsor obtained by base-changing
	$H\to H/G$ along $p$, and $\beta$ takes the cohomology class of a $G$-torsor
	$E\to X$ to the cohomology class of $E\times^GH\to X$.
	From the sequence we see that every $G$-torsor $E\to X$
	such that $E\times^G H\to X$ is trivial is a specialization of the $G$-torsor
	$H\to H/G$. Thus, for any $G$-torsor $E\to X$ that is a not a specialization
	of $H\to H/G$, the extension $E\times^G H\to X$ is a nontrivial $H$-torsor.
	With this observation at hand, the corollary follows by applying Theorem~\ref{TH:main-quantitative-speical-cases} to the $G$-torsor $H\to H/G$. (Note that $H/G$ is quasi-projective.)
\end{proof}

\section{Non-Trivial Torsors}
\label{sec:nontriv}

We finish with using results from Sections~\ref{sec:reduction} 
and~\ref{sec:torsion-line-bundles}  to prove
Theorem~\ref{TH:non-triv-torsors-main}.
This will improve
Corollary~\ref{CR:nontrivial-torsor-after-ext} for algebraic groups containing a nontrivial torus. 

We will derive Theorem~\ref{TH:non-triv-torsors-main}
from a slightly more general result. To phrase it, we introduce a variation  of the invariant $G_k(\ell,m)$
considered in Section~\ref{sec:torsion-line-bundles}. For every $m\in \N\cup\{0\}$,
let
\[
\tilde{G}_k(m)
\]
denote the smallest   $d\in\N\cup\{0\}$ such that for every $n\in\N$, there is a smooth affine
$d$-dimensional $k$-scheme and a line bundle $L$ such that $n \cdot c_1(L)^m\neq 0$
in $\CH^m X$.

\begin{thm}\label{TH:there-are-better-nontriv-torsors}
	Let $G$ be an affine algebraic group over $k$ containing a nontrivial torus (resp.\ central torus).
	Then there exists a smooth affine $k$-scheme of dimension at most $ \tilde{G}_k(2)$
	(resp.\ $ \tilde{G}_k(1)$)
	carrying a non-trivial $G$-torsor.
\end{thm}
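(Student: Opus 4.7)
The plan is to construct a nontrivial $G$-torsor of the form $E=M\times^{\nGm{K},\lambda}G_K$, where $M$ is a $\nGm{K}$-torsor over a smooth affine scheme $X_K$, $\lambda:\nGm{K}\hookrightarrow T_K\hookrightarrow G_K$ is built from a splitting of $T$ over a finite separable extension $K/k$, and $X_K=X\times_k K$ is the base change of a scheme $X$ furnished by the definition of $\tilde G_k(m)$. Nontriviality is verified by pushing $E$ forward along a $K$-representation of $G_K$ and computing Chern classes of the resulting vector bundle. Throughout, I use that $X_K$ is smooth affine over $k$ of the same dimension as $X$ (since $K/k$ is finite separable), and that $G_K$-torsors on $X_K$ viewed as a $K$-scheme coincide with $G$-torsors on $X_K$ viewed as a $k$-scheme.

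First, I would fix $K/k$ finite separable splitting $T$, and in the central-torus case also splitting $G^{\mathrm{ab}}$. I would then choose $\lambda$ as above, pick a faithful representation $\rho:G\hookrightarrow\uGL_n$, and let $a=(a_1,\dots,a_n)\in\Z^n$ be the character vector of $\rho_K\circ\lambda$. Since $\rho_K\circ\lambda$ is injective, $a\neq 0$. The identity $2e_2(a)=e_1(a)^2-\sum a_i^2$ combined with $\sum a_i^2>0$ forces $e_1(a)\neq 0$ or $e_2(a)\neq 0$; I would pick $m\in\{1,2\}$ with $e_m(a)\neq 0$. In the central case I would do better by replacing $\rho$ with a single $K$-character $\chi:G_K\to\nGm{K}$ satisfying $\chi\circ\lambda\neq 0$: such a $\chi$ exists because the image of $\lambda(\nGm{K})$ in $G_K^{\mathrm{ab}}$ is a nontrivial torus (passing to the reductive quotient $G_K/R_u(G_K)$ reduces this to the standard fact that a central torus of a reductive group meets its derived subgroup in a finite group), and the choice of $K$ ensures $G_K^{\mathrm{ab}}$ has enough $K$-characters to detect it; this yields $m=1$.

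Next, I would set $d=[K:k]$ and apply the definition of $\tilde G_k(m)$ with integer parameter $d|e_m(a)|$ to obtain a smooth affine $k$-scheme $X$ of dimension $\tilde G_k(m)$ and a line bundle $L$ on $X$ with $d|e_m(a)|\cdot c_1(L)^m\neq 0$ in $\CH^m X$. Letting $p:X_K\to X$ be the base change, $L_K=p^*L$, and $M\to X_K$ the associated $\nGm{K}$-torsor, I would form $E$ as above and push it forward via $\rho_K$ (respectively $\chi$) to the vector bundle $V=\bigoplus_iL_K^{\otimes a_i}$; its Chern classes are $c_j(V)=e_j(a)c_1(L_K)^j$. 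The projection formula $p_*p^*=d\cdot\mathrm{id}$ on $\CH^*X$ yields $\ker(p^*)\subseteq\CH^m(X)[d]$; since $c_1(L)^m$ has order exceeding $d|e_m(a)|$, the class $e_m(a)c_1(L)^m$ is not $d$-torsion, so $c_m(V)=p^*(e_m(a)c_1(L)^m)\neq 0$. Hence $V$ is nontrivial, so $E$ is nontrivial as a $G_K$-torsor, and hence as a $G$-torsor on the smooth affine $k$-scheme $X_K$ of dimension $\tilde G_k(m)$, which is at most $\tilde G_k(2)$ in general and at most $\tilde G_k(1)$ in the central case.

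The main obstacle, I expect, is the construction of $\chi$ in the central case: it requires careful use of the structure theory of affine algebraic groups (passing to the reductive quotient, exploiting that a central torus of a reductive group meets the derived subgroup in a finite group, and enlarging $K$ so that $G^{\mathrm{ab}}$ splits and produces enough $K$-characters). The Chern-class calculation, the control of $\ker(p^*)$ via the projection formula, and the identification of $G_K$-torsors with $G$-torsors upon finite separable base change are all routine by comparison.
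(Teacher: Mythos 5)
Your treatment of the non-central case is essentially the paper's proof: the same symmetric-function observation (for $a\neq 0$ either $\sigma_1(a)\neq 0$ or $\sigma_2(a)\neq 0$), the same computation $c_j(V)=\sigma_j(a)\,c_1(L)^j$ for $V=\bigoplus_i L^{\otimes a_i}$, and the same reduction of nontriviality of $E=M\times^{\nGm{K}}G_K$ to nonvanishing of a Chern class of the pushed-forward $\uGL_n$-torsor. Your way of handling the passage to the splitting field --- base-changing the scheme, using $p_*p^*=d\cdot\mathrm{id}$ to get $\ker p^*\subseteq \CH^m(X)[d]$, and feeding $d|\sigma_m(a)|$ into the definition of $\tilde{G}_k(m)$ --- is a correct and in fact slightly more explicit rendering of the paper's ``it is harmless to base-change to a finite separable extension splitting $T$''.

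The gap is in the central case. You need a character $\chi\colon G_K\to\nGm{K}$ with $\chi\circ\lambda$ nontrivial, and you propose to produce it by passing to the reductive quotient $G_K/\rad_u(G_K)$ and using that a central torus of a reductive group meets the derived group in a finite group. But $G$ here is an arbitrary affine algebraic group: it may be non-smooth and non-connected, and $K$ (being finite separable over $k$) may be imperfect, so the unipotent radical over $K$ need not be well-behaved (the quotient by the $K$-unipotent radical is only pseudo-reductive in general, and for non-smooth $G$ the reduction is not even available in the form you use); moreover the argument as sketched only yields a character of the identity component, which need not extend to $G_K$, and the separable rationality of the character is not addressed. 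The character you want does in fact exist, but the efficient route --- and what the paper actually does --- avoids characters of $G$ altogether: take a faithful representation $V$ of $G$, decompose it into eigenspaces for the central copy of $\uGm$ (each eigenspace is $G$-stable precisely because $\uGm$ is central), and choose an eigenspace $V_\chi$ with $\chi$ nontrivial (one exists by faithfulness); then $G\to\uGL(V_\chi)$ sends $\uGm$ into the scalars, so all the $a_i$ are equal and nonzero, $\sigma_1(a)=na\neq 0$, and you may take $m=1$ with no structure theory needed. (If you really want a character of $G_K$, composing this representation with the determinant gives one whose restriction to $\uGm$ is $x\mapsto x^{a\dim V_\chi}$.) With that substitution your argument goes through in the stated generality.
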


\begin{proof}[Proof of Theorem~\ref{TH:non-triv-torsors-main} using Theorem~\ref{TH:there-are-better-nontriv-torsors}]
	It is enough to show that $\tilde{G}_k(m)\leq m+1$, $\tilde{G}_k(1)\leq 1$ and,
	provided that $\Char k=0$ and $\mathrm{trdeg}_{\Q} k$ is infinite,
	$\tilde{G}_k(m)\leq m$. 
	
	For the first inequality, given $n\in\N$, choose some prime number $\ell$
	larger than $m$ and $n$. By Theorem~\ref{TH:Gk-all-bounds}(iii),
	there is a smooth affine $k$-scheme $X$ with $\dim X\leq m+1$
	and an $\ell$-torsion line bundle $L$ such that $c_1(L)^m\neq 0$.
	Since $c_1(L)$ is $\ell$-torsion and $\ell\nmid n$, we must have $n\cdot c_1(L)^m\neq 0$.
	Thus, $\tilde{G}_k(m)\leq m+1$.
	
	The second inequality is shown similarly, using Theorem~\ref{TH:Gk-all-bounds}(i).
	
	For the last inequality, write $K=\algcl{k}$. We may assume that $m>1$ as we already
	showed that $\tilde{G}_k(1)\leq 1$.
	By Theorem~\ref{TH:Gk-all-bounds}(v), there is a smooth affine $K$-scheme
	$X$ with $\dim X=m$ and a line bundle $L$ such that $c_1(L)^m\neq 0$.
	By a result of Murthy    \cite[Thms.~2.11, 2.14]{Murthy_1994_zero_cycles_and_proj_modules}, 
	$\CH^m X$ is torsion-free (here we need $m>1$ and $K$ to be algebraically closed).
	Thus, $n\cdot c_1(L)^m\neq 0$ for all $n\in\N$. The $K$-scheme $X$ and the line bundle $L$
	descend to some finite-dimensional $k$-field, which is separable over $k$ because
	$\Char k=0$, hence $\tilde{G}_k(m)\leq m$.
\end{proof}

We turn to prove Theorem~\ref{TH:there-are-better-nontriv-torsors}.
We first prove the following lemma.

\begin{lem}\label{LM:sym-func}
	Let $\sigma_1,\sigma_2:\Z^n\to \Z$ be the first and second elementary symmetric functions.
	Then for every $x\in \Z^n-\{0\}$, either $\sigma_1(x)\neq 0$  or $\sigma_2(x)\neq 0$.
\end{lem}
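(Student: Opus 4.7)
The plan is to use the Newton-type identity relating power sums to elementary symmetric polynomials. Specifically, for any $x = (x_1, \dots, x_n) \in \Z^n$, one has
\[
\sigma_1(x)^2 \;=\; \Bigl(\sum_{i} x_i\Bigr)^{\!2} \;=\; \sum_i x_i^2 + 2\sigma_2(x),
\]
hence
\[
\sum_{i=1}^n x_i^2 \;=\; \sigma_1(x)^2 - 2\sigma_2(x).
\]
Now I would argue by contradiction: suppose $\sigma_1(x) = 0$ and $\sigma_2(x) = 0$. Then the displayed identity forces $\sum_i x_i^2 = 0$. Since the $x_i$ are integers (and in particular real), this sum of squares vanishes only when every $x_i = 0$, contradicting $x \neq 0$.

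There is no real obstacle here; the entire content is the Newton identity $p_2 = e_1^2 - 2e_2$ together with positivity of squares in $\R$. The proof is a two-line calculation, and I do not expect it to require more than the display above and a one-sentence conclusion.
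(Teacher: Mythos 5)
Your proof is correct and is essentially the paper's own argument: the paper likewise uses the identity $\sigma_1(x)^2-2\sigma_2(x)=\sum_i x_i^2$ and notes that this sum of squares is nonzero for $x\neq 0$, forcing one of $\sigma_1(x),\sigma_2(x)$ to be nonzero. Nothing further is needed.
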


\begin{proof}
	Write $x=(x_1,\dots,x_n)$.
	We have
	$\sigma_1(x)^2-2\sigma_2(x) = \sigma_1(x_1^2,\dots,x_n^2) \neq   0  $,
	so at least one of $\sigma_1(x)$, $\sigma_2(x) $ must be nonzero.
\end{proof}

\begin{proof}[Proof of Theorem~\ref{TH:there-are-better-nontriv-torsors}]
	Let $T$ be a nontrivial torus contained in $G$ (resp.\ the center of $G$).
	It is harmless to base-change from $k$ to a finite separable extension splitting $T$.
	We may therefore assume that $G$ (resp.\ its center) contains a copy of $\uGm$.
	
	Choose a homomorphism $G\to\uGL_n$ such that the composition $\vphi:\uGm\to G\to \uGL_n$
	is nontrivial, e.g., a faithful representation of $G$.
	It is enough to show that there is a smooth affine $k$-scheme
	$X$ with $\dim X\leq \tilde{G}_k(2)$ (resp.\ 
	$\dim X\leq \tilde{G}_k(1)$) 
	and a $\uGm$-torsor $E$ such that $E\times^{\uGm}\uGL_n$ is a nontrivial $\uGL_n$-torsor.
	Indeed, since $E\times^{\uGm}\uGL_n = (E\times^{\uGm} G)\times^G \uGL_n$,
	this would force $E\times^{\uGm} G\to X$ to be a nontrivial $G$-torsor.
	
	Let $\{x\mapsto x^{a_i}\}_{i=1}^n$ be the characters of the representation
	$\vphi:\uGm\to \uGL_n$ (with multiplicities). We observed
	in the proof of Lemma~\ref{LM:pre-reduction}
	that if $L$ is the line bundle corresponding to $E$,
	then the vector bundle  corresponding to $E\times^{\uGm}\uGL_n$
	is $V:=L^{\otimes a_1}\oplus\dots\oplus L^{\otimes a_n}$.
	Thus,   the total
	Chern class of $V$ is $c(V)=c (L^{\otimes a_1})\cdots c(L^{\otimes a_n})=\prod_{i=1}^n (1+a_i c_1(L))$.
	Writing $a=(a_1,\dots,a_n)\in\Z^n$, this means that
	\[
	c_i(V)=\sigma_i(a)c_1(L)^i
	\]
	for all $i\in\{1,\dots,n\}$, where $\sigma_i$ is the $i$-th elementary symmetric function.
	Since $a\neq 0$, Lemma~\ref{LM:sym-func} tells us that $\sigma_m(a)\neq 0$
	for some $m\in\{1,2\}$. 
	
	We now choose  $X$ to be a $\tilde{G}_k(m)$-dimensional
	smooth affine $k$-scheme carrying a line bundle $L$ with $\sigma_m(a)\cdot c_1(L)^m\neq 0$.
	By what we just showed, 
	$c_m(V)=\sigma_m(a)\cdot c_1(L)^m \neq 0$ and it follows 
	that $E\times^{\uGm} \uGL_n\to X$ is a nontrivial $\uGL_n$-torsor.
	As $\tilde{G}_k(m)\leq \tilde{G}_k(2)$ (because $m\in\{1,2\}$), this proves the theorem
	when $T$ is not assumed to be central in $G$.
	
	When $T$ is contained in the center of $G$,
	we choose $\vphi:G\to \uGL_n$   in such a way that $\uGm$ is also mapped  
	into the center of $\uGL_n$. (Such   representations exist, e.g., start with a faithful
	representation $V$ of $G$, chose a nontrivial character $\chi:\uGm\to \uGm$
	occurring in $\uGm\to G\to\uGL(V)$  and take the eigenspace   $V_\chi$.)
	Then there is $a\in \Z-\{0\}$ such that $a_1=\dots=a_n=a$.
	This means that $\sigma_1(a)=na\neq 0$,   so we can take $m=1$ and get 
	$\dim X\leq \tilde{G}_k(1)$.
\end{proof}

We finish with showing that there 
are algebraic groups for which Theorem~\ref{TH:non-triv-torsors-main}
is the best possible result in terms of dimension.

\begin{example}\label{EX:nontrivial-torsor-lower-bound}
	Consider the algebraic $k$-group $\uSL_n$ ($n>1$). By 
	Theorem~\ref{TH:non-triv-torsors-main},   there is a smooth
	affine $k$-scheme $X$ with $\dim  X\leq 3$
	carrying a nontrivial $\uSL_n$-torsor. Provided that $\Char k=0$
	and $\mathrm{trdeg}_{\Q}k$ is infinite, we can even take $X$ with $\dim X=2$.

	On the other hand, every $\uSL_n$-torsor over an affine noetherian scheme
	of dimension smaller than $2$ is trivial.
	Indeed, it is well-known that there is a bijection between
	isomorphism classes of
	$\uSL_n$-torsors over $X$ and rank-$n$ vector bundles   $X$ 
	with trivial determinant,
	in which the trivial torsor corresponds to $\calO_X^n$.
	(This can be seen, for instance, by inspecting the long fppf cohomology
	exact sequence associated to the short exact sequence
	$\uSL_{n}\hookrightarrow \uGL_{n}\xonto{\det}\uGm$.)
	Let $E$ be an $\uSL_n$-torsor over    a   noetherian affine
	$k$-scheme $X$ with $\dim X<2$  and let $V$ be the corresponding
	vector bundle with $\det(V) \cong \calO_X$.
	By a theorem of Bass--Serre \cite[Thm.~8.2]{Bass_1964_K_theory_and_stable_algs}
	(see also \cite[Thm.~1]{Serre_1957_modules_projectifs}), our assumptions on $X$
	imply that   $V\cong \calO^{n-1}_X\oplus L$
	for some line bundle $L$, so $\det(V)\cong L$.
	Since we also have $\det(V)\cong\calO_X$,
	it  follows that  $V\cong \calO_X^n$  and  $E\to X$ 
	is trivial.
\end{example}

For algebraically closed fields, we can make Example~\ref{EX:nontrivial-torsor-lower-bound}
more precise  and even extend it to semisimple groups of type $G_2$. The latter
case is essentially due to Asok, Hoyois and Wendt \cite{Asok_2019_octonion_algebras_A1_homotopy}.

\begin{prp}\label{PR:triviality-of-tors-dim-2}
	Let $k$ be an algebraically closed field, let $X$ be a smooth irreducible affine
	$k$-scheme with $\dim X<3$, let $n>1$ and let
	$G$ denote the semisimple algebraic $k$-group of type $G_2$. Then the following are equivalent:
	\begin{enumerate}[label=(\alph*)]
		\item $\CH^2 X=0$;
		\item every $\uSL_n$-torsor over $X$ is trivial;
		\item every $G$-torsor over $X$ is trivial.
	\end{enumerate}
\end{prp}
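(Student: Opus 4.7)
The plan is to reduce to the case $\dim X = 2$ and then combine Murthy's classification of rank-$2$ vector bundles on smooth affine surfaces with the classification of $G_2$-torsors due to Asok, Hoyois, and Wendt. The case $\dim X \leq 1$ is easier and handled separately: $\CH^2 X = 0$ for dimension reasons; every $\uSL_n$-torsor is trivial by the Bass--Serre cancellation argument of Example~\ref{EX:nontrivial-torsor-lower-bound}; and every $G_2$-torsor is trivial by the Asok--Hoyois--Wendt classification (or alternatively by a spreading-out argument combined with Steinberg's theorem, using that the function field of $X$ has cohomological dimension $\leq 1$). From now on we assume $\dim X = 2$.

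For (a)$\Leftrightarrow$(b), recall that an $\uSL_n$-torsor $E\to X$ corresponds to a rank-$n$ vector bundle $V$ with $\det V\cong \calO_X$, the trivial torsor corresponding to $\calO_X^n$. By the Bass--Serre--Suslin cancellation theorem on smooth affine surfaces, we may write $V\cong V_0\oplus\calO_X^{n-2}$ for some rank-$2$ bundle $V_0$ with $\det V_0\cong \calO_X$, and then $V\cong\calO_X^n$ iff $V_0\cong\calO_X^2$. Appeal to Murthy's theorems \cite{Murthy_1994_zero_cycles_and_proj_modules}: on a smooth affine surface over an algebraically closed field, isomorphism classes of rank-$2$ bundles with trivial determinant are classified by their second Chern class in $\CH^2 X$, with the trivial bundle corresponding to $0$, and every class in $\CH^2 X$ is realized by some such $V_0$. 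Combining these observations, $\uSL_n$-torsors on $X$ are parametrized by $\CH^2 X$, and all of them are trivial iff $\CH^2 X = 0$.

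For (a)$\Leftrightarrow$(c), we invoke the main classification theorem of \cite{Asok_2019_octonion_algebras_A1_homotopy}: over a smooth affine scheme of dimension $\leq 2$ over an algebraically closed field, the pointed set $\HH^1(X, G)$ of isomorphism classes of $G_2$-torsors (equivalently, of octonion algebras over $X$) is in natural bijection with $\CH^2 X$, with the trivial torsor corresponding to $0$. Hence all $G_2$-torsors over $X$ are trivial iff $\CH^2 X = 0$.

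The main subtlety lies in invoking Murthy's theorem in the precise form required, in particular the surjectivity of $V_0 \mapsto c_2(V_0)$. This rests on the fact that every closed point of a smooth affine surface over an algebraically closed field is a complete intersection, so its class in $\CH^2 X$ is the Euler class of a rank-$2$ bundle with trivial determinant; Murthy's results then give that classes of closed points generate $\CH^2 X$ and that $c_2$ is a complete invariant of rank-$2$ bundles with trivial determinant. Once both the Murthy and Asok--Hoyois--Wendt classifications are in hand, each of (a), (b), (c) reduces to the single statement $\CH^2 X = 0$, which proves the proposition.
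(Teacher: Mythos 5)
Your overall route is the same as the paper's: split off a rank-$2$ summand via Bass--Serre, use Murthy's theorem for $\CH^2X=0\Rightarrow$ triviality of $\uSL_n$-torsors, realize every class of $\CH^2X$ as $c_2$ of a rank-$2$ bundle with trivial determinant for the converse, and quote Asok--Hoyois--Wendt for the $G_2$ side. The genuine problem is your justification of the surjectivity step in (b)$\Rightarrow$(a). You assert that every closed point of a smooth affine surface over an algebraically closed field is a complete intersection. That is false in general: it is a theorem of Mohan Kumar over $\quo{\F}_p$ \cite{Mohan_1988_cancellation_proj_modules}, over $\algcl{\Q}$ it is essentially the Bloch--Beilinson conjecture (cf.\ Remark~\ref{RM:Bloch-Beilinson}), and over $\C$ it fails, since there are smooth affine surfaces with $\CH^2\neq 0$ (e.g.\ a dense affine open in a product of two elliptic curves, as in Proposition~\ref{PR:Gk-zero}), whereas a point cut out globally by two equations is the zero scheme of a section of $\calO_X^2$ and hence has class $c_2(\calO_X^2)=0$. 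For the same reason the claim would defeat your own argument: if it held, $\CH^2X$ would always vanish and the proposition would be vacuous. What you actually need is that every class in $\CH^2X$ is $c_2$ of a rank-$2$ bundle with trivial determinant; this is true, but the correct mechanism is the Serre construction applied to a finite reduced \emph{local} complete intersection $Z$ representing the class (an extension $0\to\calO_X\to V'\to I_Z\to 0$ with $V'$ locally free of rank $2$, $c_1(V')=0$, $c_2(V')=[Z]$), which is exactly what the paper cites from \cite[\S1, a)]{Kumar_1982_alg_cycles_over_affine_3folds}. Note also that the full ``$c_2$ is a complete invariant of rank-$2$ bundles with trivial determinant'' statement you attribute to Murthy is more than is needed (and more than \cite[Thm.~3.8]{Murthy_1994_zero_cycles_and_proj_modules} says): that theorem gives $c_2(V')=0\Rightarrow V'\cong L\oplus\calO_X$, and the determinant argument then finishes (a)$\Rightarrow$(b).

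A smaller point concerns (a)$\Leftrightarrow$(c): your phrasing of the Asok--Hoyois--Wendt theorem treats it as a classification of \emph{all} $G_2$-torsors, but their classification concerns generically split octonion algebras. One must therefore first observe that every $G$-torsor over $X$ is trivial at the generic point, which follows because $k(X)$ has cohomological dimension at most $2$ and Serre's Conjecture II is known for type $G_2$ \cite[Apx.~2.3.3]{Serre_2002_Galois_cohomology_english}; this is the one-line supplement the paper includes before invoking \cite{Asok_2019_octonion_algebras_A1_homotopy}, and you should add it.
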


\begin{proof}
	We start with showing that (a)$\iff$(c).
	The field $k(X)$ has cohomological dimension at most $2$,
	and thus every $G$-torsor over $X$ is trivial over the generic point 
	\cite[Apx.~2.3.3]{Serre_2002_Galois_cohomology_english}.	
	Thus, by \cite[Thm.~1]{Asok_2019_octonion_algebras_A1_homotopy},
	$\CH^2 X$ is in bijection with $\HH^1_{\fppf}(X,G)$  and the equivalence is immediate.
	
	We proceed with showing that (a)$\iff$(b).
	Suppose first that (a) holds.
	Let $E$ be an $\uSL_n$-torsor over $X$ and let $V$ be its corresponding
	vector bundle  as in Example~\ref{EX:nontrivial-torsor-lower-bound}.
	We have seen
	that $E\to X$ is trivial when $\dim X<2$, so assume $\dim X=2$.
	By the Bass--Serre Theorem \cite[Thm.~1]{Serre_1957_modules_projectifs},
	$V\cong V'\oplus \calO_X^{n-2}$ for some rank-$2$ vector bundle $V'$.
	Since $\CH^2 X=0$, we have $c_2(V')=0$.
	By a theorem of Murthy \cite[Thm.~3.8]{Murthy_1994_zero_cycles_and_proj_modules},
	this means that there is a line bundle $L$ such that $V'\cong L\oplus \calO_X$
	and as in Example~\ref{EX:nontrivial-torsor-lower-bound}, we get that $E\to X$ is trivial.
	
	Suppose now that (b) holds.
	If $\dim X<2$, then $\CH^2 X=0$, so
	assume  $\dim X=2$. Let $c\in \CH^2 X$.
	As explained in \cite[\S1, a)]{Kumar_1982_alg_cycles_over_affine_3folds},
	there is a rank-$2$ vector bundle $V'$ with $c_1(V')=0$ and $c_2(V')=c$.
	Since $c_1(V')=c_1(\det(V'))=0$, it follows that $\det(V')\cong \calO_X$.
	Let $E\to X$ be the $\uSL_n$-torsor corresponding to $V:=V'\oplus \calO_X^{n-2}$.
	By (c), $E$ is trivial, so $V$ is free,
	and it follows that $c=c_2(V')=c_2(V)= 0$.
\end{proof}

\begin{cor}\label{CR:triviality-of-tors-dim-2}
	Let $p$ be a prime number and let
	$G$ be a semisimple simply connected algebraic  $\quo{\F}_p$-group of
	type $A_n$ or $G_2$.
	Then every $G$-torsor over a smooth affine $\quo{\F}_p$-scheme
	$X$ of dimension $<3$ is trivial.
\end{cor}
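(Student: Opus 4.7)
The plan is to combine Proposition~\ref{PR:triviality-of-tors-dim-2} with the vanishing of $\CH^2 X$ that was extracted in the proof of Proposition~\ref{PR:Gk-pos-char}, since the base field $\quo{\F}_p$ is algebraically closed. First I would reduce to the case where $X$ is irreducible: because $X$ is smooth, each of its connected components is irreducible, and a $G$-torsor is trivial if and only if its restriction to every connected component is trivial. Next I would observe that a simply connected $k$-algebraic group of type $A_n$ is $\uSL_{n+1}$ with $n+1\geq 2$, so the assertion on $G$-torsors falls under condition (b) of Proposition~\ref{PR:triviality-of-tors-dim-2} when $G$ is of type $A_n$, and under condition (c) when $G$ is of type $G_2$.

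By Proposition~\ref{PR:triviality-of-tors-dim-2}, it therefore suffices to verify condition (a), namely $\CH^2 X=0$, for any smooth irreducible affine $\quo{\F}_p$-scheme $X$ of dimension at most $2$. If $\dim X\leq 1$, this is immediate for dimension reasons. If $\dim X=2$, then $\CH^2 X=0$ is precisely the statement already established in the proof of Proposition~\ref{PR:Gk-pos-char}: writing $X=\Spec A$, Mohan Kumar's theorem \cite[Thm.~3.6]{Mohan_1988_cancellation_proj_modules} shows that every closed point of $X$ is a complete intersection, and Murthy's results \cite[Cor.~3.4, Thm.~2.14ii)]{Murthy_1994_zero_cycles_and_proj_modules} then identify $\CH^2 X$ with $F^2 K_0(A)=0$.

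No genuinely new obstacle arises here --- the corollary is just an assembly of Propositions~\ref{PR:triviality-of-tors-dim-2} and~\ref{PR:Gk-pos-char}; all the substantial input (the Asok--Hoyois--Wendt identification $\HH^1_\fppf(X,G)\cong \CH^2 X$ for $G$ of type $G_2$, Murthy's zero-cycle criterion for splitting off a trivial summand, and the Mohan Kumar--Murthy vanishing $\CH^2 X=0$ over $\quo{\F}_p$) was already marshalled in the preceding two results. The only point that needs a brief check is that both hypotheses of Proposition~\ref{PR:triviality-of-tors-dim-2} (algebraically closed base, irreducibility of $X$) are satisfied, which is handled by the reduction above.
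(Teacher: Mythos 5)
Your proposal is correct and follows the paper's own route: the paper likewise deduces the corollary by noting that $\CH^2 X=0$ as observed in the proof of Proposition~\ref{PR:Gk-pos-char} and then invoking Proposition~\ref{PR:triviality-of-tors-dim-2}. The extra details you supply (reduction to irreducible components, identifying type $A_n$ simply connected groups with $\uSL_{n+1}$, and the trivial case $\dim X\leq 1$) are routine checks the paper leaves implicit.
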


\begin{proof}
	We observed in the proof of Proposition~\ref{PR:Gk-pos-char}
	that $\CH^2 X=0$, so this follows from Proposition~\ref{PR:triviality-of-tors-dim-2}.
\end{proof}

\begin{remark}\label{RM:triviality-of-tors-dim-2}
	Let $G$ be a semisimple simply connected  algebraic group over $\algcl{\Q}$ of
	type $A_n$ or $G_2$. By Proposition~\ref{PR:triviality-of-tors-dim-2},
	the statement that every  $G$-torsor  over a smooth affine $2$-dimensional
	$\algcl{\Q}$-scheme is trivial 
	is equivalent to 
	the Bloch--Beilinson conjecture  for surfaces  \cite[p.~157, (b)]{Krishna_2002_zero_cycles_K_thy_normal_surf}.
\end{remark}

\bibliographystyle{plain}
\bibliography{bib_versal}

\end{document}